\theoremstyle{plain}
\newtheorem{thm}{Theorem}[section]
\newtheorem{prop}[thm]{Proposition}
\newtheorem*{prop*}{Proposition}
\newtheorem{lem}[thm]{Lemma}
\newtheorem*{lem*}{Lemma}
\newtheorem{cor}[thm]{Corollary}
\newtheorem*{cor*}{Corollary}
\newtheorem{example}[thm]{Example}
\newtheorem*{example*}{Example}
\newtheorem*{conject*}{Conjecture}
\theoremstyle{definition}
\newtheorem{defn}[thm]{Definition}
\newtheorem*{defn*}{Definition}
\newtheorem{rem}[thm]{Remark}
\newtheorem*{rem*}{Remark}
\newcommand{\eps}{\varepsilon}
\DeclareMathOperator{\ind}{ind}
\DeclareMathOperator{\new}{new}
\newcommand{\abs}[1]{\vert #1 \vert}
\newcommand{\disk}{D}
\newcommand{\cdisk}{\overline{D}}
\newcommand{\annu}{A}
\newcommand{\cannu}{\overline{A}}
\newcommand{\R}{\ensuremath{\mathbb{R}}}
\newcommand{\C}{\ensuremath{\mathbb{C}}}
\newcommand{\Z}{\ensuremath{\mathbb{Z}}}
\newcommand{\coloneq}{\mathrel{\mathop:}=}
\newcommand{\conj}[1]{\overline{#1}}
\title{Perturbing rational harmonic functions by poles}
\author{Olivier S\`{e}te, Robert Luce, J\"{o}rg Liesen}
\begin{document}
\maketitle

\begin{abstract}
We study how adding certain poles to rational harmonic functions of the form
$R(z)-\conj{z}$, with $R(z)$ rational and of degree $d\geq 2$, affects the
number of zeros of the resulting functions. Our results are motivated
by and generalize a construction of Rhie derived in the context
of gravitational microlensing (arXiv:astro-ph/0305166). Of particular
interest is the construction and the behavior of rational functions $R(z)$
that are {\em extremal} in the sense that $R(z)-\conj{z}$ has the maximal
possible number of $5(d-1)$ zeros.

\end{abstract}

\section{Introduction}

This work is concerned with the zeros of functions in the complex variable 
$z$ of the form $R(z)-\conj{z}$,
where $R(z)$ is a rational function. The analysis of such rational harmonic
functions has received considerable attention in recent years. As nicely
explained in the expository article of Khavinson and
Neumann~\cite{KhavinsonNeumann2008},
they have important applications in gravitational microlensing; see also the
survey~\cite{PettersWerner2010}. In addition they are related to the
matrix theory problem of expressing certain adjoints of a diagonalizable
matrix as a rational function in the matrix~\cite{Liesen2007}.

In the sequel, whenever we write a rational function $R(z) = \tfrac{p(z)}{q(z)}$,
we assume that the polynomials $p(z)$ and $q(z)$ are coprime, i.e., that they
have no common zero. Then the \emph{degree} of $R(z)$, denoted by $\deg(R)$, is
defined as the maximum of the degrees of $p(z)$ and $q(z)$.

It is easy to see that $R(z)-\conj{z}$ has exactly one
zero if $\deg(R)=0$, and either $0,1,2$ or infinitely many zeros (given by all
points of a line or a circle in the complex plane) if $\deg(R)=1$.
More interesting is the case $\deg(R)=d\geq 2$, which we consider in this paper.
An important result of Khavinson and Neumann states that in this case
$R(z) - \conj{z}$ may have at most $5(d-1)$ zeros~\cite{KhavinsonNeumann:2006}.
Prior to the work of Khavinson and Neumann the astrophysicist
Sun Hong Rhie (1955--2013) had constructed, in the context of gravitational
lensing, examples of functions $R(z)-\conj{z}$ with exactly $5(d-1)$ zeros
for every $d\geq 2$~\cite{Rhie2003}. Hence the bound of Khavinson and Neumann
is sharp for every $d\geq 2$.

Motivated by the result of Khavinson and Neumann we call a rational function
$R(z)$ of degree $d\geq 2$ \emph{extremal}, when the function $R(z)-\conj{z}$
has the maximal possible number of $5(d-1)$ zeros.
Examples of such extremal rational functions in the published literature are rare.
For $d=2$ an example is given in \cite{KhavinsonNeumann:2006}.
The only other published example we are aware of is given by the construction
of Rhie~\cite{Rhie2003} and in the closely related
works~\cite{BayerDyer2007,BayerDyer2009}; see
also~\cite{LuceSeteLiesen2013}.
Rhie's construction served as a motivation for our work, and some of our results
can be considered a generalization of her original idea.

To briefly explain this idea, consider the rational harmonic function
\begin{equation*}
R_0(z)-\conj{z},\quad\text{where}\quad R_0(z) = \tfrac{z^{d-1}}{z^d - r^d}
\end{equation*}
and $r>0$ is a real parameter. For $d = 2$ a straightforward
computation shows that $R_0(z)-\conj{z}$ has $5$ zeros if $r < 1$, and
fewer zeros for $r\geq 1$.  Moreover, it was shown
in~\cite{MaoPettersWitt1999} that for $d\geq 3$ the function
$R_0(z)-\conj{z}$ has $3d+1$ zeros if $r <\big( \tfrac{d-2}{d}
\big)^{\frac{1}{2}} \big( \tfrac{2}{d-2}\big)^{\frac{1}{d}}$, and
fewer zeros for larger values of $r$. Thus, $R_0(z)$ is extremal only
for $d=2$ or $d=3$ (and when $r$ is small enough).  Rhie suggested
in~\cite{Rhie2003} to perturb $R_0(z)-\conj{z}$ by adding a pole at
one of its zeros, namely at the point $z=0$. More precisely, she
showed that for a particular value $r>0$ and sufficiently small
$\eps>0$ the rational harmonic function
\begin{equation}
\label{eqn:R_eps}
R_\eps(z)-\conj{z},\quad\text{where}\quad
R_\eps(z) = (1 - \eps) R_0(z) + \tfrac{\eps}{z},
\end{equation}
has $5d$ zeros, so that $R_\eps(z)$ (of degree $d+1$) is indeed extremal.
An elementary proof and sharp bounds on the parameters $r$ and $\eps$ that guarantee extremality of
$R_\eps(z)$ are given in~\cite{LuceSeteLiesen2013}.

\begin{figure}[t]
\subfigure[Phase portrait of $R_0(z) - \conj{z}$]{%
\label{fig:circlens1}%
\includegraphics[width=.49\textwidth]{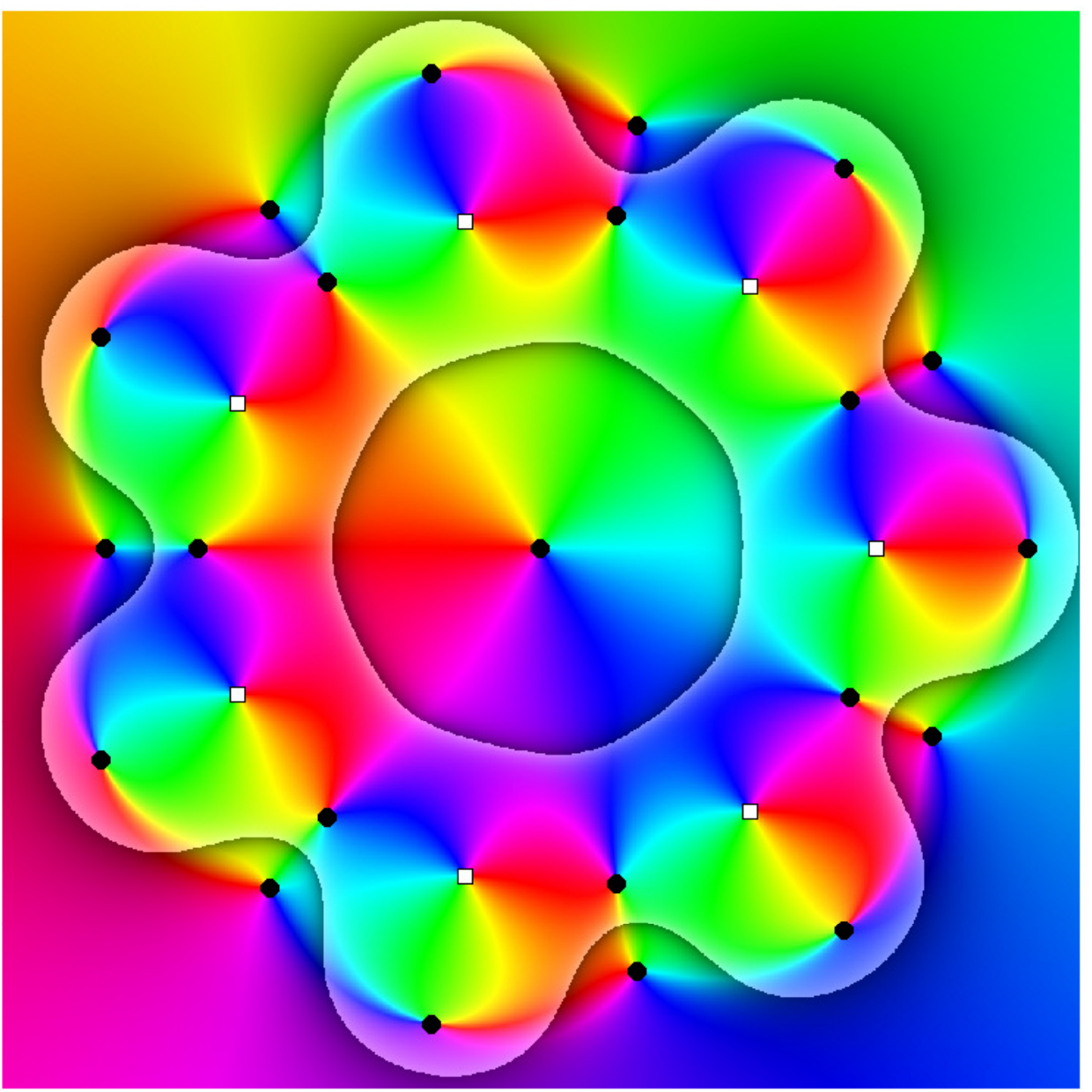}
}
\hfill
\subfigure[Phase portrait of $R_\eps(z) - \conj{z}$]{%
\label{fig:circlens2}%
\includegraphics[width=.49\textwidth]{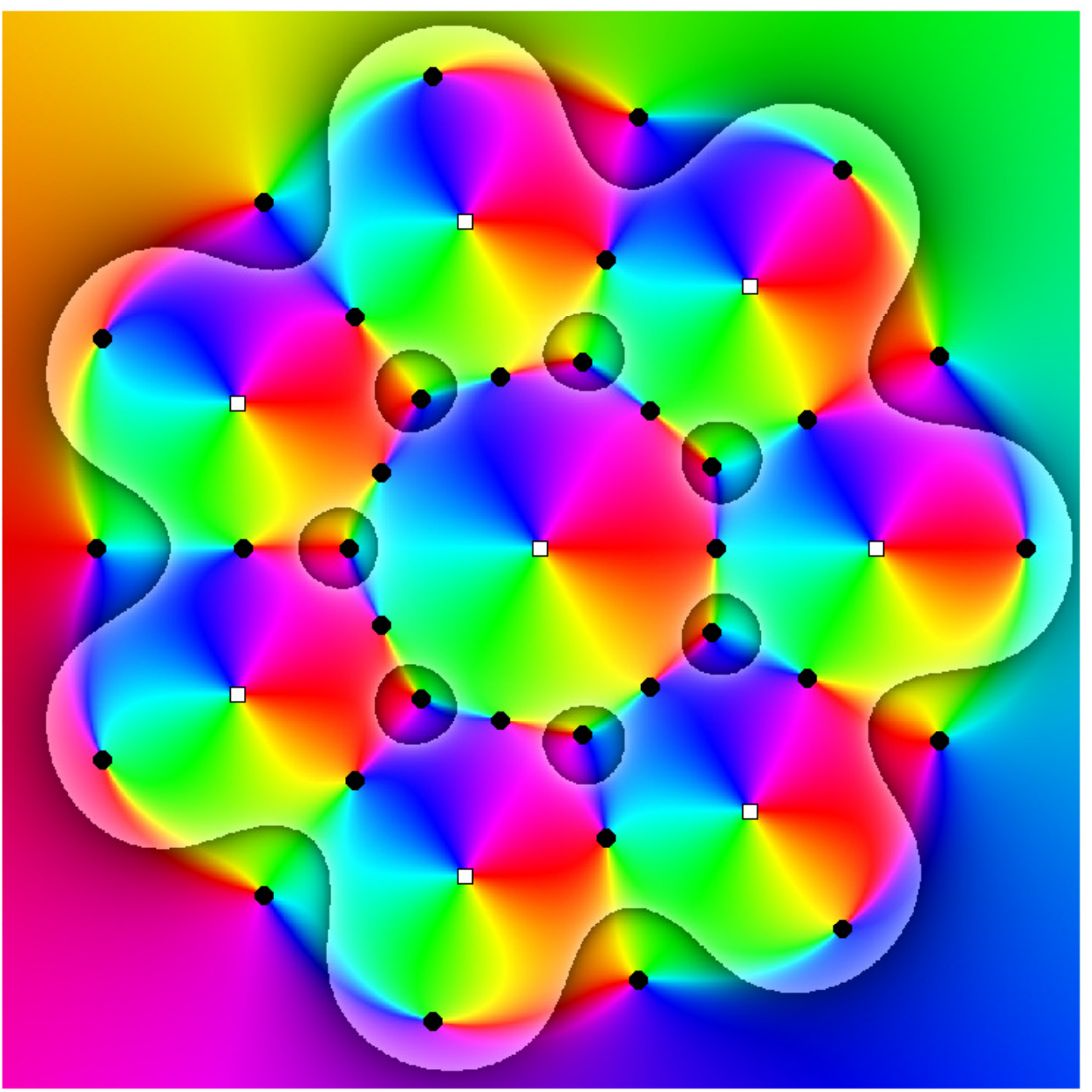} }
\caption{Phase portraits of $R_0(z) - \conj{z}$ and $R_\eps(z) - \conj{z}$.
Black disks indicate zeros and white squares show
poles.\label{fig:circlens}}
\end{figure}

A numerical example for Rhie's construction is shown in
Figure~\ref{fig:circlens}, where we use phase portraits for
visualization~\cite{WegertSemmler2011,Wegert2012}; see also
Section~\ref{sec:examples} below. Here $d=\deg(R_0)=7$, and $r>0$ is
chosen sufficiently small. Figure~\ref{fig:circlens1} shows that
$R_0(z) - \conj{z}$ has $3d+1=22$ zeros and $7$ poles. We choose
$\eps=0.15$, which is sufficiently small in order to obtain an
extremal function $R_\eps(z)$, i.e. a function $R_\eps(z)-\conj{z}$
with $5d=35$ zeros; see Figure~\ref{fig:circlens2}.  Two essential
observations are to be made when comparing Figures~\ref{fig:circlens1}
and~\ref{fig:circlens2}:
\begin{itemize}

\item[(1)] The zero of $R_0(z)-\conj{z}$ at the perturbation point
$z=0$ becomes a pole of the perturbed function, but all other zeros of
$R_0(z)-\conj{z}$ ``survive'' the perturbation in the sense that
perturbed versions of them are still zeros of $R_\eps(z)-\conj{z}$.
\item[(2)] In a neighborhood of the perturbation point $z=0$ the
function $R_\eps(z)-\conj{z}$ has $2d$ additional zeros, which are
located on two circles of radius approximately $\sqrt{\eps}$ around
$z=0$. (In Figure~\ref{fig:circlens2} the two circles are
visually almost indistinguishable.)

\end{itemize}

While the function $R_\eps(z)$ considered by Rhie arises from a convex
combination of $R_0(z)$ with the pole $\tfrac{1}{z}$, it was shown
in~\cite{LuceSeteLiesen2013} that qualitatively the same ``zero
creating effect'' can be observed for a purely additive perturbation
with $\frac{1}{z}$, resulting in the function
$R_0(z)+\tfrac{\eps}{z}-\conj{z}$. The analyses of this effect draw
heavily from the rotational symmetry of $R_0(z)$. Hence it is tempting
to regard this effect as a special property of $R_0(z)$.

The main goal of this paper is to prove, however, that \emph{the
effect is generic to any rational harmonic function} $R(z)-\conj{z}$,
provided that it satisfies certain conditions at the perturbation
point.  In more details, we show that if $z_0\in \mathbb{C}$ is a zero
of $R(z)-\conj{z}$ and if $n-1$ is the order of the first
non-vanishing derivative of $R(z)$ at $z_0$, then $R(z)-\conj{z}$ can
be perturbed by a pole at $z_0$ such that the perturbed rational
harmonic function has (at least) $2n$ additional zeros in a
neighborhood of $z_0$, while the other zeros of $R(z)-\conj{z}$
``survive'' the perturbation. The precise statement of this result is
given in Theorem~\ref{thm:pert_new_zeros} below.  In order to fully
characterize this zero creating effect, perturbations with poles at
arbitrary points $z_0 \in \C$ are studied in
Theorem~\ref{thm:other_z0}.

Adding poles to rational harmonic functions of the form $R(z) -
\conj{z}$ in order to create new zeros has been used, for example,
in~\cite{BayerDyer2007,BleherEtAl2014}.  However, a complete
mathematical characterization of this effect as in this work has not
been given before.

We would also like to point out that the conceptually similar problem
of constructing extremal harmonic \emph{polynomials} $p(z) - \conj{z} = 0$
appears to be more challenging.  A class of examples that realizes the
maximal number of zeros $3\deg(p) - 2$ (see~\cite{KhavinsonSwiatek2003}) is 
given by Geyer~\cite{Geyer2008}.  Recent progress concerning Wilmshurst's
conjecture~\cite{Wilmshurst1998} on the maximal number of zeros for
general harmonic polynomials $p(z) - \conj{q(z)}$ has been reported
in~\cite{LeeLerarioLundberg2013}.

The paper is organized as follows. In Section~\ref{sec:background} we review
tools from harmonic function theory that we need in our proofs. The major part of
Section~\ref{sec:perturb} consists of the proof of Theorem~\ref{thm:pert_new_zeros}. 
We first show that sufficiently many zeros are created in a neighborhood of
the 
perturbation point (Section~\ref{sec:near_z0}), and we then show that the effect of 
the perturbation is local in the sense that the remaining zeros ``survive'' the
perturbation (Section~\ref{sec:away_z0}). In Section~\ref{sec:examples} we give 
several numerical illustrations. In Section~\ref{sec:conclusions} we state conclusions 
and some open questions in the context of our result.


\section{Mathematical background}
\label{sec:background}

In this section we review the required mathematical background
including the winding of continuous functions, the Poincar\'e index of
exceptional points and an argument principle that will allow to
``count'' zeros of $R(z) - \conj{z}$ in Section~\ref{sec:perturb}.

The functions of the form $f(z) = R(z) - \conj{z}$ we consider in this paper
are obviously not analytic. They are \emph{harmonic} because for $z=x+iy$ they are
twice continuously differentiable with respect to the real variables $x$ and $y$,
and additionally
$\tfrac{\partial^2 f}{\partial x^2} + \tfrac{\partial^2 f}{\partial y^2} = 0$.
In contrast to \cite{Duren2004} we make no assumption whether a harmonic function 
is (locally) bijective or not. Since $f$ is harmonic, it
has locally a representation of the form $f = h + \conj{g}$, where $h$
and $g$ are analytic functions. Both $h$ and $g$ are unique up to additive
constants; see~\cite{DurenHengartnerLaugesen1996}.

Roughly speaking, a harmonic function $h + \conj{g}$ is called
sense-preserving if the analytic part $h$ is dominant, and sense-reversing if
the co-analytic part $\conj{g}$ dominates.  Since the exact definition
simplifies for the harmonic functions of our interest, $f(z) = R(z) - \conj{z}$,
we give the definition for this case only and refer to~\cite{DurenHengartnerLaugesen1996}
and~\cite{SuffridgeThompson2000} for the general case.

\begin{defn} \label{defn:sense-pres-rev}
Let $f(z) = R(z) - \conj{z}$ and $z_0\in \C$. 
\begin{compactenum}
\item If $\abs{R'(z_0)} > 1$, then $f(z)$ is called \emph{sense-preserving} at $z_0$.
\item If $\abs{R'(z_0)} < 1$, then $f(z)$ is called \emph{sense-reversing} at $z_0$.
\item If $\abs{R'(z_0)} = 1$, then $z_0$ is called a \emph{singular point of $f(z)$}.
\end{compactenum}
In either case, if additionally $f(z_0) = 0$, then $z_0$ is respectively 
called a \emph{sense-preserving}, \emph{sense-reversing} or \emph{singular zero} of $f(z)$.
A zero $z_0$ of $f(z)$ that is not singular is called \emph{regular}.
If all zeros of $f(z)$ are regular, then the functions $f(z)$ and $R(z)$
are called \emph{regular}.
\end{defn}

\medskip

We now turn our attention to the zeros of harmonic functions and their
``multiplicity''.  It is clear that zeros of harmonic functions can,
in general, not be ``factored out''.  For example, each $z_0 \in \R$
is a zero of the harmonic function $z-\conj{z}$, but there exists no
harmonic function $g(z)$ with $z-\conj{z}=(z-z_0)g(z)$ or
$z-\conj{z}=\conj{(z-z_0)}g(z)$.  Even if all the zeros are isolated,
as it is the case for harmonic polynomials of the form $p(z) - \conj{z}$
with $\deg(p) > 1$, such decompositions typically do not exist, as the
number of zeros may exceed the degree of the
polynomial~\cite{Geyer2008,KhavinsonSwiatek2003}.  In order to still
``count'' zeros, we will use the concept  of the Poincar\'e index,
which will be defined below; see Definition~\ref{def:Poincare}.

In order to define the Poincar\'e index, we need
the following definition of the winding of a continuous
function~\cite{Balk1991}; see also \cite[p.~101]{Wegert2012} and
\cite[p.~29]{Sheil-Small2002} (where the winding is called ``degree'').
Let $\Gamma$ be a rectifiable curve with
parametrization $\gamma : [a, b] \to \Gamma$.  Let $f : \Gamma \to \C$
be a continuous function with no zeros on $\Gamma$.  Let $\Theta(z)$
denote a continuous branch of $\arg f(z)$ on $\Gamma$.  The
\emph{winding} (or \emph{rotation}) \emph{of $f(z)$ on the curve
$\Gamma$} is defined as
\begin{equation*}
V(f; \Gamma)
\coloneq \tfrac{1}{2 \pi} ( \Theta(\gamma(b)) - \Theta(\gamma(a)) )
= \tfrac{1}{2 \pi} \Delta_\Gamma \arg f(z).
\end{equation*}
The winding is independent of the choice of the branch of $\arg f(z)$.
The next proposition collects elementary properties of the winding.

\begin{prop}[{see \cite[p.~37]{Balk1991} or~\cite[p.~29]{Sheil-Small2002}}]
\label{prop:properties_winding}
Let $\Gamma$ be a rectifiable curve, and let $f(z)$ and $g(z)$ be continuous
and nonzero functions on $\Gamma$.
\begin{enumerate}
\item If $\Gamma$ is a closed curve, then $V(f; \Gamma)$ is an integer.

\item If $\Gamma$ is a closed curve and if there exists a continuous and
single-valued branch of the argument on $f(\Gamma)$, then $V(f; \Gamma) = 0$.


\item We have $V(f g; \Gamma) = V(f; \Gamma) + V(g; \Gamma)$.

\item If $f(z) = c \neq 0$ is constant on $\Gamma$, then $V(f; \Gamma) = 0$.

\end{enumerate}
\end{prop}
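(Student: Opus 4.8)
The plan is to read off all four properties directly from the defining formula $V(f;\Gamma) = \tfrac{1}{2\pi}(\Theta(\gamma(b)) - \Theta(\gamma(a)))$, where $\Theta$ is a continuous branch of $\arg f$ on $\Gamma$, exploiting only that a closed curve satisfies $\gamma(a) = \gamma(b)$ together with elementary properties of the argument. For property~(1), closedness gives $f(\gamma(a)) = f(\gamma(b))$, so $\Theta(\gamma(a))$ and $\Theta(\gamma(b))$ are two arguments of the same nonzero number and hence differ by an integer multiple of $2\pi$; dividing by $2\pi$ yields $V(f;\Gamma)\in\Z$.

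Properties~(2) and~(4) I would treat together. Under the hypothesis of~(2) there is a continuous single-valued function $\phi$ on $f(\Gamma)$ with $\phi(w)\in\arg w$; then $\Theta\coloneq \phi\circ f$ is a continuous branch of $\arg f$ on $\Gamma$, and single-valuedness combined with $\gamma(a)=\gamma(b)$ forces $\Theta(\gamma(a))=\Theta(\gamma(b))$, whence $V(f;\Gamma)=0$. Property~(4) is then the special case of a nonzero constant $c$, for which $\arg c$ furnishes such a single-valued branch (equivalently, any continuous branch of $\arg c$ on $\Gamma$ is constant). For property~(3), I would pick continuous branches $\Theta_f,\Theta_g$ of $\arg f,\arg g$ and observe that $\Theta_f+\Theta_g$ is continuous with $(\Theta_f+\Theta_g)(z)\in\arg(f(z)g(z))$, so it is a legitimate branch of $\arg(fg)$ on $\Gamma$; additivity of the increment over $[a,b]$ then gives $V(fg;\Gamma)=V(f;\Gamma)+V(g;\Gamma)$.

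None of these steps presents a genuine obstacle, since the result is elementary; the only points requiring care are the two small continuity/compatibility checks, namely that $\phi\circ f$ in~(2) is again a continuous single-valued branch of the argument on $\Gamma$, and that the pointwise sum in~(3) is a continuous branch of $\arg(fg)$, which rests on $\arg(fg)=\arg f+\arg g \pmod{2\pi}$. Because the winding has already been noted to be independent of the chosen branch, no additional argument for well-definedness of the resulting values is needed.
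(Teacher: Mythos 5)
Your argument is correct; the paper itself gives no proof of this proposition but simply cites \cite{Balk1991} and \cite{Sheil-Small2002}, and what you have written is exactly the standard elementary verification from the definition that those references supply. The only point worth making explicit is that the branch $\Theta$ must be read as a continuous lift along the parametrization $t\mapsto\arg f(\gamma(t))$ rather than a single-valued function on the point set $\Gamma$ (otherwise $\gamma(a)=\gamma(b)$ would force $V(f;\Gamma)=0$ in item~(1) as well); your treatment of (1) implicitly uses this, and your observation that in (2) the lift $\phi\circ f\circ\gamma$ \emph{is} single-valued on the point set is precisely what makes the winding vanish there. Your handling of (3) and (4), via additivity of branches and constancy of a continuous selection from the discrete set $\arg c$, is likewise sound.
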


The next result is a version of Rouch\'e's theorem that is somewhat stronger
than the classical one; see~\cite[p.~37]{Balk1991}.  Its formulation
for analytic functions is due to Glicksberg~\cite{Glicksberg1976}; see
also~\cite{Conway1978}. We give a short proof for our more general
setting.

\begin{thm}[Rouch\'e's theorem] \label{thm:WindingRouche}
Let $\Gamma$ be a rectifiable closed curve, and let $f(z)$ and $g(z)$ be two
continuous
functions on $\Gamma$.  If
\begin{equation}
\label{eqn:WindingRouche:boundary_estimate}
\abs{ f(z) + g(z) } < \abs{ f(z) } + \abs{ g(z) }, \quad z \in \Gamma,
\end{equation}
then $f(z)$ and $g(z)$ have the same winding
on $\Gamma$, i.e. $V(f; \Gamma) = V(g; \Gamma)$.
\end{thm}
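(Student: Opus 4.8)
The plan is to prove this stronger Rouch\'e theorem directly from the winding machinery just developed, rather than from any contour-integral formulation, so that it applies to the non-analytic (harmonic) setting we care about. The hypothesis \eqref{eqn:WindingRouche:boundary_estimate} is the key; I would first unpack what it says geometrically. The reverse triangle inequality $\abs{f(z)+g(z)} \le \abs{f(z)} + \abs{g(z)}$ always holds, with equality precisely when $f(z)$ and $g(z)$ point in the same direction, i.e.\ when $f(z) = \lambda g(z)$ for some $\lambda \ge 0$. Thus the strict inequality in the hypothesis says exactly that for every $z \in \Gamma$, the values $f(z)$ and $g(z)$ are \emph{never} non-negative real multiples of one another. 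In particular neither $f$ nor $g$ vanishes on $\Gamma$ (if $f(z_0)=0$ the inequality would read $\abs{g(z_0)} < \abs{g(z_0)}$, a contradiction), so both windings $V(f;\Gamma)$ and $V(g;\Gamma)$ are well defined.

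The central step is to introduce the quotient $h(z) \defby f(z)/g(z)$, which is continuous and nonzero on $\Gamma$. By the multiplicativity of the winding (Proposition~\ref{prop:properties_winding}(3)) applied to $f = h \cdot g$, we have
\begin{equation*}
V(f; \Gamma) = V(h; \Gamma) + V(g; \Gamma),
\end{equation*}
so the theorem reduces to showing $V(h;\Gamma) = 0$. To get this I would show that $h(\Gamma)$ omits a ray, which by Proposition~\ref{prop:properties_winding}(2) forces the winding to vanish. Concretely, dividing the hypothesis through by $\abs{g(z)}$ and writing the inequality in terms of $h$ gives $\abs{h(z)+1} < \abs{h(z)} + 1$ for all $z \in \Gamma$; by the equality-case analysis above this means $h(z)$ is never a non-negative real number, i.e.\ $h(\Gamma) \subseteq \C \setminus [0,\infty)$. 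On the slit plane $\C \setminus [0,\infty)$ there is a continuous single-valued branch of the argument (for instance taking values in $(0,2\pi)$), so a continuous single-valued branch of $\arg h$ exists on $h(\Gamma)$. Proposition~\ref{prop:properties_winding}(2) then yields $V(h;\Gamma) = 0$, and combining with the displayed identity gives $V(f;\Gamma) = V(g;\Gamma)$, as desired.

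The main obstacle, such as it is, lies in the equality-case analysis of the triangle inequality and in being careful that everything is genuinely continuous and single-valued. One must verify cleanly that $\abs{w+1} = \abs{w}+1$ holds iff $w \in [0,\infty)$, and hence that the strict inequality excludes exactly the non-negative real axis; this is elementary but is the conceptual heart that replaces the usual homotopy-of-arguments argument. I would also take care to note that the reduction via Proposition~\ref{prop:properties_winding}(3) is legitimate because $h$ is continuous and nonzero on all of $\Gamma$, and that the slit-plane branch of the argument is continuous on the \emph{image} $h(\Gamma)$, which is what part~(2) of the proposition requires. No contour integration or analyticity is used anywhere, which is precisely why this formulation extends beyond the classical analytic Rouch\'e theorem to the continuous functions we need.
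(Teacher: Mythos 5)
Your proposal is correct and follows essentially the same route as the paper's proof: both pass to the quotient $f/g$, use the strict triangle inequality to show its values avoid $[0,\infty)$, invoke the single-valued argument branch on the slit plane to get zero winding, and conclude via multiplicativity of the winding. The only difference is that you spell out the equality case of the triangle inequality in more detail, which the paper leaves implicit.
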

\begin{proof}
Note first that~\eqref{eqn:WindingRouche:boundary_estimate} implies that $f(z)$
and $g(z)$ are nonzero on $\Gamma$.  For $z \in \Gamma$, the
inequality~\eqref{eqn:WindingRouche:boundary_estimate} yields $\abs{ \tfrac{
f(z) }{ g(z) } + 1 } < \abs{ \tfrac{ f(z) }{ g(z) } } + 1$, hence
$\tfrac{ f(z) }{ g(z) } \in \C \backslash [0, \infty[$.  Thus $V(\tfrac{f}{g};
\Gamma) = 0$ since there is a continuous single-valued argument function on
$\C \backslash [0, \infty[$.  Using Proposition~\ref{prop:properties_winding}
we find $V(f; \Gamma) = V( g \tfrac{f}{g}; \Gamma) = V(g; \Gamma) +
V(\tfrac{f}{g}; \Gamma) = V(g; \Gamma)$.
\eop
\end{proof}

\begin{defn}\label{def:Poincare}
Let the function $f(z)$ be continuous and different from zero in a
punctured neighborhood $D$ of the point $z_0$.  If $f(z)$ is either zero,
not continuous, or not defined at $z_0$, then the point $z_0$ is
called an \emph{isolated exceptional point} of $f(z)$.  The
\emph{Poincar\'{e} index} of the (isolated) exceptional point $z_0$ of
the function $f(z)$ is defined as
\begin{equation*}
\ind(z_0; f) \coloneq V(f; \gamma) = \tfrac{1}{2 \pi} \Delta_\gamma \arg f(z)
\in \Z,
\end{equation*}
where $\gamma$ is an arbitrary closed Jordan curve in $D$ surrounding $z_0$.
\end{defn}

\medskip
The Poincar\'e index is independent of the choice of $\gamma$;
see~\cite{Balk1991}, where exceptional points are called
\emph{critical points}.  In~\cite[p.~44]{Sheil-Small2002} the Poincar\'e 
index is called the ``multiplicity of $f$ at $z_0$''.
It is a generalization of the multiplicity of a zero and
pole of a meromorphic function, as is shown in the next example.

\begin{example} \label{example:poincare_index_for_analytic_functions}
Let $f(z)$ be a meromorphic function.  The only isolated exceptional points of
$f(z)$ are its zeros and poles.  Suppose $f(z) = (z-z_0)^m g(z)$ holds in some
neighborhood of $z_0$, where $g(z)$ is analytic and nonzero in this
neighborhood, and $m \in \Z$.  Then, for a sufficiently small circle $\gamma$
around $z_0$ lying in this neighborhood, Proposition~\ref{prop:properties_winding} yields
\begin{equation*}
\ind(z_0; f) = V(f; \gamma) = V( (z-z_0)^m g(z); \gamma)
= m V(z-z_0; \gamma) + V(g; \gamma) = m.
\end{equation*}
Thus, for a zero of $f(z)$ ($m > 0$) the Poincar\'e index is the multiplicity of
the zero, and for a pole ($m < 0$) it is (minus) the order of the pole.
\end{example}

With the Poincar\'{e} index, an argument principle for continuous
functions with a finite number of exceptional points can be proven;
see~\cite[p.~39]{Balk1991} for the version below,
or~\cite[p.~44]{Sheil-Small2002}.
\begin{thm} \label{thm:arg_principle}
Let $\Gamma$ be a closed Jordan curve.  Let $f(z)$ be continuous on and
interior to $\Gamma$, except possibly for finitely many exceptional points $z_1,
z_2, \ldots, z_k$ in the interior of $\Gamma$.  Then
\begin{equation*}
\tfrac{1}{2 \pi} \Delta_\Gamma \arg f(z) = V(f; \Gamma)
= \ind(z_1;f) + \ind(z_2;f) + \ldots + \ind(z_k; f).
\end{equation*}
\end{thm}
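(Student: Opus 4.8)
The plan is to localize the winding of $f$ around $\Gamma$ into contributions from small circles encircling each exceptional point, by means of a standard slit (or ``keyhole'') construction that reduces everything to Proposition~\ref{prop:properties_winding}(2). First I would use that the exceptional points $z_1,\dots,z_k$ are finitely many and isolated to choose pairwise disjoint closed disks $\overline{D_j}$ centered at $z_j$, each lying in the interior of $\Gamma$ and containing no other exceptional point; write $\gamma_j$ for the positively oriented boundary circle of $D_j$. By Definition~\ref{def:Poincare} we have $\ind(z_j;f) = V(f;\gamma_j)$, so it suffices to establish the identity $V(f;\Gamma) = \sum_{j=1}^{k} V(f;\gamma_j)$.

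Next I would pass to the domain $\Omega$ obtained from the interior of $\Gamma$ by removing the closed disks $\overline{D_j}$. On the closure $\overline{\Omega}$ the function $f$ is continuous and nonvanishing, since every exceptional point has been excised. The domain $\Omega$ is multiply connected (it has $k$ holes), so a single-valued branch of $\arg f$ need not exist on it; to repair this I would introduce $k$ disjoint simple arcs inside $\Omega$, each joining $\Gamma$ to one circle $\gamma_j$ and avoiding all $z_j$, and slit $\Omega$ along them to obtain a simply connected domain $\widetilde{\Omega}$. Because $\widetilde{\Omega}$ is simply connected and $f$ is continuous and nonzero on $\overline{\widetilde{\Omega}}$, a continuous single-valued branch of $\arg f$ exists there, so Proposition~\ref{prop:properties_winding}(2) gives $V(f;\partial\widetilde{\Omega}) = 0$ for the positively oriented boundary $\partial\widetilde{\Omega}$.

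To finish, I would observe that $\partial\widetilde{\Omega}$ traverses $\Gamma$ once counterclockwise, each circle $\gamma_j$ once clockwise, and each slit twice in opposite directions. Since the winding $\tfrac{1}{2\pi}\Delta_\Gamma \arg f$ is additive under concatenation of curves and changes sign under reversal of orientation (both immediate from the definition of winding), the two passages along each slit cancel and the clockwise circles contribute $-V(f;\gamma_j)$; hence $0 = V(f;\partial\widetilde{\Omega}) = V(f;\Gamma) - \sum_{j=1}^{k} V(f;\gamma_j)$, which is the desired identity. The main obstacle is the topological bookkeeping of this construction rather than any analytic estimate: one must verify that the disjoint slits can always be drawn so as to avoid the finitely many exceptional points, that the resulting slit domain is genuinely simply connected, and that its boundary decomposes with exactly the orientations claimed so that the slit contributions cancel. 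The analytic input---continuity and non-vanishing of $f$ on $\overline{\Omega}$ together with Proposition~\ref{prop:properties_winding}(2)---is comparatively routine.
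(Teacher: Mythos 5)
The paper does not actually prove Theorem~\ref{thm:arg_principle}: it is quoted from the literature, with the proof delegated to Balk (p.~39) and Sheil-Small (p.~44). Your slit-domain argument is the standard proof of this continuous argument principle and is essentially sound: excise disjoint small disks around the finitely many exceptional points (the disks can be chosen inside the punctured neighborhoods from Definition~\ref{def:Poincare}, so $f$ is continuous and nonvanishing on the remaining closed region), cut the resulting $k$-holed region along disjoint arcs, and read off $V(f;\Gamma)=\sum_j V(f;\gamma_j)$ from the cancellation of the slit contributions and the reversed orientation of the inner circles. The one place where your wording is not quite right is the claim that a continuous single-valued branch of $\arg f$ exists on $\overline{\widetilde{\Omega}}$: as a subset of $\C$ that closure is just $\overline{\Omega}$, which is not simply connected, so no such global branch need exist there. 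What is true, and what your argument actually needs, is that a branch exists on the \emph{cut} domain in which the two edges of each slit are kept distinct (equivalently, that the boundary loop of $\widetilde{\Omega}$ is null-homotopic in $\overline{\Omega}$, so its image under $f$ is null-homotopic in $\C\setminus\{0\}$ and has winding $0$); alternatively one can avoid the issue entirely by noting that the two traversals of each slit carry windings that cancel because any two continuous branches of $\arg f$ along a single arc differ by a constant, which is exactly the additivity-plus-orientation-reversal argument you already invoke. With that repair the proof is complete and matches the cited references in spirit.
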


This ``abstract argument principle'' includes as special cases the
argument principle for meromorphic functions (see
Example~\ref{example:poincare_index_for_analytic_functions}), and the
argument principles for harmonic functions
\cite{DurenHengartnerLaugesen1996} and for harmonic functions with
poles \cite[Theorems~2.2 and 2.3]{SuffridgeThompson2000}.

We end this section with a discussion of the exceptional points and their
Poincar\'{e} indices of $f(z) = R(z) - \conj{z}$, where $\deg(R) \geq 2$.  
The exceptional points of $f(z)$ are its zeros and its poles.  
The function $f(z)$ has a pole of order $m$ at $z_0$, if $z_0$ is a pole of 
order $m$ of $R(z)$. The assumption $\deg(R) \geq 2$ implies that
$f(z)$ has at most $5(\deg(R)-1)$ zeros (cf. the Introduction). 
Hence all exceptional points of $f(z)$ are isolated and thus have
a Poincar\'{e} index. 

The Poincar\'{e} index of a regular zero of a general harmonic
function can be read off the power series of its analytic and
co-analytic parts.  This characterization was implicitly obtained
in~\cite{DurenHengartnerLaugesen1996}.  Under certain conditions the
Poincar\'{e} index of a pole can be determined in a similar
way~\cite{SuffridgeThompson2000}.
Using~\cite[p.~412]{DurenHengartnerLaugesen1996}, Lemma~2.2 and the
argument principle in~\cite{SuffridgeThompson2000}, we obtain the
following characterization for rational harmonic functions of the form
$f(z) = R(z) - \conj{z}$.

\begin{prop} \label{prop:P_index_for_zeros_poles}
Let $f(z) = R(z) - \conj{z}$ with $\deg(R) \geq 2$.
\begin{compactenum}

\item A sense-preserving zero of $f(z)$ has Poincar\'{e} index $+1$ and a
sense-reversing zero of $f(z)$ has Poincar\'{e} index $-1$.

\item If $z_0$ is a pole of $R(z)$ of order $m$, then $f(z)$ is
sense-preserving in a neighborhood of $z_0$ and has Poincar\'{e} index
$-m$ at $z_0$.

\end{compactenum}
\end{prop}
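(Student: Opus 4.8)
The plan is to prove each statement by relating the Poincaré index (defined as a winding number) to local power series data, using the argument principle and Rouché's theorem from the excerpt.

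For part (1), I would proceed as follows. Let $z_0$ be a regular zero, so $\abs{R'(z_0)} \neq 1$. Since $f(z) = R(z) - \conj{z}$ is harmonic, write it locally as $f = h + \conj{g}$ with $h(z) = R(z)$ analytic and $g(z) = z$, so that the analytic part is $h$ and the co-analytic part is $\conj{g}$. Near $z_0$ we expand
\begin{equation*}
f(z) = f(z_0) + R'(z_0)(z - z_0) - \conj{(z - z_0)} + o(\abs{z - z_0}),
\end{equation*}
and since $f(z_0) = 0$ the leading behavior on a small circle $\gamma$ of radius $\rho$ around $z_0$ is governed by $R'(z_0)(z - z_0) - \conj{(z - z_0)}$. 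In the sense-preserving case $\abs{R'(z_0)} > 1$, on $\gamma$ the analytic term $R'(z_0)(z - z_0)$ dominates the co-analytic term $\conj{(z - z_0)}$ in modulus, while the remainder $o(\rho)$ is negligible for $\rho$ small. I would apply Theorem~\ref{thm:WindingRouche} (Rouché) comparing $f$ with the dominant analytic function $R'(z_0)(z - z_0)$, whose winding on $\gamma$ is $+1$ by Example~\ref{example:poincare_index_for_analytic_functions}; hence $\ind(z_0; f) = +1$. In the sense-reversing case $\abs{R'(z_0)} < 1$, the co-analytic term $-\conj{(z - z_0)}$ dominates, and comparison with it gives winding $V(\conj{z - z_0}; \gamma) = -1$ (the conjugation reverses orientation), so $\ind(z_0; f) = -1$.

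For part (2), let $z_0$ be a pole of $R$ of order $m \geq 1$. Then near $z_0$ the rational part blows up, so $\abs{R'(z)} \to \infty$ as $z \to z_0$; in particular $\abs{R'(z)} > 1$ on a sufficiently small punctured neighborhood, which is exactly the statement that $f$ is sense-preserving there. To compute the index, note that $R(z) = (z - z_0)^{-m} G(z)$ with $G$ analytic and nonzero at $z_0$, so $R(z)$ dominates the bounded term $\conj{z}$ on a small circle $\gamma$. Applying Rouché (Theorem~\ref{thm:WindingRouche}) to compare $f = R(z) - \conj{z}$ with $R(z)$ alone gives $\ind(z_0; f) = V(R; \gamma) = -m$ by Example~\ref{example:poincare_index_for_analytic_functions}. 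Alternatively, since the winding of $f$ on $\gamma$ equals the winding of its dominant analytic part, both statements follow simultaneously from the same estimate.

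The main obstacle I anticipate is the careful justification in part (1) that the remainder terms are genuinely negligible, i.e. that the strict inequality in the Rouché hypothesis~\eqref{eqn:WindingRouche:boundary_estimate} holds uniformly on a small enough circle. Because the zero is assumed \emph{regular}, we have $\abs{R'(z_0)} \neq 1$, giving a strict gap between the analytic and co-analytic leading coefficients, and this gap is what allows the higher-order terms to be absorbed; the singular case $\abs{R'(z_0)} = 1$ is precisely where this argument breaks down (and where the index can differ from $\pm 1$). Rather than reprove these index computations from scratch, I would instead cite the explicit power-series characterization of the Poincaré index for regular zeros from~\cite[p.~412]{DurenHengartnerLaugesen1996} together with the pole analysis in~\cite[Lemma~2.2]{SuffridgeThompson2000}, as the excerpt indicates; the Rouché argument above is the conceptual underpinning of those references and could be included as a self-contained proof if desired.
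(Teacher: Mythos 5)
Your argument is correct, but it is worth noting that the paper itself does not prove this proposition at all: it is obtained by citing the power-series characterization of the index in \cite[p.~412]{DurenHengartnerLaugesen1996} and Lemma~2.2 together with the argument principle of \cite{SuffridgeThompson2000}, exactly as you anticipate in your closing paragraph. Your Rouch\'e computation is a legitimate self-contained alternative that uses only the tools already set up in Section~\ref{sec:background}: in part (1) you compare $f$ with the dominant linear term ($R'(z_0)(z-z_0)$ when $\abs{R'(z_0)}>1$, respectively $-\conj{(z-z_0)}$ when $\abs{R'(z_0)}<1$), where the remainder is in fact $O(\rho^2)$ since $R$ is analytic at a zero of $f$, so the strict gap $\abs{R'(z_0)}\neq 1$ absorbs it on a small enough circle and Theorem~\ref{thm:WindingRouche} combined with Proposition~\ref{prop:properties_winding} and Example~\ref{example:poincare_index_for_analytic_functions} gives winding $+1$, respectively $-1$; in part (2) the factorization $R(z)=(z-z_0)^{-m}G(z)$ dominates the bounded term $\conj{z}$ and also forces $\abs{R'(z)}\to\infty$, yielding both the sense-preserving claim and the index $-m$. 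What your route buys is independence from the external references (at the cost of redoing, for this special class $h=R$, $g(z)=z$, what \cite{DurenHengartnerLaugesen1996} and \cite{SuffridgeThompson2000} establish for general harmonic functions with poles); what the paper's route buys is brevity and a statement that sits inside an established general theory. Your identification of the regularity hypothesis $\abs{R'(z_0)}\neq 1$ as the precise point where the estimate would fail is exactly right.
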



\section{Creating zeros by adding poles}
\label{sec:perturb}

In Theorem~\ref{thm:pert_new_zeros}, the main result of this paper, we
generalize Rhie's construction as outlined in the Introduction.  In
short, the effect of an additive perturbation with a pole at a point
$z_0$ where $R(z) - \conj{z}$ has a sense-reversing zero at which some
derivatives of $R(z)$ vanish, can be roughly summarized as follows:
New zeros appear ``close to $z_0$'', and existing zeros of $R(z) -
\conj{z}$ ``away from $z_0$'' do not disappear.

We denote by $\disk(z,r)$ and $\cdisk(z,r)$ the open and the closed
disks around $z\in\C$ with radius $r>0$, respectively. The open and
closed annuli of radii $r>0$ and $s>0$ around $z\in\C$ are denoted by
$\annu(z, r, s)$ and $\cannu(z, r, s)$, respectively.

\begin{thm}
\label{thm:pert_new_zeros}
Let $f(z) = R(z) - \conj{z}$ with $\deg(R) \geq 2$ satisfy $\lim_{z \to \infty} f(z) = \infty$.
Further, let $z_0 \in \C$ and the integer $n\geq 3$ satisfy
\begin{equation}\label{eqn:z0}
f(z_0)=0,\quad R'(z_0) = \ldots = R^{(n-2)}(z_0) = 0,\quad\text{and}\quad
R^{(n-1)}(z_0)\neq 0,
\end{equation}
and set $\eta \coloneq (\tfrac{n}{n-1})^{\frac{1}{2}}$.
Then, for sufficiently small $\eps > 0$, the disk $\overline{D}(z_0, \eta\sqrt{\eps})$
contains no further zero of $f(z)$ and the function
\begin{equation*}
F(z) \coloneq f(z) + \tfrac{\eps}{z-z_0}
\end{equation*}
satisfies the following:
\begin{compactenum}
\renewcommand{\labelenumi}{(\roman{enumi})}
\item $F(z)$ has at least $n$ zeros in $A(z_0, \eta^{-1} \sqrt{\eps},
\sqrt{\eps})$, at least $n$ zeros in $A(z_0, \sqrt{\eps}, \eta
\sqrt{\eps})$, and no zeros in $\overline{D}(z_0, \eta^{-1} \sqrt{\eps})$.
If $F(z)$ is regular, then at least $n$ of its zeros in $A(z_0, \eta^{-1} \sqrt{\eps},
\sqrt{\eps})$ are sense-preserving and at least $n$ of its zeros in 
$A(z_0, \sqrt{\eps}, \eta \sqrt{\eps})$ are sense-reversing.
\item Denote the regular zeros of $f(z)$ outside $\cdisk(z_0, \eta
\sqrt{\eps})$ by $z_1, \ldots, z_N$.  Then there exist mutually disjoint disks
$\cdisk(z_k,r)$ such that $F(z)$ has exactly one zero
in each $\disk(z_k, r)$, and the index of this zero is $\ind(z_k; f)$.
\item If $f(z)$ is regular, then $f(z)$ and $F(z)$ have the same number of zeros
outside $\cdisk(z_0, \eta \sqrt{\eps})$.
\end{compactenum}
\end{thm}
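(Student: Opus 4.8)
The plan is to treat $F$ on two scales: a microscopic scale $\abs{z-z_0}\sim\sqrt{\eps}$ around the perturbation point (part (i)) and the complementary region (parts (ii), (iii)); in both cases I count zeros with the argument principle (Theorem~\ref{thm:arg_principle}), controlling the winding by Rouch\'e's theorem (Theorem~\ref{thm:WindingRouche}). First I would set $w \coloneq z-z_0$ and expand $R$ about $z_0$. Since $R'(z_0)=\dots=R^{(n-2)}(z_0)=0$, $R^{(n-1)}(z_0)\neq0$ and $R(z_0)=\conj{z_0}$ (as $f(z_0)=0$), writing $c\coloneq R^{(n-1)}(z_0)/(n-1)!$ gives
\[
 f(z)=c\,w^{n-1}-\conj{w}+\textstyle\sum_{k\ge n}a_k w^k,\qquad F(z)=c\,w^{n-1}-\conj{w}+\tfrac{\eps}{w}+\textstyle\sum_{k\ge n}a_k w^k .
\]
The decisive observation is that on a circle $\abs{w}=\rho$ the two anti-holomorphic terms combine into a single term with a \emph{real} coefficient that vanishes and changes sign exactly at $\rho=\sqrt{\eps}$:
\[
 -\conj{w}+\tfrac{\eps}{w}=\bigl(\tfrac{\eps}{\rho}-\rho\bigr)e^{-i\theta},\qquad w=\rho e^{i\theta},
\]
so that $F=c\,w^{n-1}+\bigl(\tfrac{\eps}{\rho}-\rho\bigr)e^{-i\theta}+O(\rho^{n})$ is a competition between a term of winding $n-1$ and a term of winding $-1$.

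Next I would evaluate the winding on the three circles $\rho\in\{\eta^{-1}\sqrt{\eps},\sqrt{\eps},\eta\sqrt{\eps}\}$. For $\rho=\sqrt{\eps}$ the anti-holomorphic term vanishes identically and $F$ reduces to $c\,w^{n-1}$ plus an $O(\eps^{n/2})$ tail, so Rouch\'e gives winding $n-1$. For $\rho=\eta^{\pm1}\sqrt{\eps}$ the coefficient $\tfrac{\eps}{\rho}-\rho$ equals $\pm\sqrt{\eps}/\sqrt{n(n-1)}$, whose modulus dominates $\abs{c}\rho^{n-1}=O(\eps^{(n-1)/2})$ for small $\eps$; hence the winding is $-1$ on both circles (the sign of a real coefficient does not change the winding). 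The value $\eta=(\tfrac{n}{n-1})^{1/2}$ is exactly what makes $\tfrac{\eps}{\rho}-\rho$ attain a fixed nonzero multiple of $\sqrt{\eps}$ on the bracketing circles while keeping them close enough to $\rho=\sqrt{\eps}$ to enclose the new zeros. The same monotone lower bound on $\tfrac{\eps}{\rho}-\rho$ for all $\rho\le\eta^{-1}\sqrt{\eps}$ yields $\abs{F}\ge\sqrt{\eps}/\sqrt{n(n-1)}-o(\sqrt{\eps})>0$, so $\overline{D}(z_0,\eta^{-1}\sqrt{\eps})$ contains no zero of $F$ (an analogous estimate on $f=c\,w^{n-1}-\conj{w}+O(w^n)$ gives $\abs{f}\ge\rho(1-O(\rho^{n-2}))>0$, so $z_0$ is the only zero of $f$ in $\overline{D}(z_0,\eta\sqrt{\eps})$). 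Since the simple pole of $F$ at $z_0$ has index $-1$, feeding the windings $-1,\,n-1,\,-1$ into the argument principle makes the zeros of $F$ in $A(z_0,\eta^{-1}\sqrt{\eps},\sqrt{\eps})$ have index sum $n$ and those in $A(z_0,\sqrt{\eps},\eta\sqrt{\eps})$ have index sum $-n$. As a regular zero has index $\pm1$ (Proposition~\ref{prop:P_index_for_zeros_poles}), regularity of $F$ forces at least $n$ zeros in each annulus, at least $n$ of them sense-preserving in the inner and sense-reversing in the outer; in the general case the same index sums give at least $n$ zeros once one invokes that a zero of a function $R_F(z)-\conj{z}$ cannot have index above $+1$ (resp.\ below $-1$).

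For part (ii), away from $z_0$ the perturbation is uniformly small: on a small circle $\partial D(z_k,r)$ around a regular zero $z_k$ one has $\abs{\tfrac{\eps}{z-z_0}}=O(\eps)$ while $\abs{f}$ is bounded below, so Theorem~\ref{thm:WindingRouche} gives $V(F;\partial D(z_k,r))=V(f;\partial D(z_k,r))=\ind(z_k;f)$. Because $z_k$ is regular, $\abs{R'(z_k)}\neq1$, and since $R_F'=R'-\tfrac{\eps}{(z-z_0)^2}$ differs from $R'$ by $O(\eps)$ near $z_k$, the quantity $\abs{R_F'}$ stays on one side of $1$ throughout $D(z_k,r)$ for small $\eps$; thus $F$ has no singular point there, every zero of $F$ in $D(z_k,r)$ has the same index $\pm1$, and the winding $\pm1$ forces exactly one zero, of index $\ind(z_k;f)$. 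Choosing $r$ so that the $\overline{D}(z_k,r)$ are disjoint proves (ii). For part (iii), regularity of $f$ means its only zeros outside $\overline{D}(z_0,\eta\sqrt{\eps})$ are $z_1,\dots,z_N$, so it remains to show $F$ has no other zeros there: on the intermediate shell $\eta\sqrt{\eps}\le\abs{w}\le\delta_0$ the dominance estimate of part (i) gives $\abs{F}\ge\abs{\tfrac{\eps}{\rho}-\rho}-\abs{c}\rho^{n-1}-O(\rho^n)>0$, and on the remaining compact region $\abs{w}\ge\delta_0$ (which includes neighborhoods of the poles of $R$, and is truncated at a large circle using $\lim_{z\to\infty}f=\infty$) the function $f$ is bounded away from $0$ while the perturbation is $O(\eps)$, so $F\neq0$. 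Hence $F$ has exactly $N$ zeros outside $\overline{D}(z_0,\eta\sqrt{\eps})$, which is (iii).

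I expect the main obstacle to be the near-$z_0$ analysis of part (i): finding the correct scale $\abs{w}\sim\sqrt{\eps}$, exploiting the exact cancellation of $-\conj{w}+\tfrac{\eps}{w}$ on circles, and carrying out the three winding comparisons with error control sharp enough to pin the windings to $-1,\,n-1,\,-1$. The genuine subtlety is that the new zeros accumulate near $\abs{w}=\sqrt{\eps}$, precisely where the holomorphic and anti-holomorphic parts are comparable and $\abs{R_F'}\approx1$, so the sense of these zeros is ambiguous; this is why the sharp sense-preserving/sense-reversing conclusion must be conditioned on regularity of $F$, and why the bare count in the non-regular case has to lean on an index bound rather than on a direct local sense analysis.
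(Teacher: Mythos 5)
Your treatment of parts \textit{(ii)} and \textit{(iii)} matches the paper's in substance: small disks around the regular zeros, Rouch\'e's theorem (Theorem~\ref{thm:WindingRouche}) to preserve the winding, and the observation that $R_F'=R'-\eps/(z-z_0)^2$ stays on the same side of modulus $1$ as $R'$ to force exactly one zero of the correct index per disk. Your handling of \textit{(iii)} on the intermediate shell $\eta\sqrt{\eps}\le\abs{w}\le\delta_0$ is in fact a genuine streamlining: the bound $\abs{-\conj{w}+\eps/w}=\rho-\eps/\rho\ge\rho/n$ there, against holomorphic terms of size $O(\rho^{n-1})$ with $n\ge3$, shows directly that $F\neq0$, whereas the paper instead proves $\abs{R_F'}<1$ on $A(z_0,\eta\sqrt{\eps},r_1)$ and excludes extra sense-reversing zeros by a winding count over $\Gamma_2$ combined with $V(F;\gamma)=-1$ (Corollary~\ref{cor:windF}). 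Your three windings $-1,\,n-1,\,-1$ on the circles $\abs{w}=\eta^{-1}\sqrt{\eps},\sqrt{\eps},\eta\sqrt{\eps}$ are also correct and consistent with the paper's model function $G(w)=cw^{n-1}+\eps/w-\conj{w}$, whose zeros the paper locates explicitly in Lemmas~\ref{lem:num_zeros_G}--\ref{lem:sense_zeros_G}.

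The gap is in the unconditional count of part \textit{(i)}. From the circle windings and the argument principle you obtain only that the indices of the zeros of $F$ sum to $+n$ over the inner annulus and to $-n$ over the outer one. To convert these sums into ``at least $n$ zeros'' \emph{without} assuming $F$ regular, you invoke that an isolated zero of $R_F(z)-\conj{z}$ cannot have index outside $\{-1,0,+1\}$. That assertion happens to be true (since the co-analytic part $-\conj{z}$ has nonvanishing derivative, the image of a small circle about a zero crosses a fixed line through the origin exactly twice, which caps the winding at $1$ in modulus), but it concerns \emph{singular} zeros, is not covered by Proposition~\ref{prop:P_index_for_zeros_poles}, and you give no argument for it. The paper's proof is structured precisely to avoid needing it: each of the $2n$ zeros of $G$ is enclosed in its own annular sector, bounded by the two relevant radii and by rays $\arg w=2k\pi/n$, $(2k+2)\pi/n$ (resp.\ the shifted rays), on whose boundary $\abs{G}>M\abs{w}^n$; Rouch\'e then gives winding $\pm1$ on each sector boundary, so each of the $2n$ \emph{disjoint} sectors contains at least one zero of $F$, with no per-zero index bound required. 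As written, your proposal establishes \textit{(i)} only under the extra hypothesis that $F$ is regular, unless you supply a proof of the index bound or switch to the sector localization.
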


\medskip

The proof of this result will be given in two parts, spanning
the Sections~\ref{sec:near_z0} and~\ref{sec:away_z0}.  First we will
give several remarks on the technical conditions on $f(z)$ and $F(z)$
we impose in this theorem.

\begin{rem}
\begin{compactenum}
\item The condition that $\lim_{z \to \infty} f(z) = \infty$ is rather
nonrestrictive.  First note that if $f(z)$ has a limit for $z \rightarrow
\infty$, this limit is $\infty$.  Further, $f(z)$ has no limit for $z \to \infty$ only when $R(z)
= a_1 z + \tfrac{p(z)}{q(z)}$, with $\abs{a_1} = 1$ and $\deg(p) \leq \deg(q)$.
For all other $R(z)$, the limit  $\lim_{z \to \infty} f(z)$ exists.

\item As shown in~\cite[p.~1081]{KhavinsonNeumann:2006},
the set of regular rational functions is dense in the space of all rational
functions with respect to the supremum norm on the Riemann sphere.  Hence,
restricting $f(z)$ to be regular in~\textit{(iii)} is a very mild condition.

\item However, the effects of a perturbation on a non-regular function $f(z)$
can be quite unpleasant: Zeros of $f(z)$ far away from $z_0$ can vanish or 
additional zeros may appear.  This is why the regularity assumptions 
in \textit{(ii)} and \textit{(iii)} are necessary.
\end{compactenum}
\end{rem}

\medskip

In particular, Theorem~\ref{thm:pert_new_zeros} can be used to explain
the maximality of Rhie's construction for a gravitational point lens,
which we briefly discussed in the introduction.  This is the content
of the following Corollary; see also
Section~\ref{sec:example_circlens} and Figure~\ref{fig:circlens}.

\begin{cor} \label{cor:produce_extremal_function}
Under the assumptions of Theorem~\ref{thm:pert_new_zeros}, suppose that
$f(z)=R(z)-\conj{z}$ is regular and has $3n+1$ zeros, and that $\deg(R) = n$.
Then, for sufficiently small $\eps > 0$, the function $R(z) +
\tfrac{\eps}{z-z_0}$ is extremal.
\end{cor}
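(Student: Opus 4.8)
The plan is to apply Theorem~\ref{thm:pert_new_zeros} almost verbatim; the only genuine computation is the degree of the perturbed rational function. Write $\tilde{R}(z) \coloneq R(z) + \tfrac{\eps}{z-z_0}$, so that $\tilde{R}(z) - \conj{z} = F(z)$ is exactly the function studied in the theorem. Since $\tilde{R}$ is extremal precisely when $F$ attains the maximal number $5(\deg(\tilde{R})-1)$ of zeros, the first task is to determine $\deg(\tilde{R})$, and the second is to count the zeros of $F$ using parts~(i) and~(iii) of the theorem.

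First I would compute the degree. Writing $R = \tfrac{p}{q}$ in lowest terms with $\max(\deg p, \deg q) = \deg(R) = n$, we have
\[
\tilde{R}(z) = \frac{p(z)(z-z_0) + \eps\, q(z)}{q(z)(z-z_0)}.
\]
The hypothesis $f(z_0)=0$ gives $R(z_0)=\conj{z_0}\in\C$, so $z_0$ is not a pole of $R$, i.e.\ $q(z_0)\neq 0$. Hence at $z=z_0$ the numerator equals $\eps\, q(z_0)\neq 0$, and at any root $w$ of $q$ the numerator equals $p(w)(w-z_0)\neq 0$ (using that $p,q$ are coprime and $w\neq z_0$); therefore numerator and denominator are coprime for small $\eps>0$. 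The denominator has degree $\deg q + 1$ and the numerator has degree $\max(\deg p + 1, \deg q)$, so a short case distinction ($\deg p \geq \deg q$ versus $\deg p < \deg q$) gives $\deg(\tilde{R}) = \max(\deg p, \deg q) + 1 = n+1$. Consequently the maximal number of zeros of $F$ is $5((n+1)-1)=5n$.

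Next I would count the zeros of $F$ by splitting at the circle $\abs{z-z_0}=\eta\sqrt{\eps}$. Inside, part~(i) of Theorem~\ref{thm:pert_new_zeros} provides at least $n$ zeros in each of the two disjoint open annuli $A(z_0,\eta^{-1}\sqrt{\eps},\sqrt{\eps})$ and $A(z_0,\sqrt{\eps},\eta\sqrt{\eps})$, hence at least $2n$ zeros of $F$ in $\cdisk(z_0,\eta\sqrt{\eps})$. Outside, since $f$ is regular, part~(iii) says that $F$ and $f$ have the same number of zeros in the exterior of $\cdisk(z_0,\eta\sqrt{\eps})$. By the theorem this disk contains no zero of $f$ other than $z_0$, and $f$ has $3n+1$ zeros in total, so $f$ --- and therefore $F$ --- has exactly $3n$ zeros outside $\cdisk(z_0,\eta\sqrt{\eps})$.

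Combining the two counts, $F$ has at least $2n + 3n = 5n$ zeros, and since $5n$ is the maximal possible number, $F = \tilde{R} - \conj{z}$ has exactly $5n$ zeros; thus $\tilde{R} = R + \tfrac{\eps}{z-z_0}$ is extremal. I expect the only subtle point to be the degree computation --- specifically, verifying that adding the simple pole at $z_0$ genuinely raises the degree by one (no cancellation, precisely because $z_0$ is not a pole of $R$) rather than leaving it unchanged. Once $\deg(\tilde{R})=n+1$ is established, the conclusion follows immediately from the three parts of the theorem with no further estimates.
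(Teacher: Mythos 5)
Your proposal is correct and follows essentially the same route as the paper: at least $2n$ new zeros inside $\cdisk(z_0,\eta\sqrt{\eps})$ from part~(i), exactly $3n$ surviving zeros outside from part~(iii) (since the disk contains no zero of $f$ other than $z_0$), and then the Khavinson--Neumann bound $5n$ forces equality. The only difference is that you spell out the verification that $\deg\bigl(R+\tfrac{\eps}{z-z_0}\bigr)=n+1$, which the paper leaves implicit; that check is accurate and a worthwhile addition.
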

\begin{proof}
By Theorem~\ref{thm:pert_new_zeros} the function $F(z)=R(z) + \tfrac{\eps}{z-z_0}-\conj{z}$ 
with sufficiently small $\eps>0$ has at least $2n$ zeros
inside  $A(z_0, \eta^{-1} \sqrt{\eps},\eta\sqrt{\eps})$ (cf. item~\textit{(i)})
and exactly $3n$ zeros outside $\overline{D}(z_0, \eta \sqrt{\eps})$ (cf.
item~\textit{(iii)}). Since $F(z)$ may have at most $5n$ zeros, it in fact has
exactly $5n$ zeros, so that $R(z) + \tfrac{\eps}{z-z_0}$ is extremal.
\eop
\end{proof}

In the following Section~\ref{sec:near_z0} we prove part \textit{(i)}
of Theorem~\ref{thm:pert_new_zeros}.  The proof of parts \textit{(ii)}
and \textit{(iii)} are the content of Section~\ref{sec:away_z0}.
Finally, in Section~\ref{sec:other_z0}, we analyze the effect of a
perturbation with a pole at points $z_0$ that do not satisfy the
assumptions of Theorem~\ref{thm:pert_new_zeros}.

\subsection{Behavior near $z_0$ (Proof of \textit{(i)} in Theorem~\ref{thm:pert_new_zeros})}
\label{sec:near_z0}

The idea of our proof is the following: We approximate $F(z)$ by a truncation of the 
Laurent series of the analytic part of $F(z)$, and we show that this approximation 
has $2n$ zeros near $z_0$ that carry over to zeros of $F(z)$, provided $\eps$ is sufficiently small.

Let
\begin{equation*}
R(z) = \sum_{k=0}^\infty \tfrac{R^{(k)}(z_0)}{k!} (z-z_0)^k
\end{equation*}
be the series expansion of $R(z)$ at $z_0$ (convergent in some suitable disk 
around $z_0$). For the moment, consider any $\eps>0$. We will specify below 
when $\eps$ is ``sufficiently small''. Using~\eqref{eqn:z0} we can write
\begin{equation*}
\begin{split}
F(z)&=R(z)+\tfrac{\eps}{z-z_0}-\conj{z}\\
&=\tfrac{R^{(n-1)}(z_0)}{(n-1)!}(z-z_0)^{n-1}+\sum_{k=n}^\infty \tfrac{
R^{(k)}(z_0) }{ k! } 
(z-z_0)^k+\tfrac{\eps}{z-z_0}-\conj{z-z_0}.
\end{split}
\end{equation*}
We substitute $w \coloneq z-z_0$, so that $z_0$ is mapped to $0$. To simplify
notation, we write again $F(w)$ instead of introducing a new notation
$\widetilde{F}(w) \coloneq F(z)$.  In the following we assume that the same
substitution has been applied to $R(z)$ and $f(z)$ in order to obtain $R(w)$ and
$f(w)$.
We set $c \coloneq \tfrac{R^{(n-1)}(0)}{ (n-1)! } \neq 0$ and hence obtain
\begin{equation}
    \label{eqn:F_apart}
    F(w) = c w^{n-1} + \sum_{k=n}^\infty \tfrac{ R^{(k)}(0) }{ k! } w^k
      + \tfrac{\eps}{w} - \overline{w}.
\end{equation}

Consider the function
\begin{equation}
G(w) \coloneq c w^{n-1} + \tfrac{\eps}{w} - \overline{w}, \label{eqn:defn_G}
\end{equation}
obtained from $F(w)$ by truncation of the series expansion of $R(w)$.
The zeros of $G(w)$ are the solutions of the equation
\begin{equation}
    \label{eqn:simplified_local_eqn}
    c w^n - \abs{w}^2 + \eps = 0.
\end{equation}
In the following, we will assume without loss of generality that $c > 0$.
Indeed, the solutions of~\eqref{eqn:simplified_local_eqn} with general $c \neq
0$ have the form $e^{- i \frac{\arg(c)}{n}} w$, where $w$ solves $\abs{c} w^n -
\abs{w}^2 + \eps = 0$.

The next three lemmata characterize the zeros of $G(w)$.  First, we will
derive conditions on $\eps$ such that $G(w)$ admits a maximal number
of zeros (Lemma~\ref{lem:num_zeros_G}).  Then, in
Lemma~\ref{lem:loc_zeros_G}, we derive bounds on the moduli of certain
such zeros that will be needed later.  The sense of these zeros is
determined in Lemma~\ref{lem:sense_zeros_G}.

\begin{lem}
\label{lem:num_zeros_G}
Let $n \geq 3$, $c > 0$ and $0 < \eps < \eps_\ast \coloneq
\tfrac{n-2}{n} \big( \tfrac{2}{nc} \big)^{\frac{2}{n-2}}$.
Then~\eqref{eqn:simplified_local_eqn} has exactly $3n$ solutions
$\rho_1 e^{i \frac{(2k+1) \pi}{n}}$, $\rho_2 e^{i \frac{2k \pi}{n}}$,
$\rho_3 e^{i \frac{2k \pi}{n}}$, $1 \leq k \leq n$, where $\rho_1 <
\sqrt{\eps} < \rho_2 < \big( \tfrac{2}{nc} \big)^{\frac{1}{n-2}} < \rho_3$.
\end{lem}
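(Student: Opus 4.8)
The plan is to pass to polar coordinates $w = \rho e^{i\theta}$ with $\rho > 0$ (note that $w = 0$ cannot solve~\eqref{eqn:simplified_local_eqn}, as it would force $\eps = 0$) and to exploit the fact that the equation separates into an angular and a real condition. Rewriting~\eqref{eqn:simplified_local_eqn} as $c \rho^n e^{i n \theta} = \rho^2 - \eps$, I observe that the right-hand side is real while the left-hand side has modulus $c \rho^n > 0$; hence $\rho^2 - \eps \neq 0$ (so $\rho \neq \sqrt{\eps}$) and $e^{i n \theta}$ must be real, i.e. $e^{i n \theta} = \pm 1$. This reduces the two-dimensional problem to two one-dimensional real equations, one for each sign.

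First I would treat the case $e^{i n \theta} = -1$, corresponding to the angles $\theta = \tfrac{(2k+1)\pi}{n}$. Here the equation becomes $c \rho^n + \rho^2 - \eps = 0$. The left-hand side is strictly increasing in $\rho > 0$, running from $-\eps$ to $+\infty$, so it has a unique positive root $\rho_1$; moreover $c \rho_1^n > 0$ forces $\rho_1^2 < \eps$, i.e. $\rho_1 < \sqrt{\eps}$. The $n$ angles $\tfrac{(2k+1)\pi}{n}$, $1 \leq k \leq n$, are distinct modulo $2\pi$, so this case yields exactly $n$ solutions of modulus $\rho_1$.

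The main work is the case $e^{i n \theta} = 1$, with angles $\theta = \tfrac{2k\pi}{n}$, where I must count the positive roots of $h(\rho) \coloneq c \rho^n - \rho^2 + \eps$. Since $h(0) = \eps > 0$ and $h(\rho) \to +\infty$, while $h'(\rho) = \rho(n c \rho^{n-2} - 2)$ changes sign exactly once, the function $h$ is strictly decreasing on $(0, \rho_\ast)$ and strictly increasing on $(\rho_\ast, \infty)$, where $\rho_\ast \coloneq \big(\tfrac{2}{nc}\big)^{1/(n-2)}$. The crux is to evaluate the minimum: using $c \rho_\ast^n = \tfrac{2}{n}\rho_\ast^2$ one finds $h(\rho_\ast) = \eps - \tfrac{n-2}{n}\rho_\ast^2 = \eps - \eps_\ast$, which is negative precisely because $\eps < \eps_\ast$. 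By the intermediate value theorem together with strict monotonicity on each branch, $h$ then has exactly two positive roots $\rho_2 \in (0, \rho_\ast)$ and $\rho_3 \in (\rho_\ast, \infty)$. Evaluating $h(\sqrt{\eps}) = c\,\eps^{n/2} > 0$, using $\sqrt{\eps} < \rho_\ast$ (which follows from $\eps < \eps_\ast < \rho_\ast^2$), and invoking the monotonicity of $h$ on $(0, \rho_\ast)$ shows $\sqrt{\eps} < \rho_2$, establishing the full ordering $\rho_1 < \sqrt{\eps} < \rho_2 < \rho_\ast < \rho_3$. Each of $\rho_2, \rho_3$ contributes the $n$ distinct angles $\tfrac{2k\pi}{n}$.

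To finish, I would collect the $n + n + n = 3n$ solutions and verify that they are pairwise distinct: the $\rho_1$-family uses the angles $\tfrac{(2k+1)\pi}{n}$ and the other two families the angles $\tfrac{2k\pi}{n}$, and since $\rho_1, \rho_2, \rho_3$ are three distinct moduli, no coincidence occurs even between the $\rho_2$- and $\rho_3$-families (which share the same angles). The only genuinely nonroutine step is the exact evaluation of the minimum $h(\rho_\ast)$; everything else is the reduction to the real axis and elementary single-variable calculus. Indeed, the threshold $\eps_\ast$ in the hypothesis is calibrated exactly so that $\eps < \eps_\ast$ is equivalent to $h(\rho_\ast) < 0$, which is precisely the condition that separates the tangential (double-root) case from the one producing two simple roots.
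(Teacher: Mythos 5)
Your proof is correct and follows essentially the same route as the paper: reduce to $e^{in\theta}=\pm 1$, solve $c\rho^n+\rho^2-\eps=0$ for the inner family, and analyze $c\rho^n-\rho^2+\eps$ via its unique positive critical point $\rho_\ast=(\tfrac{2}{nc})^{1/(n-2)}$, with the sign of the minimum value $\eps-\eps_\ast$ governing the existence of two roots. The only cosmetic difference is that you use monotonicity and the intermediate value theorem where the paper invokes Descartes' rule of signs to bound the number of positive roots.
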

\begin{proof}
Write $w = \rho e^{i \varphi}$ with $\rho > 0$ and $\varphi \in \R$.
Equation~\eqref{eqn:simplified_local_eqn} is equivalent to
\begin{equation}
    \label{eqn:reduced_eqn_real_eps}
    c \rho^n e^{i n \varphi} - \rho^2 + \eps = 0,\quad
    \text{or}\quad e^{i n \varphi} = \tfrac{ \rho^2 - \eps }{ c \rho^n }.
\end{equation}
Thus $e^{i n \varphi}$ is real and we distinguish the two cases $e^{i n \varphi} =
\pm 1$.

If $e^{i n \varphi} = -1$, then $\rho^2 < \eps$, $\varphi = \tfrac{(2 k + 1)
\pi}{n}$ for some $k \in \Z$, and equation~\eqref{eqn:reduced_eqn_real_eps}
becomes $c \rho^n + \rho^2 - \eps = 0$.
By Descartes' rule of signs (see \cite[p.~442]{Henrici1974}), this equation has
exactly one positive root, say $\rho_1 < \sqrt{\eps}$.

If $e^{i n \varphi} = +1$, then $\rho^2 > \eps$,
$\varphi = \tfrac{2 k \pi}{n}$ for some $k \in \Z$,
and~\eqref{eqn:reduced_eqn_real_eps} yields
\begin{equation*}
f_+(\rho) \coloneq c \rho^n - \rho^2 + \eps = 0.
\end{equation*}
By Descartes' rule of signs $f_+(\rho)$ has $0$ or $2$ positive roots,
counting multiplicities.  We derive a necessary and sufficient condition on
$\eps$ such that $f_+(\rho)$ has two (distinct) positive roots.
Since $f_+'(\rho) = n c \rho^{n-1} - 2 \rho = \rho ( n c \rho^{n-2} - 2 )$, the
only positive critical point of $f_+(\rho)$ is $\rho = \big( \tfrac{2}{nc}
\big)^{\frac{1}{n-2}}$.  From $f_+(0) = \eps > 0$ and $\lim_{\rho \to \infty}
f_+(\rho) = \infty$, we see that $f_+(\rho)$ has two distinct positive
roots if and only if $f_+ \big( \big( \tfrac{2}{nc} \big)^{\frac{1}{n-2}}
\big) < 0$, which is equivalent to $\eps < \eps_\ast$.
Hence, $f_+(\rho)$ has two distinct positive roots $\sqrt{\eps} < \rho_2 < \big(
\tfrac{2}{nc} \big)^{\frac{1}{n-2}} < \rho_3$ if and only if $\eps < \eps_\ast$.
\eop
\end{proof}


\begin{lem} \label{lem:loc_zeros_G}
In the notation of Lemma~\ref{lem:num_zeros_G}, if
\begin{equation}
0 < \eps < \min \big\{ \eps_\ast, \;
( \tfrac{1}{nc} )^{\frac{2}{n-2}} ( \tfrac{n}{n-1} )^{\frac{n}{n-2}}, \;
( \tfrac{1}{c(n-1)} )^{\frac{2}{n-2}} (\tfrac{n-1}{n})^{\frac{n}{n-2}} \big\},
\label{eqn:eps_cond_sharper_bound}
\end{equation}
then $G(w)$ has $3n$ zeros, and we have
\begin{equation*}
\eta^{-1} \sqrt{\eps} < \rho_1 < \sqrt{\eps} < \rho_2 <
\eta \sqrt{\eps}, 
\quad \text{where } \eta \coloneq (\tfrac{n}{n-1})^{\frac{1}{2}}.
\end{equation*}
\end{lem}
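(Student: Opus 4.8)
The plan is to derive both displayed inequalities from the sign analysis already established in the proof of Lemma~\ref{lem:num_zeros_G}, rather than from any fresh study of $G(w)$. Since the minimum in~\eqref{eqn:eps_cond_sharper_bound} includes $\eps_\ast$ as its first term, the hypothesis in particular gives $0 < \eps < \eps_\ast$, so Lemma~\ref{lem:num_zeros_G} applies directly: $G(w)$ has $3n$ zeros and their moduli satisfy $\rho_1 < \sqrt{\eps} < \rho_2 < \big(\tfrac{2}{nc}\big)^{1/(n-2)} < \rho_3$. It therefore remains only to sharpen the two outer estimates to $\eta^{-1}\sqrt{\eps} < \rho_1$ and $\rho_2 < \eta\sqrt{\eps}$, and I would argue that the second and third terms in the minimum are exactly calibrated to force these. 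Each estimate reduces to a single sign evaluation of one of the auxiliary polynomials from the previous proof.

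First I would treat $\rho_1$, the unique positive root of $f_-(\rho) \coloneq c\rho^n + \rho^2 - \eps$. Since $f_-'(\rho) = nc\rho^{n-1} + 2\rho > 0$ for $\rho > 0$ and $f_-(0) = -\eps < 0$, the function $f_-$ is strictly increasing and negative on $[0, \rho_1)$; hence $\eta^{-1}\sqrt{\eps} < \rho_1$ is equivalent to $f_-(\eta^{-1}\sqrt{\eps}) < 0$. Substituting $\rho = \eta^{-1}\sqrt{\eps}$ and using $\eta^{-2} = \tfrac{n-1}{n}$ collapses $\rho^2 - \eps$ to $-\tfrac{1}{n}\eps$, so the inequality becomes $c\rho^n < \tfrac{\eps}{n}$. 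Dividing by $\eps$ and solving the resulting power-of-$\eps$ inequality yields precisely the second bound $\big(\tfrac{1}{nc}\big)^{2/(n-2)}\big(\tfrac{n}{n-1}\big)^{n/(n-2)}$ in~\eqref{eqn:eps_cond_sharper_bound}.

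For $\rho_2$ I would use the polynomial $f_+(\rho) = c\rho^n - \rho^2 + \eps$ from the proof of Lemma~\ref{lem:num_zeros_G}, which is positive on $[0,\rho_2)$, negative on $(\rho_2,\rho_3)$, and positive again beyond $\rho_3$. Consequently $f_+(\eta\sqrt{\eps}) < 0$ already forces $\eta\sqrt{\eps}$ to lie strictly between $\rho_2$ and $\rho_3$, giving $\rho_2 < \eta\sqrt{\eps}$. Substituting $\rho = \eta\sqrt{\eps}$ and using $\eta^2 = \tfrac{n}{n-1}$ collapses $-\rho^2 + \eps$ to $-\tfrac{1}{n-1}\eps$, so the inequality becomes $c\rho^n < \tfrac{\eps}{n-1}$, which again rearranges to exactly the third bound $\big(\tfrac{1}{c(n-1)}\big)^{2/(n-2)}\big(\tfrac{n-1}{n}\big)^{n/(n-2)}$ in~\eqref{eqn:eps_cond_sharper_bound}. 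Taking the minimum of all three bounds makes both sign conditions hold simultaneously, completing the chain $\eta^{-1}\sqrt{\eps} < \rho_1 < \sqrt{\eps} < \rho_2 < \eta\sqrt{\eps}$.

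The computations are routine; the only genuine subtlety, and the step I would be most careful about, is getting the two monotonicity arguments the right way round. For $\rho_1$ the relevant function $f_-$ is monotone on $(0,\infty)$, so a single negative value pins the root to one side. For $\rho_2$, however, $f_+$ is not monotone near its roots, and it is essential to invoke the full sign pattern from Lemma~\ref{lem:num_zeros_G} (positive–negative–positive) to conclude that a point where $f_+$ is negative must lie above $\rho_2$ rather than above $\rho_3$. Once these sign patterns are in hand, matching the resulting inequalities to~\eqref{eqn:eps_cond_sharper_bound} is purely mechanical.
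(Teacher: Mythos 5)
Your proposal is correct and follows essentially the same route as the paper: both bounds are obtained by evaluating the sign of $f_-$ at $\eta^{-1}\sqrt{\eps}$ and of $f_+$ at $\eta\sqrt{\eps}$, and translating each sign condition into the corresponding term of the minimum in~\eqref{eqn:eps_cond_sharper_bound}. Your explicit justification of why a negative value of $f_+$ at $\eta\sqrt{\eps}$ locates that point between $\rho_2$ and $\rho_3$ (using the positive--negative--positive sign pattern together with $f_+(\sqrt{\eps})>0$) is a welcome elaboration of a step the paper leaves implicit.
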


\begin{proof}
Recall that $\rho_1$ is the positive root of $f_-(\rho) \coloneq c \rho^n
+ \rho^2 - \eps$.  Note that $f_-(\sqrt{\eps}) > 0$.  We then
have $(\tfrac{n-1}{n})^{\frac{1}{2}} \sqrt{ \eps}
< \rho_1 < \sqrt{\eps}$, if
\begin{equation*}
f_- \Big( (\tfrac{n-1}{n})^{\frac{1}{2}} \sqrt{\eps} \Big)
= c (\tfrac{n-1}{n})^{\frac{n}{2}} \eps^{\frac{n}{2}} + \tfrac{n-1}{n} \eps
- \eps
= c (\tfrac{n-1}{n})^{\frac{n}{2}} \eps^{\frac{n}{2}} -
\tfrac{1}{n} \eps < 0,
\end{equation*}
which holds if and only if $\eps < ( \tfrac{1}{nc} )^{\frac{2}{n-2}} (
\tfrac{n}{n-1} )^{\frac{n}{n-2}}$.
To derive the bound for $\rho_2$, the smaller positive root of
$f_+(\rho) \coloneq c \rho^n - \rho^2 + \eps$, note first that
$f_+(\sqrt{\eps}) = c \sqrt{\eps}^n > 0$.
We then have $\sqrt{\eps} < \rho_2 < (\tfrac{n}{n-1})^{\frac{1}{2}}
\sqrt{\eps}$, if
\begin{equation*}
f_+ \big( (\tfrac{n}{n-1})^{\frac{1}{2}} \sqrt{\eps} \big)
= c (\tfrac{n}{n-1})^{\frac{n}{2}} \eps^{\frac{n}{2}} - \tfrac{n}{n-1} \eps
+ \eps = c
(\tfrac{n}{n-1})^{\frac{n}{2}} \eps^{\frac{n}{2}} - \tfrac{1}{n-1} \eps < 0,
\end{equation*}
which holds if and only if $\eps < ( \tfrac{1}{c(n-1)} )^{\frac{2}{n-2}}
(\tfrac{n-1}{n})^{\frac{n}{n-2}}$.
\eop
\end{proof}

\begin{lem} \label{lem:sense_zeros_G}
Let $\eps > 0$ satisfy condition~\eqref{eqn:eps_cond_sharper_bound}.
Then $G(w)$ in~\eqref{eqn:defn_G} is sense-preserving at its zeros 
$\rho_1 e^{i \frac{(2k+1) \pi}{n}}$ and sense-reversing at its zeros
$\rho_2 e^{i \frac{2k \pi}{n}}$, $1 \leq k \leq n$.
\end{lem}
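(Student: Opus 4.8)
The plan is to apply Definition~\ref{defn:sense-pres-rev} directly. Observe that $G(w) = R(w) - \conj{w}$ with the rational function $R(w) = c w^{n-1} + \tfrac{\eps}{w} = \tfrac{c w^n + \eps}{w}$, so that a (necessarily nonzero) zero $w_0$ of $G$ is sense-preserving precisely when $\abs{R'(w_0)} > 1$ and sense-reversing precisely when $\abs{R'(w_0)} < 1$. Thus the entire statement reduces to estimating the single quantity $\abs{R'(w_0)}$ at the zeros $\rho_1 e^{i(2k+1)\pi/n}$ and $\rho_2 e^{i 2k\pi/n}$, where $R'(w) = c(n-1) w^{n-2} - \tfrac{\eps}{w^2}$.

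The key step is a simplification of $R'$ that uses the defining equation of the zeros. Multiplying by $w^2$ gives $w^2 R'(w) = c(n-1) w^n - \eps$. At any zero of $G$ equation~\eqref{eqn:simplified_local_eqn} holds, i.e.\ $c w^n = \abs{w}^2 - \eps$, and substituting this yields the clean identity
\begin{equation*}
w^2 R'(w) = (n-1)\abs{w}^2 - n\eps .
\end{equation*}
The right-hand side is real and depends only on $\rho \coloneq \abs{w}$, so taking moduli and using $\abs{w^2} = \rho^2$ we obtain
\begin{equation*}
\abs{R'(w)} = \frac{\abs{(n-1)\rho^2 - n\eps}}{\rho^2}
\end{equation*}
at every zero of $G$. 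In particular $\abs{R'}$ is constant along each of the two circles of zeros, so it suffices to treat the moduli $\rho_1$ and $\rho_2$.

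It then remains to insert the bounds on $\rho_1$ and $\rho_2$. For the inner circle, Lemma~\ref{lem:num_zeros_G} gives $\rho_1 < \sqrt{\eps}$, hence $(n-1)\rho_1^2 < n\eps$ and the numerator equals $n\eps - (n-1)\rho_1^2$; the inequality $\abs{R'} > 1$ is then equivalent to $\eps > \rho_1^2$, which holds. Hence $G$ is sense-preserving at the zeros $\rho_1 e^{i(2k+1)\pi/n}$. For the middle circle, Lemma~\ref{lem:loc_zeros_G} gives $\sqrt{\eps} < \rho_2 < \eta\sqrt{\eps}$, i.e.\ $\eps < \rho_2^2 < \tfrac{n}{n-1}\eps$; the upper bound forces $(n-1)\rho_2^2 < n\eps$, so the numerator again equals $n\eps - (n-1)\rho_2^2$, and $\abs{R'} < 1$ is equivalent to $\eps < \rho_2^2$, which holds. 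Hence $G$ is sense-reversing at the zeros $\rho_2 e^{i 2k\pi/n}$.

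I expect no serious obstacle: once the identity $w^2 R'(w) = (n-1)\abs{w}^2 - n\eps$ is in hand, the argument is a two-line sign check. The only point requiring care is that the \emph{sharper} upper bound $\rho_2 < \eta\sqrt{\eps}$ from Lemma~\ref{lem:loc_zeros_G} (rather than the weaker $\rho_2 < (\tfrac{2}{nc})^{1/(n-2)}$ available from Lemma~\ref{lem:num_zeros_G}) is exactly what guarantees $(n-1)\rho_2^2 < n\eps$ and thereby pins down the sense on the middle circle; this retroactively explains the particular choice of $\eta = (\tfrac{n}{n-1})^{1/2}$.
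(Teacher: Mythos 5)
Your proof is correct and follows essentially the same route as the paper's: both identify the analytic part $R_G(w)=cw^{n-1}+\tfrac{\eps}{w}$, use the zero equation $cw^n=\abs{w}^2-\eps$ to reduce $R_G'$ at a zero to $\tfrac{1}{w^2}\bigl((n-1)\abs{w}^2-n\eps\bigr)$, and then apply the bounds $\rho_1^2<\eps<\rho_2^2<\tfrac{n}{n-1}\eps$ from Lemmas~\ref{lem:num_zeros_G} and~\ref{lem:loc_zeros_G} to compare $\abs{R_G'}$ with $1$. Your closing observation about the role of the sharper bound $\rho_2<\eta\sqrt{\eps}$ is accurate and consistent with the paper's argument.
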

\begin{proof}
We verify directly Definition~\ref{defn:sense-pres-rev} for both types of zeros.
For ease of notation we denote the analytic part of $G(w)$ by
$R_G(w) \coloneq c w^{n-1} + \tfrac{\eps}{w}$.
We have
\begin{equation*}
R_G'(w) = (n-1) c w^{n-2} - \tfrac{\eps}{w^2} 
= \tfrac{1}{w^2} ( (n-1) c w^n - \eps ).
\end{equation*}
Any zero $w'$ of $G(w)$ is a solution of~\eqref{eqn:simplified_local_eqn},
thus satisfying $c (w')^n = \abs{w'}^2 - \eps$, so that
\begin{equation*}
R_G'(w') = \tfrac{1}{ (w')^2 } ( (n-1) \abs{w'}^2 - (n-1) \eps - \eps )
= \tfrac{1}{ (w')^2 } ( (n-1) \abs{w'}^2 - n \eps )
\end{equation*}
and
\begin{equation*}
\abs{ R_G'(w') } = \abs{ (n-1) - n \tfrac{\eps}{\abs{w'}^2} }.
\end{equation*}
Since $\eps$ satisfies~\eqref{eqn:eps_cond_sharper_bound}, we
have $\rho_1^2 < \eps < \rho_2^2 < \tfrac{n}{n-1} \eps$, see
Lemma~\ref{lem:loc_zeros_G}, which implies
\begin{equation*}
(n-1) - n \tfrac{\eps}{\rho_1^2} < (n-1) - n \tfrac{\eps}{\rho_2^2}
< (n-1) - n \tfrac{n-1}{n} = 0.
\end{equation*}
Then
\begin{equation*}
\abs{ R_G'(\rho_1 e^{i \frac{(2 k + 1) \pi}{n}} ) }
= n \tfrac{\eps}{\rho_1^2} - (n-1) > n - (n-1) = 1,
\end{equation*}
which shows that $G(w)$ is sense-preserving at the zeros
$\rho_1 e^{i \frac{(2k+1) \pi}{n}}$, and
\begin{equation*}
\abs{ R_G'(\rho_2 e^{i \frac{2k \pi}{n}}) }
= n \tfrac{\eps}{\rho_2^2} - (n-1) < n - (n-1) = 1,
\end{equation*}
which shows that $G(w)$ is sense-reversing at the zeros $\rho_2 e^{i \frac{2k
\pi}{n}}$.
\eop
\end{proof}

The preceding lemma concludes the discussion of the zeros of $G(w)$.
In the following main result of this section we prove \textit{(i)} of
Theorem~\ref{thm:pert_new_zeros}.  We will show that the zeros $\rho_1
e^{i \frac{(2k+1) \pi}{n}}$ and $\rho_2 e^{i \frac{2k \pi}{n}}$ of
$G(w)$ give rise to zeros of $F(w)$.

\begin{thm}
\label{thm:F_has_2n_zeros}
Let $n \geq 3$ and $c > 0$ and set $\eta \coloneq
(\tfrac{n}{n-1})^{\frac{1}{2}}$.  Then, for all sufficiently small $\eps > 0$,
the function $F(w)$ in~\eqref{eqn:F_apart} has at least $n$ zeros in
$A(0, \eta^{-1} \sqrt{\eps}, \sqrt{\eps})$, at least $n$ zeros in 
$A(0, \sqrt{\eps}, \eta\sqrt{\eps})$, 
and no zeros in $\overline{D}(0, \eta^{-1} \sqrt{\eps})$.
If $F(w)$ is regular, then at least $n$ of its zeros in
$A(0, \eta^{-1} \sqrt{\eps}, \sqrt{\eps})$ are sense-preserving and at least $n$
of its zeros in $A(0, \sqrt{\eps}, \eta \sqrt{\eps})$ are sense-reversing.
\end{thm}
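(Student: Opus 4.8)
The plan is to clear the pole by passing to the continuous function $\phi(w) \coloneq w\, F(w)$. From \eqref{eqn:F_apart} we get $\phi(w) = \Psi(w) + w\,h(w)$, where $\Psi(w) \coloneq c w^n + \eps - \abs{w}^2 = w\,G(w)$ is the model function governed by \eqref{eqn:defn_G}–\eqref{eqn:simplified_local_eqn}, and $h(w) \coloneq \sum_{k \geq n} \tfrac{R^{(k)}(0)}{k!} w^k$; on a fixed closed disk around $0$ one has $\abs{w\,h(w)} \leq C \abs{w}^{n+1}$ for a constant $C>0$. Since $\phi(0) = \eps \neq 0$, the zeros of $\phi$ are exactly the zeros of $F$ in $\C \setminus \{0\}$, and dividing by the analytic nonvanishing factor $w$ leaves windings (hence Poincar\'e indices) unchanged along closed curves avoiding $0$; thus $F$ and $\phi$ have the same zeros with the same indices there. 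Likewise the zeros of $\Psi$ are precisely the $3n$ zeros of $G$ from Lemma~\ref{lem:num_zeros_G}. The whole argument rests on one soft observation: on each contour I use, $\abs{w\,h(w)} = O(\eps^{(n+1)/2})$, which will be strictly dominated by $\abs{\Psi}$, so Theorem~\ref{thm:WindingRouche} transfers windings from $\Psi$ to $\phi$.

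First I dispose of the disk $\overline{D}(0, \eta^{-1}\sqrt{\eps})$. For $\abs{w} \leq \eta^{-1}\sqrt{\eps} < \sqrt{\eps}$ the number $\eps - \abs{w}^2$ is positive, so $\abs{\Psi(w)} \geq (\eps - \abs{w}^2) - c\abs{w}^n = -f_-(\abs{w})$, with $f_-(\rho) = c\rho^n + \rho^2 - \eps$ as in the proof of Lemma~\ref{lem:loc_zeros_G}. Because $f_-$ is increasing and $f_-(\eta^{-1}\sqrt{\eps}) < 0$ (exactly the bound that forces $\rho_1 > \eta^{-1}\sqrt{\eps}$), one gets $-f_-(\abs{w}) \geq \tfrac{\eps}{n} - c\eta^{-n}\eps^{n/2}$, which is of order $\eps$ and hence beats $\abs{w\,h(w)} = O(\eps^{(n+1)/2})$ for small $\eps$. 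Thus $\phi = \Psi + w\,h$ is nonzero on $\overline{D}(0,\eta^{-1}\sqrt{\eps}) \setminus \{0\}$, and $\phi(0)=\eps\neq 0$, so $F$ has no zeros in $\overline{D}(0,\eta^{-1}\sqrt{\eps})$.

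For the lower bounds on the number of zeros I would \emph{not} try to isolate each zero of $\Psi$ by a small circle (see the final paragraph for why). Instead I cut $A(0,\eta^{-1}\sqrt{\eps},\sqrt{\eps})$ into $n$ congruent closed sectors $S_k$, $1 \leq k \leq n$, bounded by the arcs $\abs{w} = \eta^{-1}\sqrt{\eps}$ and $\abs{w} = \sqrt{\eps}$ and the radial segments $\arg w = \tfrac{2k\pi}{n}$ and $\arg w = \tfrac{(2k+2)\pi}{n}$, so that each $S_k$ contains in its interior exactly one zero $\rho_1 e^{i(2k+1)\pi/n}$ of $\Psi$. On the radial edges $w^n = \abs{w}^n > 0$, so $\Psi = c\abs{w}^n + (\eps - \abs{w}^2) > 0$ is real and positive; on the outer arc $\Psi = c w^n$ has modulus $c\eps^{n/2}$; and on the inner arc $\abs{\Psi} \geq \tfrac{\eps}{n} - c\eta^{-n}\eps^{n/2}$. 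Hence $\abs{\Psi} \gtrsim \eps^{n/2}$ on all of $\partial S_k$, dominating $\abs{w\,h(w)} = O(\eps^{(n+1)/2})$ for small $\eps$. By Theorem~\ref{thm:WindingRouche}, $V(\phi;\partial S_k) = V(\Psi;\partial S_k)$, and by Theorem~\ref{thm:arg_principle} the latter equals the Poincar\'e index of the single enclosed zero, namely $+1$: that zero is sense-preserving for $G$ by Lemma~\ref{lem:sense_zeros_G}, hence has index $+1$ by Proposition~\ref{prop:P_index_for_zeros_poles}, and indices of $\Psi = wG$ agree with those of $G$ away from $0$. A nonzero winding forces $\phi$, and thus $F$, to have a zero in the interior of each $S_k$; the $n$ disjoint sectors yield at least $n$ zeros in $A(0,\eta^{-1}\sqrt{\eps},\sqrt{\eps})$. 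The same construction on $A(0,\sqrt{\eps},\eta\sqrt{\eps})$, with sectors centred on the zeros $\rho_2 e^{i 2k\pi/n}$ and radial edges at the odd multiples of $\tfrac{\pi}{n}$ (where $\Psi$ is again real of constant sign and $\gtrsim \eps^{n/2}$), gives at least $n$ zeros, each sector now carrying winding $-1$.

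Finally, concatenating the sector boundaries so that the shared radial edges cancel shows that $\sum_k V(\phi;\partial S_k)$ is the winding of $\phi$ over the full annulus, so Theorem~\ref{thm:arg_principle} gives index-sum $+n$ for the zeros of $F$ in the inner annulus and $-n$ in the outer one. If $F$ is regular, each such zero has index $\pm 1$ (Proposition~\ref{prop:P_index_for_zeros_poles}); writing the inner index-sum as $(\text{number sense-preserving}) - (\text{number sense-reversing}) = n$ forces at least $n$ sense-preserving zeros there, and symmetrically at least $n$ sense-reversing zeros in the outer annulus. I expect the main obstacle to be precisely what rules out the naive approach: the relevant zeros of $\Psi$ sit at radius $\approx \sqrt{\eps}$ and are only weakly nondegenerate (the two first-order Wirtinger coefficients of $\Psi$ there have moduli differing by only $O(\eps^{(n-1)/2})$), so the smallest value of $\abs{\Psi}$ on any small circle isolating one such zero is of order $\eps^{n-1}$, which for $n \geq 3$ does not exceed the tail $\abs{w\,h} = O(\eps^{(n+1)/2})$; a circle-wise Rouch\'e comparison is therefore inconclusive. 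The sector contours evade this because their radial edges pass through the angular gaps between consecutive zeros of $\Psi$, where $\Psi$ is real of one sign with modulus $\gtrsim \eps^{n/2}$, comfortably above the perturbation.
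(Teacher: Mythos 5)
Your proposal is correct and follows essentially the same route as the paper: you compare $F$ with the truncated model on exactly the same annular-sector contours, whose radial edges pass between the zeros of the model where it is real with modulus of order $\eps^{n/2}$, and you transfer the windings $\pm 1$ via Theorem~\ref{thm:WindingRouche}; multiplying through by $w$ to form $\phi = wF$ and $\Psi = wG$ is only a cosmetic repackaging of the paper's own estimates $\abs{G(w)} = \abs{w}^{-1}\abs{cw^n + \eps - \abs{w}^2}$. The one genuine (and slightly simpler) divergence is your treatment of $\overline{D}(0,\eta^{-1}\sqrt{\eps})$: you exclude zeros by the direct bound $\abs{\Psi} \geq \tfrac{\eps}{n} - O(\eps^{n/2})$ on the whole disk, whereas the paper applies Rouch\'e on the inner circle to obtain winding $-1$ and then combines the sense-preserving estimate $\abs{R_F'(w)} \geq \eta^2 - \abs{R'(w)} > 1$ with the Poincar\'e index $-1$ of the simple pole at $0$.
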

\begin{proof}
Let $\eps$ satisfy~\eqref{eqn:eps_cond_sharper_bound}, so that in
particular $G(w)$ in~\eqref{eqn:defn_G} has $3n$ zeros.  We will show
that the $2n$ zeros of $G(w)$ discussed in
Lemma~\ref{lem:sense_zeros_G} give rise to $2n$ zeros of $F(w)$.

Consider a sense-preserving zero $w_+ = \rho_1 e^{i \frac{(2k+1) \pi}{n}}$ of
$G(w)$.  From Lemma~\ref{lem:loc_zeros_G} we have $\eta^{-1} \sqrt{\eps}
< \abs{w_+} < \sqrt{\eps}$.  In particular, $w_+$ is the \emph{only}
exceptional point of $G(w)$ in the annular sector defined by the two radii
$\eta^{-1} \sqrt{\eps}$ and $\sqrt{\eps}$, and the two half-lines
$\arg(w) = \tfrac{2k \pi}{n}$ and $\arg(w) = \tfrac{(2k+2) \pi}{n}$;
see Figure~\ref{fig:annular_sector}.  Let $\Gamma^+ = [\Gamma^+_1,
\Gamma^+_2, \Gamma^+_3, \Gamma^+_4]$ be the boundary curve of this
annular sector as indicated in Figure~\ref{fig:annular_sector}.  Since $G(w)$ is
sense-preserving at $w_+$ (see Lemma~\ref{lem:sense_zeros_G}), we know that
$V(G; \Gamma^+) = +1$ and we will show next that $V(F; \Gamma^+) = +1$.

In order to apply Rouch\'e's theorem
(Theorem~\ref{thm:WindingRouche}), we will show that
\begin{equation}
\label{eq:rouche_goal}
    \abs{F(w) - G(w)} < \abs{F(w)} + \abs{G(w)}, \quad w \in \Gamma^+.
\end{equation}
From~\eqref{eqn:F_apart} and~\eqref{eqn:defn_G}, we see that inside a disk
around $w = 0$ contained in the domain of convergence of the series of $R(w)$
we have $\abs{F(w) - G(w)} \leq M\abs{w}^n$, for some $M>0$ independent of
$\eps$.  Hence, for sufficiently small $\eps>0$, $\Gamma^+$ is inside
this disk.  Thus, it suffices to show that $\abs{G(w)} > M \abs{w}^n$
on the arcs that compose $\Gamma^+$ (trivially, $\abs{F(w)} +
\abs{G(w)} \ge \abs{G(w)}$).

For $w \in \Gamma^+_1$ we have $\abs{w} = \sqrt{\eps}$, so that for sufficiently small 
$\eps>0$,
\begin{equation}
\label{eq:sqrteps_circ}
    \abs{G(w)}
    = \abs{w}^{-1} \abs{cw^n + \eps - \abs{w}^2}
    = c \sqrt{\eps}^{n-1}
    > M \sqrt{\eps}^n
    = M \abs{w}^n.
\end{equation}
For $w \in \Gamma^+_2$ or $w \in \Gamma^+_4$ we have $w^n = \abs{w}^n$
and $\eta^{-1} \sqrt{\eps} \leq \abs{w} \leq \sqrt{\eps}$.
If $\eps>0$ is sufficiently small, then
\begin{equation*}
\begin{split}
    \abs{G(w)}
    & = \abs{w}^{-1} \abs{cw^n + \eps - \abs{w}^2}
    = \abs{w}^{-1} (c \abs{w}^n + \eps -  \abs{w}^2)
    \ge c \abs{w}^{n-1}\\
    & \ge c \eta^{-(n-1)} \sqrt{\eps}^{n-1}
    > M \sqrt{\eps}^n
    \ge M \abs{w}^n.
\end{split}
\end{equation*}
For $w \in \Gamma^+_3$ we have $\abs{w} = \eta^{-1} \sqrt{\eps}$, so that for
sufficiently small $\eps>0$,
\begin{equation}
\label{eq:inner_circ}
\begin{split}
    \abs{G(w)}
    & = \abs{w}^{-1} \abs{cw^n + \eps - \abs{w}^2}
    = \abs{w}^{-1} \abs{c w^n + (1 - \eta^{-2})\eps}\\
    & \ge \abs{w}^{-1} ( (1 - \eta^{-2}) \eps - c \abs{w}^n )
    \ge \tfrac{1 - \eta^{-2}}{\eta^{-1}} \sqrt{\eps} - c \eta^{-(n-1)}
        \sqrt{\eps}^{n-1}\\
    & > M \eta^{-n} \sqrt{\eps}^n = M \abs{w}^n.
\end{split}
\end{equation}
Hence for sufficiently small $\eps>0$ we find
that~\eqref{eq:rouche_goal} is satisfied on $\Gamma^+$ and thus $V(F;
\Gamma^+) = 1$, so $F(w)$ has at least one zero inside this sector (by
the argument principle).  If $F(w)$ is regular, this zero is
sense-preserving.  Since $G(w)$ has $n$ zeros of type $w_+$,
$F(w)$ has at least $n$ such zeros in the annulus
$\annu(0, \eta^{-1} \sqrt{\eps}, \sqrt{\eps})$.

We can use exactly the same reasoning as above to show that the
sense-reversing zeros $\rho_2 e^{i \frac{2k \pi}{n}}$ of $G(w)$ give
zeros of $F(w)$.  From Lemma~\ref{lem:num_zeros_G} and
Lemma~\ref{lem:loc_zeros_G} we see that $G(w)$ has $n$ such zeros
inside $\annu(0, \sqrt{\eps}, \eta \sqrt{\eps})$.  Now,
fix $w_- = \rho_2 e^{i \frac{2k \pi}{n}}$.  Consider the boundary
curve $\Gamma^- = [\Gamma^-_1,\Gamma^-_2, \Gamma^-_3, \Gamma^-_4]$ of
the annular sector defined by the radii $\sqrt{\eps}$ and $\eta
\sqrt{\eps}$ and the half-lines $\arg(w) = \tfrac{(2k-1) \pi}{n}$ and
$\arg(w) = \tfrac{(2k+1) \pi}{n}$ (again, see
Figure~\ref{fig:annular_sector}).  As before, we show that $\abs{G(w)} > M
\abs{w}^n$ on $\Gamma^-$.

The arc $\Gamma^-_3$ has been treated already
in~\eqref{eq:sqrteps_circ}.  For $w \in \Gamma^-_1$ we have $\abs{w} =
\eta \sqrt{\eps}$, so that for sufficiently small $\eps>0$,
\begin{equation}
\label{eq:outer_circ}
\begin{split}
    \abs{G(w)}
    & = \abs{w}^{-1} \abs{cw^n + \eps - \abs{w}^2}
    = \abs{w}^{-1} \abs{c w^n + (1 - \eta^2)\eps}\\
    & \ge \abs{w}^{-1} ( (\eta^2 - 1)\eps - c \abs{w}^n )
    \ge \tfrac{\eta^2 - 1}{\eta} \sqrt{\eps} - c \eta^{n-1}
        \sqrt{\eps}^{n-1}\\
        & > M \eta^n \sqrt{\eps}^n = M \abs{w}^n.
\end{split}
\end{equation}
For $w \in \Gamma^-_2$ or $w \in \Gamma_4^-$ we have $w^n = - \abs{w}^n$.
Using that $\sqrt{\eps} \le \abs{w} \le \eta \sqrt{\eps}$ we compute,
for a sufficiently small $\eps>0$,
\begin{equation*}
\begin{split}
    \abs{G(w)}
    & = \abs{w}^{-1} \abs{cw^n + \eps - \abs{w}^2}
    = \abs{w}^{-1} \abs{- c \abs{w}^n + \eps - \abs{w}^2}\\
    & = \abs{w}^{-1} (c \abs{w}^n + \abs{w}^2 - \eps)
    \ge c \abs{w}^{n-1}
    \ge c \sqrt{\eps}^{n-1}
    > M \eta^n \sqrt{\eps}^n
    \ge M \abs{w}^n.
\end{split}
\end{equation*}
As before we can now conclude that $V(F; \Gamma^-) = -1$, so $F(w)$
has at least one zero inside $\Gamma^-$, which is sense-reversing if
$F(w)$ is regular.  In total, $n$ such zeros exist. 

In order to complete the proof, let $\gamma$ denote the
circle $\abs{w} =\eta^{-1} \sqrt{\eps}$. From
the computation~\eqref{eq:inner_circ} we see that Rouch\'e's theorem
applies to $F(w)$ and $G(w)$ on $\gamma$.  Since $G(w)$ has a simple
pole and no zeros inside $\gamma$, we obtain $V(F; \gamma) =
V(G; \gamma) = -1$.

Let $R_F(w) \coloneq R(w) + \tfrac{\eps}{w}$.
For $0 \neq w \in \cdisk(0, \eta^{-1} \sqrt{\eps})$ we compute
\begin{equation*}
\abs{R_F'(w)}
= \abs{R'(w) - \tfrac{\eps}{w^2}}
\ge \tfrac{\eps}{\abs{w}^2} - \abs{R'(w)}
\ge \eta^2 - \abs{R'(w)} > 1
\end{equation*}
for sufficiently small $\eps>0$, since $R'(0) = 0$.  This shows that
$F(w)$ is sense-preserving on the disk $\cdisk(0,
\eta^{-1} \sqrt{\eps})$.  Since $0$ is a simple pole of $F(w)$ with Poincar\'{e}
index $-1 = V(F; \gamma)$, there are no zeros of $F(w)$ in this disk.
\eop
\end{proof}

\begin{figure}[t]
\subfigure[]{%
\label{fig:annular_sector}%
\includegraphics[width=.54\textwidth]{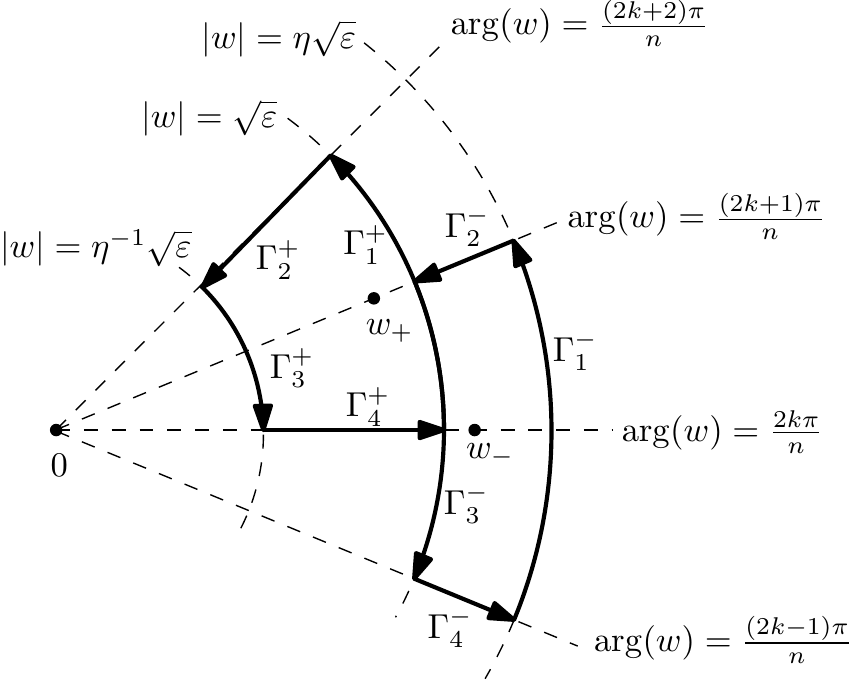}
}
\hfill
\subfigure[$\gamma$ denotes the circle $\abs{z-z_0} = \eta \sqrt{\eps}$]{%
\label{fig:discs}%
\includegraphics[width=.44\textwidth]{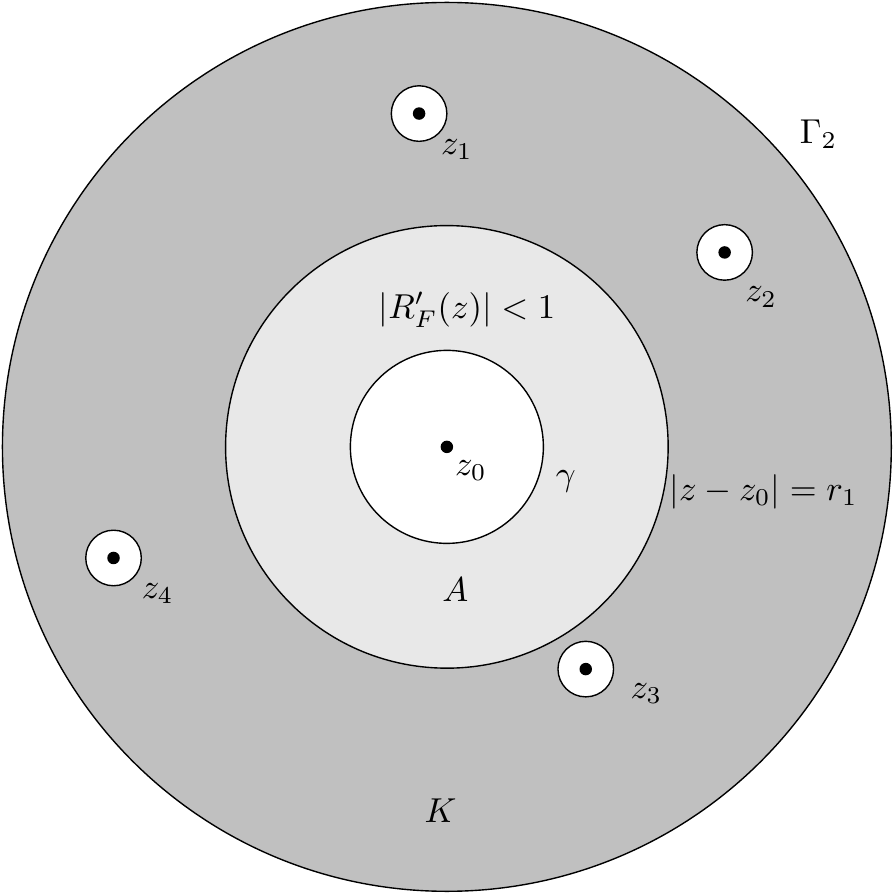}
}
\caption{Illustrations for the proofs of Theorems~\ref{thm:F_has_2n_zeros}
(left) and~\ref{thm:F_has_no_other_zeros} (right).}
\end{figure}

\begin{rem}
\label{rem:not_sharp}
Theorem~\ref{thm:F_has_2n_zeros} gives a lower bound for $2n$ additional
zeros close to $0$.  However, the proof does not show that there are
\emph{exactly} $2n$ such zeros, because the sector enclosed by
$\Gamma^+$ may contain an open region where $\abs{R_F'(w)} < 1$.  In that
case there may exist additional pairs of zeros inside this sector so
that the total winding of $+1$ is maintained (similarly for
$\Gamma^-$).
We suspect that for sufficiently small $\eps>0$ exactly $2n$ additional zeros
occur; but see also Section~\ref{sec:example_smallder}, where more than $2n$
additional zeros occur for somewhat larger $\eps$.
\end{rem}
\medskip

We end this section with a result that follows easily from the proof of
Theorem~\ref{thm:F_has_2n_zeros} and that will be helpful in the proof of 
Theorem~\ref{thm:F_has_no_other_zeros} below.

\begin{cor}\label{cor:windF}
Under the assumptions of Theorem~\ref{thm:F_has_2n_zeros}, we have
$V(F; \gamma) = -1$, where $\gamma$ is the circle $\abs{w} = \eta \sqrt{\eps}$.
\end{cor}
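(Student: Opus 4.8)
The plan is to derive the corollary directly from the estimates already established in the proof of Theorem~\ref{thm:F_has_2n_zeros}, combining Rouch\'e's theorem with the abstract argument principle (Theorem~\ref{thm:arg_principle}). The crucial observation is that the chain of inequalities in~\eqref{eq:outer_circ} was written down only for the arc $\Gamma^-_1$, but in fact every step there depends solely on the modulus condition $\abs{w} = \eta\sqrt{\eps}$ together with the reverse triangle inequality, and not on the argument of $w$. Hence the same estimate $\abs{G(w)} > M \abs{w}^n$ holds on the \emph{entire} circle $\gamma$ of radius $\eta\sqrt{\eps}$. Since $\abs{F(w) - G(w)} \leq M \abs{w}^n$ on $\gamma$ (for $\eps$ small enough that $\gamma$ lies in the disk of convergence of the series of $R(w)$), condition~\eqref{eq:rouche_goal} is satisfied on all of $\gamma$, so Theorem~\ref{thm:WindingRouche} yields $V(F; \gamma) = V(G; \gamma)$.

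It then remains to compute $V(G; \gamma)$ via the argument principle, for which I would first locate the exceptional points of $G(w)$ lying inside $\gamma$. By Lemma~\ref{lem:loc_zeros_G} the $n$ zeros $\rho_1 e^{i \frac{(2k+1)\pi}{n}}$ satisfy $\rho_1 < \sqrt{\eps} < \eta\sqrt{\eps}$ and the $n$ zeros $\rho_2 e^{i \frac{2k\pi}{n}}$ satisfy $\rho_2 < \eta\sqrt{\eps}$, so all $2n$ of these zeros lie strictly inside $\gamma$, together with the simple pole of $G$ at $w=0$. The remaining $n$ zeros $\rho_3 e^{i \frac{2k\pi}{n}}$ satisfy $\rho_3 > \big(\tfrac{2}{nc}\big)^{\frac{1}{n-2}}$ by Lemma~\ref{lem:num_zeros_G}; since $\eta\sqrt{\eps}\to 0$ as $\eps \to 0$ while this lower bound is fixed, for sufficiently small $\eps$ these zeros lie outside $\gamma$ and do not contribute to the winding.

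Finally I would sum the Poincar\'e indices. By Lemma~\ref{lem:sense_zeros_G} and Proposition~\ref{prop:P_index_for_zeros_poles}, each of the $n$ sense-preserving zeros $\rho_1 e^{i \frac{(2k+1)\pi}{n}}$ has index $+1$, each of the $n$ sense-reversing zeros $\rho_2 e^{i \frac{2k\pi}{n}}$ has index $-1$, and the simple pole at $w=0$ has index $-1$. (Proposition~\ref{prop:P_index_for_zeros_poles} applies here because $G(w) = R_G(w) - \conj{w}$ with $R_G(w) = c w^{n-1} + \tfrac{\eps}{w}$ rational of degree $n-1 \geq 2$.) Theorem~\ref{thm:arg_principle} then gives $V(G; \gamma) = n \cdot (+1) + n \cdot (-1) + (-1) = -1$, and combined with the Rouch\'e identity above this yields $V(F; \gamma) = -1$. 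I expect the only genuinely new point to verify — and thus the main, albeit minor, obstacle — is the claim that the estimate~\eqref{eq:outer_circ} extends from the arc $\Gamma^-_1$ to the full circle $\gamma$, and the exclusion of the $\rho_3$-zeros for small $\eps$; all the remaining ingredients are already supplied by the three preceding lemmata.
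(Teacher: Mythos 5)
Your proposal is correct and takes essentially the same approach as the paper, which likewise observes that the estimate~\eqref{eq:outer_circ} applies on the full circle $\abs{w}=\eta\sqrt{\eps}$ so that Rouch\'e's theorem gives $V(F;\gamma)=V(G;\gamma)$, and then counts the simple pole together with the $n$ sense-preserving and $n$ sense-reversing zeros of $G(w)$ inside $\gamma$ to conclude $V(G;\gamma)=-1$. One trivial slip: $\deg(R_G)=n$ rather than $n-1$, since $R_G(w)=(cw^n+\eps)/w$ with coprime numerator and denominator; this does not affect the argument, as $n\geq 3\geq 2$ in any case.
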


\begin{proof}
From~\eqref{eq:outer_circ} we see that Rouch\'e's theorem
(Theorem~\ref{thm:WindingRouche}) applies to
$F(w)$ and $G(w)$ on the circle $\gamma$.  But $G(w)$ has exactly
one simple pole, $n$ sense-preserving and $n$ sense-reversing zeros
inside $\gamma$, so $V(F; \gamma) = V(G; \gamma) = -1$.
\eop 
\end{proof}

\subsection{Behavior away from $z_0$ (Proof of \textit{(ii)} and \textit{(iii)}
in Theorem~\ref{thm:pert_new_zeros})}
\label{sec:away_z0}

In the previous section we substituted $w = z-z_0$ because it simplified 
the notation. This is no longer necessary, so we work in the $z$ variable again.
We will first prove \textit{(ii)} of Theorem~\ref{thm:pert_new_zeros}, but for
a slightly more general setting, where we do not require $f(z_0) = 0$.  This
setting will be needed in the proof of Theorem~\ref{thm:other_z0}.

\begin{thm}
\label{thm:pert_local}
Let $f(z) = R(z) - \conj{z}$ with $\deg(R) \geq 2$, and $z_0 \in \C$.  Let
$z_1, \ldots, z_N \in \C$ be the regular zeros of
$f(z)$, except, possibly, $z_0$.
Then there exist mutually disjoint disks $\cdisk(z_k,r)$ not containing $z_0$
with the following property:
For all sufficiently small $\eps>0$ the function
\begin{equation*}
F(z) = f(z) + \tfrac{\eps}{z-z_0}
\end{equation*}
has exactly one zero in $\disk(z_k,r)$, and the index of this zero is
$\ind(z_k; f)$.
\end{thm}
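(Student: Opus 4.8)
The plan is to treat $F$ as a small perturbation of $f$ away from $z_0$ and to localize each regular zero $z_k$ inside a small, fixed disk on which both the perturbation $\tfrac{\eps}{z-z_0}$ and its derivative are uniformly tiny. The net winding around $z_k$ will be preserved by Rouch\'e's theorem, and a separate \emph{sense} argument will upgrade the conclusion from ``net index $\pm1$'' to ``exactly one zero''. First I would fix the disks. Since each $z_k$ is a regular zero, Proposition~\ref{prop:P_index_for_zeros_poles} gives $\ind(z_k;f)=\pm1$, with the sign determined by whether $\abs{R'(z_k)}>1$ or $\abs{R'(z_k)}<1$. The points $z_1,\ldots,z_N$, the poles of $R$, and $z_0$ form a finite set of distinct points, and regular zeros are isolated, so by continuity of $R'$ I can choose $r>0$ so small that the closed disks $\cdisk(z_k,r)$ are mutually disjoint, contain no pole of $R$ and no zero of $f$ other than $z_k$, omit $z_0$, and satisfy $\abs{R'(z)}>1$ on $\cdisk(z_k,r)$ when $z_k$ is sense-preserving (respectively $\abs{R'(z)}<1$ when $z_k$ is sense-reversing). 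On each boundary circle $f$ is then zero-free, so $\delta_k\coloneq\min_{\abs{z-z_k}=r}\abs{f(z)}>0$, and $d_k\coloneq\operatorname{dist}(z_k,z_0)-r>0$.

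Next comes the Rouch\'e step. On $\cdisk(z_k,r)$ one has $\abs{F(z)-f(z)}=\tfrac{\eps}{\abs{z-z_0}}\le\tfrac{\eps}{d_k}$, so for $\eps$ small enough this is smaller than $\delta_k\le\abs{f(z)}\le\abs{F(z)}+\abs{f(z)}$ on $\partial\disk(z_k,r)$. Applying Theorem~\ref{thm:WindingRouche} to $F$ and $-f$ (and using $V(-f;\cdot)=V(f;\cdot)$ via Proposition~\ref{prop:properties_winding}) yields $V(F;\partial\disk(z_k,r))=V(f;\partial\disk(z_k,r))$. Since $z_k$ is the only exceptional point of $f$ inside the disk, the argument principle (Theorem~\ref{thm:arg_principle}) shows the right-hand side equals $\ind(z_k;f)=\pm1$.

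The final step, which I expect to be the main obstacle, is to convert this net winding into an exact zero count, since Rouch\'e alone cannot exclude several zeros whose indices cancel. Here I would exploit that the analytic part of $F$ is $R_F(z)=R(z)+\tfrac{\eps}{z-z_0}$, with $R_F'(z)=R'(z)-\tfrac{\eps}{(z-z_0)^2}$, and that $\bigl\lvert\tfrac{\eps}{(z-z_0)^2}\bigr\rvert\le\tfrac{\eps}{d_k^2}$ on the disk. Thus for $\eps$ small the strict inequality $\abs{R_F'(z)}>1$ holds throughout $\cdisk(z_k,r)$ at a sense-preserving $z_k$ (respectively $\abs{R_F'(z)}<1$ at a sense-reversing one), so $F$ is sense-preserving (respectively sense-reversing) at \emph{every} point of the disk, in particular at each of its zeros there. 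As $F$ is again of the form $\widetilde R-\conj{z}$ with $\deg(\widetilde R)\ge2$, its exceptional points are isolated; moreover $F$ has no poles in the disk (the poles of $R$ and $z_0$ were excluded). Hence every exceptional point of $F$ inside is a zero, all of index $+1$ (respectively all $-1$) by Proposition~\ref{prop:P_index_for_zeros_poles}. Because these indices share a single sign, the argument principle forces their number to equal $\abs{V(F;\partial\disk(z_k,r))}=1$, so $F$ has exactly one zero in $\disk(z_k,r)$, of index $\ind(z_k;f)$. Finally, taking the smallest of the finitely many thresholds on $\eps$ over $k=1,\ldots,N$ makes the conclusion hold simultaneously for all the disks.
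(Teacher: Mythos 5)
Your proposal is correct and follows essentially the same route as the paper: choose pairwise disjoint closed disks avoiding $z_0$, the poles, and the other zeros, on which $\abs{R'}$ stays on one side of $1$; apply Rouch\'e's theorem (Theorem~\ref{thm:WindingRouche}) to $F$ and $f$ on each boundary circle to preserve the winding $\pm1$; and then use the bound on $\abs{R_F'(z)} = \abs{R'(z) - \tfrac{\eps}{(z-z_0)^2}}$ to force all zeros of $F$ in the disk to have the same index, so that the argument principle upgrades the net winding to an exact count of one. The only cosmetic difference is that the paper explicitly lists \emph{all} zeros of $f$ (including possible non-regular ones) when carving out the disks, whereas you appeal to isolation; since $f$ has finitely many zeros this amounts to the same thing.
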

\begin{proof}
Let $z_1, \ldots, z_M$, $M \geq N$, be all zeros of $f(z)$, except, possibly,
$z_0$. Choose $r > 0$ with the following properties:
\begin{compactenum}
\item The disks $\cdisk(z_k,r)$ do not intersect ($k = 1, \ldots, M$)
and do neither contain $z_0$ nor the poles of $R(z)$.
\item If $z_k$ is a regular zero, $f(z)$ is either sense-preserving or
sense-reversing on $\cdisk(z_k,r)$.
\end{compactenum}
Fix a regular zero $z_k$, $1 \leq k \leq N$.  Let $\Gamma$ denote the circle
around $z_k$
with radius $r$.  By construction, the continuous function $z \mapsto \abs{f(z)
(z-z_0)}$ admits a positive minimum on the compact set $\Gamma$.  For any
\begin{equation*}
0 < \eps < \tfrac{1}{2} \min_{z \in \Gamma} \abs{ f(z) (z-z_0) }
\end{equation*}
we then have
\begin{equation*}
\abs{ F(z) } = \abs{ f(z) + \tfrac{\eps}{z-z_0} } \geq \abs{f(z)} - \abs{
\tfrac{\eps}{z-z_0} } > \tfrac{\eps}{ \abs{z-z_0} }, \quad z \in \Gamma,
\end{equation*}
from which we find $\abs{F(z)-f(z)} < \abs{F(z)} + \abs{f(z)}$ for $z \in
\Gamma$.  By Rouch\'e's theorem (Theorem~\ref{thm:WindingRouche}) we have
$V(F; \Gamma) = V(-f; \Gamma) = \pm 1$, since $f(z)$ has
exactly one regular zero in the interior of $\Gamma$. Thus $F(z)$ also
has (at least) one zero in the interior of $\Gamma$ (by 
Theorem~\ref{thm:arg_principle}; see also the degree 
principle~\cite[Section~2.3.6]{Sheil-Small2002}
or~\cite[Theorem~2.3]{Balk1991}).

Write $R_F(z) = R(z) + \tfrac{\eps}{z-z_0}$, so that $F(z) = R_F(z) - \conj{z}$.
Now suppose that $f(z)$ is sense-preserving at $z_k$, so that
$\abs{R'(z)} > 1$ on $\cdisk(z_k,r)$.  Note that $R_F'(z) =
R'(z) - \tfrac{\eps}{(z-z_0)^2}$, so that $\abs{R_F'(z)} > 1$ on the disk
if $\eps$ is chosen sufficiently small. Therefore $F(z)$ is also
sense-preserving and its zeros have positive index ($+1$, see
Proposition~\ref{prop:P_index_for_zeros_poles}).  Then $V(F; \Gamma) = V(f;
\Gamma) = \ind(z_k; f) = +1$ shows that $F(z)$ has exactly one zero in this
disk.  A similar reasoning holds for sense-reversing zeros of $f(z)$.
\eop
\end{proof}

\begin{rem}
If all zeros of $f(z)$ have merely nonzero index, the first part of the proof
still shows that $F(z)$ has \emph{at least} one zero near every zero $z_k \neq z_0$ of
$f(z)$.
\end{rem}

\begin{lem} \label{lem:no_zero}
Let $f(z) = R(z) - \conj{z}$ with $\deg(R)\geq 2$.  Let $z_0, \zeta \in \C$ with
$z_0 \neq \zeta$ and $f(\zeta) \neq 0$.  Then there exists a neighborhood of
$\zeta$ in which, for all sufficiently small $\eps > 0$, the function
\begin{equation*}
F(z) = f(z) + \tfrac{\eps}{z-z_0}
\end{equation*}
has no zeros.
\end{lem}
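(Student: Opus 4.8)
The plan is to establish the claim by a direct pointwise lower bound on $\abs{F(z)}$, rather than by a winding or Rouch\'e argument, since here I only need to certify that $F(z)$ is nonvanishing on a \emph{fixed} neighborhood of $\zeta$. The guiding observation is that both competing terms are under control near $\zeta$: the unperturbed function $f$ is bounded away from $0$ there (because $f$ is continuous at $\zeta$ and $f(\zeta)\neq 0$), while the perturbation $\tfrac{\eps}{z-z_0}$ is uniformly small there (because $\zeta\neq z_0$, so $z-z_0$ stays bounded away from $0$).

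First I would fix the neighborhood, \emph{before} introducing $\eps$, as the statement requires. Since $\deg(R)\geq 2$ and $f(\zeta)\neq 0$, the point $\zeta$ is not a pole of $R$, so $f(z)=R(z)-\conj{z}$ is continuous at $\zeta$; hence there are a radius $\rho>0$ and a constant $\delta>0$ with $\abs{f(z)}\geq\delta$ for all $z\in\cdisk(\zeta,\rho)$. Shrinking $\rho$ if necessary, I would also arrange $z_0\notin\cdisk(\zeta,\rho)$, which is possible because $z_0\neq\zeta$; then $d\coloneq\min_{z\in\cdisk(\zeta,\rho)}\abs{z-z_0}>0$. Both constants $\delta,d$ depend only on $f$, $\zeta$ and $z_0$, not on $\eps$.

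It then remains to combine the two estimates. For every $z\in\cdisk(\zeta,\rho)$ the triangle inequality gives
\[
\abs{F(z)}=\abs{f(z)+\tfrac{\eps}{z-z_0}}\geq\abs{f(z)}-\tfrac{\eps}{\abs{z-z_0}}\geq\delta-\tfrac{\eps}{d},
\]
so choosing any $\eps$ with $0<\eps<\delta d$ makes the right-hand side strictly positive. Consequently $F(z)$ has no zeros in $\cdisk(\zeta,\rho)$, which is the asserted neighborhood of $\zeta$.

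There is essentially no genuine obstacle here; the only point deserving care is the tacit assumption that $\zeta$ is a regular point of $R$, which I read into the hypothesis $f(\zeta)\neq 0$. Should $\zeta$ instead be a pole of $R$, the conclusion is even more immediate: since $z_0\neq\zeta$, the point $\zeta$ remains a pole of $F(z)=R(z)+\tfrac{\eps}{z-z_0}-\conj{z}$, so $\abs{F(z)}\to\infty$ as $z\to\zeta$ and $F$ is nonzero on a suitably small punctured neighborhood of $\zeta$ (and undefined, hence not a zero, at $\zeta$ itself).
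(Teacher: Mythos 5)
Your proposal is correct and follows essentially the same route as the paper: both arguments fix a closed disk around $\zeta$ avoiding $z_0$ on which $f$ is continuous and nonzero, and then use the triangle inequality $\abs{F(z)}\geq\abs{f(z)}-\tfrac{\eps}{\abs{z-z_0}}$ to rule out zeros for $\eps$ below an explicit threshold (the paper packages your two constants $\delta$ and $d$ into the single quantity $\min_{z\in\cdisk(\zeta,r)}\abs{(z-z_0)f(z)}$). Your explicit treatment of the case where $\zeta$ is a pole of $R$ matches the paper's remark that in that case there is nothing to show.
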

\begin{proof}
Let us assume that $\zeta$ is not a pole of $R(z)$, else there is nothing to
show.  Then there exists $0 < r < \abs{z_0-\zeta}$ such that $f(z)$ is
continuous and nonzero on $\cdisk(\zeta, r)$.  For
\begin{equation*}
\eps < \min_{ z \in \cdisk(\zeta,r) } \abs{ (z-z_0) f(z)},
\end{equation*}
(the right hand side is positive by the choice of $r$), we see from
\begin{equation*}
\abs{F(z)} \geq \abs{f(z)} - \tfrac{\eps}{\abs{z-z_0}}
\end{equation*}
that $F(z)$ has no zeros in $\cdisk(\zeta,r)$.
\eop
\end{proof}

The following theorem completes the discussion of points ``away'' from
the perturbation point $z_0$.  Together with Theorem~\ref{thm:F_has_2n_zeros}
and Theorem~\ref{thm:pert_local}, it implies
Theorem~\ref{thm:pert_new_zeros}.

\begin{thm}
\label{thm:F_has_no_other_zeros}
In the notation and under the assumptions of Theorem~\ref{thm:pert_new_zeros},
let additionally $f(z)$ be regular. Then for sufficiently small $\eps > 0$, the
functions $F(z)$ and $f(z)$ have the same number of zeros outside $\cdisk(z_0,
\eta \sqrt{\eps})$.  More precisely:
\begin{compactenum}
\item If $z_1, \ldots, z_N$ are the zeros of $f(z)$ with $\abs{z_k - z_0} > \eta
\sqrt{\eps}$, then there exist mutually disjoint disks $\cdisk(z_k,r)$, $1 \leq
k \leq N$, such that $F(z)$ has exactly one zero in each $\disk(z_k,r)$,
and the index of this zero is $\ind(z_k; f)$.
\item $F(z)$ has no further zeros outside $\cdisk(z_0, \eta \sqrt{\eps})$.
\end{compactenum}
\end{thm}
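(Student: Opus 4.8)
The plan is to obtain the first of the two numbered claims directly from Theorem~\ref{thm:pert_local} and to reduce the second claim to showing that $F(z)$ is zero-free on three sets covering the region $\{\,\abs{z-z_0}>\eta\sqrt{\eps}\,\}$. The first claim is immediate: applying Theorem~\ref{thm:pert_local} to the zeros $z_1,\dots,z_N$ of $f(z)$ (these are regular, since $f(z)$ is regular) produces mutually disjoint disks $\cdisk(z_k,r)$, each avoiding $z_0$ and carrying exactly one zero of $F(z)$ of index $\ind(z_k;f)$; for $\eps$ small every $z_k$ satisfies $\abs{z_k-z_0}>\eta\sqrt{\eps}$. Since $z_1,\dots,z_N$ are precisely the zeros of $f(z)$ lying outside $\cdisk(z_0,\eta\sqrt{\eps})$, the asserted equality of zero counts will follow once the second claim is proved. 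Thus the whole burden is to rule out any further zero of $F(z)$ in $\{\,\abs{z-z_0}>\eta\sqrt{\eps}\,\}$.

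To this end I would fix, independently of $\eps$, a small $\delta>0$ so that $z_0$ is the only zero of $f(z)$ in $\cdisk(z_0,\delta)$, and a large $\rho_0>\abs{z_0}$ enclosing every $z_k$, and then handle three regions. Near infinity, on $\{\,\abs{z}\ge\rho_0\,\}$, the hypothesis $\lim_{z\to\infty}f(z)=\infty$ lets me enlarge $\rho_0$ so that $\abs{f(z)}\ge 1$ there, while $\abs{\eps/(z-z_0)}\le\eps/(\rho_0-\abs{z_0})$, whence $\abs{F(z)}\ge\abs{f(z)}-\abs{\eps/(z-z_0)}>0$ for small $\eps$. On the compact set obtained by deleting the disks $\disk(z_k,r)$ from $\{\,\delta\le\abs{z-z_0},\ \abs{z}\le\rho_0\,\}$, the function $f(z)$ is continuous and nonvanishing, hence bounded below by some $m>0$, whereas $\abs{\eps/(z-z_0)}\le\eps/\delta<m$ for small $\eps$; this is the uniform (compactness) form of Lemma~\ref{lem:no_zero} and again gives $F(z)\ne 0$.

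The crux is the remaining annulus $\eta\sqrt{\eps}\le\abs{z-z_0}\le\delta$, whose inner radius collapses as $\eps\to 0$; there the crude estimate $\abs{f}\gtrsim\abs{z-z_0}$ is too weak near the inner edge, because $\eta=(\tfrac{n}{n-1})^{1/2}<\sqrt2$. The plan is to pass to $w=z-z_0$ and compare $F(w)$ with the truncation $G(w)=cw^{n-1}+\tfrac{\eps}{w}-\conj{w}$ of~\eqref{eqn:defn_G}, using $\abs{F(w)-G(w)}\le M\abs{w}^n$ from the proof of Theorem~\ref{thm:F_has_2n_zeros}. A direct computation gives $\min_{\abs{w}=\rho}\abs{G(w)}=\abs{f_+(\rho)}/\rho$ for $\rho>\sqrt{\eps}$, where $f_+(\rho)=c\rho^n-\rho^2+\eps$; choosing $\delta<(\tfrac{2}{nc})^{1/(n-2)}$ (and below the radius of convergence) keeps $\rho\in(\rho_2,\rho_3)$, so that $f_+(\rho)<0$ throughout $[\eta\sqrt{\eps},\delta]$ once $\eps$ is small enough that $\rho_2<\eta\sqrt{\eps}$ (Lemma~\ref{lem:loc_zeros_G}), and hence $\min_{\abs{w}=\rho}\abs{G(w)}=\rho-\tfrac{\eps}{\rho}-c\rho^{n-1}$. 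The decisive inequality is $1-\tfrac{\eps}{\rho^2}>\rho^{n-2}(c+M\rho)$ on $[\eta\sqrt{\eps},\delta]$: its left side is $\ge 1-\eta^{-2}=\tfrac1n$ exactly because $\rho\ge\eta\sqrt{\eps}$ with $\eta^2=\tfrac{n}{n-1}$, while its right side is $\le\delta^{n-2}(c+M\delta)<\tfrac1n$ for $\delta$ small. This yields $\abs{G(w)}>M\abs{w}^n\ge\abs{F(w)-G(w)}$ and therefore $\abs{F(w)}>0$ across the entire annulus; the inner circle $\abs{w}=\eta\sqrt{\eps}$ is already covered by~\eqref{eq:outer_circ} and Corollary~\ref{cor:windF}, and the min-modulus formula extends that bound to all larger radii up to $\delta$.

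Assembling the three regions shows that, apart from the single zero in each $\disk(z_k,r)$, the function $F(z)$ has no zeros outside $\cdisk(z_0,\eta\sqrt{\eps})$, which is the second claim, and combining it with the first gives the stated equality of zero counts. I expect the shrinking annulus near $z_0$ to be the only genuine obstacle: away from $z_0$ everything reduces to compactness and the behaviour at infinity, but in that annulus the perturbation $\eps/(z-z_0)$ has the same order $\sqrt{\eps}$ as $f(z)$ itself, so the argument must exploit the precise value $\eta=(\tfrac{n}{n-1})^{1/2}$ through the threshold $1-\eps/\rho^2\ge 1/n$ — the very threshold that places the zeros $\rho_2 e^{\,i2k\pi/n}$ of $G(w)$ just inside the circle $\abs{w}=\eta\sqrt{\eps}$ in Lemma~\ref{lem:loc_zeros_G}.
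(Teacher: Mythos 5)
Your proof is correct, and for the decisive step it takes a genuinely different route from the paper. The treatment of the region near infinity and of the fixed compact set is essentially the paper's (Rouch\'e near infinity, Lemma~\ref{lem:no_zero} plus compactness in between); one small caveat: on your compact set the poles of $R(z)$ are present, so ``$f$ is continuous and nonvanishing, hence bounded below'' is not literally true there --- you should either excise small disks around the poles (where $\abs{F}$ is large anyway) or invoke the covering form of Lemma~\ref{lem:no_zero}, which already disposes of poles. The real divergence is in the shrinking annulus $\eta\sqrt{\eps}\le\abs{z-z_0}\le\delta$. The paper does \emph{not} bound $\abs{F}$ from below there; instead it shows $\abs{R_F'(z)}<\tfrac1n+\tfrac{n-1}{n}=1$, so that $F$ is sense-reversing on the annulus and any zero there would have index $-1$, and then forces the count of such zeros to be zero by comparing $V(F;\Gamma_2)$ with $V(F;\gamma)=-1$ (Corollary~\ref{cor:windF}) and the indices $\ind(z_k';F)=\ind(z_k;f)$ via the argument principle. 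You instead prove the pointwise estimate $\min_{\abs{w}=\rho}\abs{G(w)}=\abs{f_+(\rho)}/\rho=\rho-\tfrac{\eps}{\rho}-c\rho^{n-1}>M\rho^n$ for all $\rho\in[\eta\sqrt{\eps},\delta]$, using $1-\eps/\rho^2\ge 1/n$ on the left and $\delta^{n-2}(c+M\delta)<1/n$ on the right, so that $\abs{F}\ge\abs{G}-M\abs{w}^n>0$ outright; I checked the minimum-modulus formula and the placement $[\eta\sqrt{\eps},\delta]\subset(\rho_2,\rho_3)$ against Lemmas~\ref{lem:num_zeros_G} and~\ref{lem:loc_zeros_G}, and the quantifier order ($M$ and $\delta$ fixed before $\eps$) is sound. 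Your version is more quantitative and self-contained --- it needs neither Corollary~\ref{cor:windF} nor the index bookkeeping for this step, and it shows directly that $F$ is zero-free on the annulus rather than inferring it from a winding count --- at the price of leaning harder on the explicit root locations of $f_\pm$; the paper's index argument is softer and is the template reused in Theorem~\ref{thm:other_z0}, where the truncation $G$ changes.
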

\begin{proof}
Since $\lim_{z \to \infty} f(z) = \infty$, there exists $r_2 > 0$ such that
$\abs{f(z)} \geq 1$ for $\abs{z-z_0} \geq r_2$.  Further we can choose $r_2$ so
that $\Gamma_2 = \{ z :  \abs{z-z_0} = r_2 \}$ contains all poles of $f(z)$ in
its interior.  For $\abs{z-z_0} \geq r_2$ we have
\begin{equation*}
\abs{F(z)} = \abs{ f(z) + \tfrac{\eps}{z-z_0} }
\geq \abs{f(z)} - \tfrac{\eps}{\abs{z-z_0}}
\geq 1 - \tfrac{\eps}{r_2},
\end{equation*}
which is positive for sufficiently small $\eps>0$.  Thus for each such $\eps$
the function $F(z)$ has no zeros on or exterior to $\Gamma_2$.  We further have
\begin{equation*}
\abs{F(z) - f(z)} = \tfrac{\eps}{\abs{z-z_0}} = \tfrac{\eps}{r_2}
< 1 \leq \abs{f(z)} \leq \abs{F(z)} + \abs{f(z)}, \quad z \in \Gamma_2.
\end{equation*}
Hence Rouch\'e's theorem (Theorem~\ref{thm:WindingRouche}) implies
$V(F; \Gamma_2) = V(f; \Gamma_2)$. By Theorem~\ref{thm:pert_local} each
zero of $f(z)$ inside $\Gamma_2$ has a corresponding zero of $F(z)$ with same
index (except for $z_0$).  We will now show that if $\eps > 0$ is sufficiently 
small, then $F(z)$
has no further zeros inside $\Gamma_2$ than implied by
Theorem~\ref{thm:F_has_2n_zeros} and Theorem~\ref{thm:pert_local}. In order
to achieve this, we will use Lemma~\ref{lem:no_zero} combined with a compactness
argument.

By assumption $R'(z_0)=0$.  Thus we can choose
$r_1 > 0$ such that $\abs{R'(z)} < \frac{1}{n}$ for $\abs{z-z_0} \leq r_1$, and
such that $z_0$ is the only zero of $f(z)$ in $\abs{z-z_0} \leq r_1$.
We define the compact set $K$ as follows.  Consider the closed annulus
$r_1 \le \abs{z-z_0} \le r_2$ and denote by $z_1, \dotsc, z_N$ the zeros
of $f(z)$ in that annulus.  By Theorem~\ref{thm:pert_local}, there are
mutually disjoint $\cdisk(z_k,r)$, $0 \leq k \leq N$, such that $F(z)$ has
exactly one zero $z_k' \in \disk(z_k,r)$ of the same index, $1 \leq k \leq N$.
Cutting out these neighborhoods in the annulus, we obtain the compact set $K$
(Figure~\ref{fig:discs}).

For each $\zeta \in K$ we have $f(\zeta) \neq 0$, so there exists a
neighborhood of $\zeta$ as in Lemma~\ref{lem:no_zero}.  These neighborhoods
constitute an open covering of $K$, of which a finite subset is sufficient to
cover $K$.  On each neighborhood $F(z)$ is nonzero for all sufficiently small
$\eps$; see Lemma~\ref{lem:no_zero}.  Hence, only a finite number of smallness
constraints on $\eps$ are sufficient to guarantee that $F(z)$ has no zeros
inside $K$.

Recall that inside each of the cut out disks $\disk(z_k,r)$, $1 \leq k \leq N$,
the function $F(z)$ has exactly one zero of same index as $f(z)$.  Thus, it
remains to show that $F(z)$ has no additional zeros inside the annulus
$A \coloneq A(z_0, \eta \sqrt{\eps}, r_1)$.  As before set $R_F(z) = R(z) +
\tfrac{\eps}{z-z_0}$.  Then, for $z \in A$,
\begin{equation*}
\abs{R_F'(z)} \le \abs{R'(z)} + \tfrac{\eps}{\abs{z-z_0}^2}
< \tfrac{1}{n} + \tfrac{n-1}{n} = 1,
\end{equation*}
and thus $F(z)$ is sense-reversing on $A$.  Denote by $\gamma$ the circle
$\{ z : \abs{z-z_0} = \eta \sqrt{\eps} \}$, and by $n_-$ the number of
(sense-reversing) zeros of $F(z)$ in $A$.  By the argument principle we find
\begin{equation*}
-1 + \sum_{k=1}^N \ind(z_k; f) = V(f; \Gamma_2) = V(F; \Gamma_2)
= V(F; \gamma) + \sum_{k=1}^N \ind(z_k'; F) - n_-,
\end{equation*}
which shows $n_- = 0$, since $\ind(z_k; f) = \ind(z_k'; F)$ for $k =
1, \ldots, N$, and $V(F; \gamma) = -1$; see Corollary~\ref{cor:windF}.
\eop
\end{proof}

\subsection{Perturbation at arbitrary points}
\label{sec:other_z0}

We now consider perturbations at points where the assumptions of 
Theorem~\ref{thm:pert_new_zeros} are not satisfied. The situation is
simpler than in the setting of Theorem~\ref{thm:pert_new_zeros} and
for the proof the same techniques as in Section~\ref{sec:near_z0} can 
be applied. We therefore only give a sketch of the proof. Furthermore,
we assume for simplicity that both $f(z)$ and $F(z)$ are regular, 
although this requirement could be weakened somewhat.

\begin{thm}
\label{thm:other_z0}
Let $f(z) = R(z) - \conj{z}$ with $\deg(R)\geq 2$ be regular and satisfy
$\lim_{z \to \infty} f(z) = \infty$, and let $z_0 \in \C$.
For sufficiently small $\eps > 0$, if
\begin{equation*}
F(z)  = f(z) + \tfrac{\eps}{z-z_0}
\end{equation*}
is regular, then the following holds:
\begin{compactenum}
\item
\label{it:at_poles}
If $z_0$ is a pole of $f(z)$, then $F(z)$ and $f(z)$ have the same number of
zeros.

\item
\label{it:at_nonzero}
If $0 < \abs{f(z_0)} < \infty$, then there exists $r > 0$ such that $0 <
\abs{f(z)} < \infty$ on $\disk(z_0,r)$, and $F(z)$ has at least one
sense-preserving zero in $\disk(z_0,r)$.

\item
\label{it:at_preserve}
If $f(z_0) = 0$, and $\abs{R'(z_0)} > 1$, there exists $r > 0$ such that $F(z)$
has at least two sense-preserving zeros in $\disk(z_0,r)$.

\item
\label{it:at_reverse}
If $f(z_0) = 0$, and $0 < \abs{R'(z_0)} < 1$, there exists $r > 0$ such that
$F(z)$ has at least two sense-preserving zeros and two sense-reversing zeros
in $\disk(z_0,r)$.
\end{compactenum}
Further, in \textit{\ref{it:at_nonzero}.}, \textit{\ref{it:at_preserve}.} and
\textit{\ref{it:at_reverse}.}, $F(z)$ and $f(z)$ have the same number of zeros
outside $\disk(z_0,r)$.
\end{thm}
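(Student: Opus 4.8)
The plan is to treat each of the four cases by the same local strategy used in Section~\ref{sec:near_z0}: substitute $w \coloneq z - z_0$, expand $R$ in a series about $z_0$, and compare the perturbed function $F$ to a suitable truncation whose zeros can be counted explicitly via Rouch\'e's theorem (Theorem~\ref{thm:WindingRouche}) and the argument principle (Theorem~\ref{thm:arg_principle}). The ``same number of zeros outside $\disk(z_0,r)$'' claim in each case will follow verbatim from the compactness/covering argument of Theorem~\ref{thm:F_has_no_other_zeros}, using Theorem~\ref{thm:pert_local} to track the surviving regular zeros and Lemma~\ref{lem:no_zero} to rule out spurious ones; since $f$ and $F$ are assumed regular this part is routine, so the real content is the local count near $z_0$ in each case.

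For case~\textit{\ref{it:at_poles}} (a pole of $f$), the pole at $z_0$ is merely shifted: if $R$ has a pole of order $m$ at $z_0$, then $R + \tfrac{\eps}{z-z_0}$ still has a pole of the same order $m$ there (the $\tfrac{\eps}{z-z_0}$ term is lower-order), so by Proposition~\ref{prop:P_index_for_zeros_poles} the local index $-m$ is unchanged and no new zeros appear near $z_0$; combined with the exterior argument this gives equality of the total zero counts. For case~\textit{\ref{it:at_nonzero}} ($0 < \abs{f(z_0)} < \infty$), I would note that $f$ is continuous and nonvanishing on a small $\disk(z_0,r)$, so the analytic part dominates: on the circle $\abs{w} = \delta$ the term $\tfrac{\eps}{w}$ has winding $-1$, and a Rouch\'e comparison of $F$ with $\tfrac{\eps}{w}$ (valid once $\eps$ is small relative to $\min \abs{f}$ but the circle radius is chosen of order $\sqrt{\eps}$) shows $V(F;\gamma)$ picks up the contribution of the pole plus one zero; the cleanest route is to compare $F$ with the truncation $a_0 + \tfrac{\eps}{w} - \overline{w}$ where $a_0 = f(z_0)$, count its zeros directly (one sense-preserving zero, since the leading balance is between the constant and the pole), and carry this over by Rouch\'e.

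Cases~\textit{\ref{it:at_preserve}} and \textit{\ref{it:at_reverse}} ($f(z_0)=0$ with $R'(z_0)\neq 0$, i.e.\ $n-1 = 1$) are the analogues of Theorem~\ref{thm:F_has_2n_zeros} with $n=2$, except that the hypothesis $n \geq 3$ there was used to guarantee $R'(z_0)=0$; here I would instead write $c \coloneq R'(z_0) \neq 0$ and study the truncation $G(w) = c w + \tfrac{\eps}{w} - \overline{w}$, whose zeros solve $c w^2 - \abs{w}^2 + \eps = 0$. Counting the real-argument solutions exactly as in Lemma~\ref{lem:num_zeros_G} (now the quadratic-type analysis in $\rho$) produces, for small $\eps$, two zeros near radius $\sqrt{\eps}$; their sense is read off from $\abs{R_G'(w')} = \abs{1 - 2\tfrac{\eps}{\abs{w'}^2}}$ as in Lemma~\ref{lem:sense_zeros_G}, giving sense-preserving zeros in the inner band and, when $\abs{c} < 1$, additional sense-reversing zeros in the outer band. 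The main obstacle I anticipate is precisely the bookkeeping of how the assumption on $\abs{R'(z_0)} = \abs{c}$ relative to $1$ controls whether the outer pair of zeros is sense-reversing (case~\textit{\ref{it:at_reverse}}) or absorbed/absent (case~\textit{\ref{it:at_preserve}}): when $\abs{c} > 1$ the function is sense-preserving throughout the small disk, forcing both near-$z_0$ zeros to have index $+1$, whereas when $\abs{c} < 1$ there is a transition radius at which $\abs{R_G'} = 1$, producing the extra sense-reversing pair; getting these index counts to match the stated minimum numbers while the exterior count stays fixed is the delicate step, and it is exactly why only a sketch is offered in the paper.
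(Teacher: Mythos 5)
Your strategy is essentially the paper's: the same truncation $G(w) = cw + \tfrac{\eps}{w} - \conj{w}$ with $c = R'(z_0)$ in cases \textit{\ref{it:at_preserve}.}\ and \textit{\ref{it:at_reverse}.}, the same explicit zeros $\pm i(\tfrac{\eps}{1+c})^{1/2}$ (sense-preserving) and, for $0<c<1$, $\pm(\tfrac{\eps}{1-c})^{1/2}$ (sense-reversing), Rouch\'e's theorem to transfer windings to $F$, and the compactness machinery of Theorem~\ref{thm:F_has_no_other_zeros} for the exterior count. Two of your steps, however, do not go through as stated, and both stem from the same misconception. In case \textit{\ref{it:at_poles}.}\ you argue that since the local index $-m$ at the pole is unchanged, ``no new zeros appear near $z_0$.'' For harmonic functions an unchanged winding only fixes the \emph{signed} count $n_+ - n_-$ of regular zeros enclosed; a sense-preserving/sense-reversing pair could appear next to the pole without altering the index --- this pair creation is precisely the phenomenon the whole paper revolves around. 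What is needed, and what the paper does, is to show $F(z)\neq 0$ on a small closed disk around $z_0$ directly, by writing $R(z) = (z-z_0)^{-m}\widetilde{R}(z)$ with $\widetilde{R}(z_0)\notin\{0,\infty\}$ and bounding $R_F(z) = (z-z_0)^{-m}\bigl(\widetilde{R}(z) + \eps(z-z_0)^{m-1}\bigr)$ from below on a disk of suitably small radius. The same misconception resurfaces in case \textit{\ref{it:at_preserve}.}: the claim that $\abs{c}>1$ forces $F$ to be sense-preserving throughout $\abs{w}\le\sqrt{\eps}$ is unjustified, since on $\abs{w}=\sqrt{\eps}$ the triangle inequality only yields $\abs{R_F'(w)}\ge c-1$, which lies below $1$ when $1<c<2$. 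Fortunately you do not need that claim: $V(F;\abs{w}=\sqrt{\eps}) = V(G;\abs{w}=\sqrt{\eps}) = +1$ together with the pole's index $-1$ gives $n_+ - n_- = 2$ for the zeros of $F$ in that disk, hence at least two sense-preserving zeros, which is all the statement asks for.

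The exterior count is also not quite ``verbatim'' Theorem~\ref{thm:F_has_no_other_zeros}. That proof hinges on $R'(z_0)=0$ to make $F$ sense-reversing on the bridging annulus $\annu(z_0,\eta\sqrt{\eps},r_1)$ --- the region between the shrinking disk where the new zeros live and the fixed annulus handled by the covering argument --- and then feeds $V(F;\gamma)=-1$ into the final index balance. Here that annulus step must be redone case by case: in case \textit{\ref{it:at_preserve}.}\ one shows $F$ is sense-preserving on the bridging annulus (using $\abs{R'(w)}>\tfrac{1+c}{2}$ near $z_0$) and balances against $V(F;\gamma)=+1$; in case \textit{\ref{it:at_reverse}.}\ one starts the annulus at radius $\tfrac{2}{\sqrt{1-c}}\sqrt{\eps}$, outside the sense-reversing pair, where $\abs{R_F'(w)}\le\abs{R'(w)}+\tfrac{\eps}{\abs{w}^2}\le\tfrac{1+c}{2}+\tfrac{1-c}{4}<1$, and balances against $V(F;\gamma)=-1$. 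You correctly identify this bookkeeping as the delicate point, but the proposal leaves it unresolved, and the choice of the radius $r$ in the theorem statement (namely $\sqrt{\eps}$ versus $\tfrac{2}{\sqrt{1-c}}\sqrt{\eps}$) is exactly what makes the two balances close.
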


\begin{proof} (Sketch)

In~\textit{\ref{it:at_poles}.} we have $R(z) =(z-z_0)^{-m} \widetilde{R}(z)$ 
with $\widetilde{R}(z_0) \notin \{ 0, \infty \}$, $m \geq 1$.  In some disk
$\cdisk(z_0,\delta)$ we have $\abs{z} \leq M$ and $\abs{\widetilde{R}(z)} \geq
M' > 0$.  If need be, reduce $\delta$ such that $\delta^m < \tfrac{M'}{2M}$
holds.  Noting that
\begin{equation*}
R_F(z) = R(z) + \tfrac{\eps}{z-z_0} 
= \tfrac{1}{(z-z_0)^m} ( \widetilde{R}(z) + \eps (z-z_0)^{m-1} ),
\end{equation*}
one can compute that $\abs{F(z)} > 0$ on $\cdisk(z_0, \delta)$ for all
sufficiently small $\eps$.  Consider the annulus $\annu(z_0, \delta, r_2)$ which
contains all zeros of $f(z)$ ($r_2$ is chosen as in the proof of
Theorem~\ref{thm:F_has_no_other_zeros}). After cutting out appropriate disks
about the zeros of $f(z)$, a compact set $K$ remains.  As in the proof of
Theorem~\ref{thm:F_has_no_other_zeros}, for sufficiently small $\eps > 0$, each
disk around a zero of $f(z)$ contains exactly one zero of $F(z)$, and the set
$K$ does not contain any zeros of $F(z)$.

In~\textit{\ref{it:at_nonzero}.}
there exists $r > 0$ such that $0 < \abs{f(z)} < \infty$ on $\cdisk(z_0,r)$.  By
the argument principle the winding of $f(z)$ along the boundary curve is $0$. 
Apply Rouch\'e's theorem (Theorem~\ref{thm:WindingRouche}) to $f(z)$ and $F(z)$
on the boundary to see that $F(z)$ has at least one sense-preserving zero inside
this disk, provided $\eps > 0$ is sufficiently small.

Let $r_2$ and $\Gamma_2$ be defined as in the proof of
Theorem~\ref{thm:F_has_no_other_zeros}, and construct the compact set $K$ by
removing from the annulus $\cannu(z_0, r, r_2)$ suitable disks centered at the
zeros of $f(z)$.  Proceeding exactly as in the proof of
Theorem~\ref{thm:F_has_no_other_zeros} we see that $F(z)$ and $f(z)$ have the
same number of zeros outside $\disk(z_0,r)$.

In both~\textit{\ref{it:at_preserve}.} and~\textit{\ref{it:at_reverse}.} we
substitute $w \coloneq z-z_0$ for simplicity of notation, as in
Section~\ref{sec:near_z0}.  Set $c \coloneq R'(0)$, which can be assumed to be
real positive.  Truncation of the Laurent series of the analytic part of
$F(w) = f(w) + \tfrac{\eps}{w}$ yields the function $G(w) = cw + \frac{\eps}{w}
- \conj{w}$.

In~\textit{\ref{it:at_preserve}.} we have $c > 1$ and one computes that
$G(w)$ has exactly two sense-preserving zeros $\pm i(\frac{\eps}{1 +
c})^{\frac{1}{2}}$.  Applying Rouch\'e's theorem to $F(w)$ and $G(w)$ on the
circle $\abs{w} = \sqrt{\eps}$ gives the result.  
An example is shown in Section~\ref{sec:example_preserve}.

Since $c > 1$, there exists $r_1 > 0$ such that $\abs{R'(w)} > \tfrac{1+c}{2}$
on $\cdisk(0,r_1)$.  We then have for $w \in A \coloneq A(0,\sqrt{\eps},r_1)$
\begin{equation*}
\abs{R_F'(w)} = \abs{R'(w) - \tfrac{\eps}{w^2}}
\geq \abs{R'(w)} - \tfrac{\eps}{\abs{w}^2} \geq 1 + \tfrac{c-1}{2} -
\tfrac{\eps}{r_1^2},
\end{equation*}
which is larger than $1$ if $\eps$ is chosen sufficiently small.  Thus $F(w)$
is sense-preserving on $A$.  Now, the same reasoning as in the proof of
Theorem~\ref{thm:F_has_no_other_zeros} (with the modified $A$) shows that $F(w)$
and $f(w)$ have the same number of zeros outside $\disk(z_0,r)$, where $r =
\sqrt{\eps}$.

In~\textit{\ref{it:at_reverse}.} we have $0 < c < 1$ and one computes
that $G(w)$ has exactly the two sense-preserving zeros $\pm
i (\frac{\eps}{1+c})^{\frac{1}{2}}$, and the two sense-reversing zeros $\pm
(\frac{\eps}{1-c})^{\frac{1}{2}}$.  Applying Rouch\'e's theorem to $F(w)$ and
$G(w)$ on $\gamma_1 = \{ w \in \C : \abs{w} = \sqrt{\eps}\}$ shows that $F(w)$
has two sense-preserving zeros inside $\gamma_1$, and applying it on the
circle $\gamma_2 = \{ w \in \C : \abs{w} = \frac{2}{\sqrt{1-c}} \sqrt{\eps} \}$
shows that $F(w)$ has two sense-reversing zeros between $\gamma_1$ and
$\gamma_2$.  An example for this case can be seen in
Section~\ref{sec:example_randpert}, more specifically in
Figure~\ref{fig:randpert2}.

Since $0 < c < 1$ there exists $r_1 > 0$ such that $\abs{R'(w)} <
\tfrac{1+c}{2}$ on $\cdisk(0,r_1)$.  Then, we have for $w \in A \coloneq A(0,
\frac{2}{\sqrt{1-c}} \sqrt{\eps}, r_1)$
\begin{equation*}
\abs{R_F'(w)} = \abs{R'(w) - \tfrac{\eps}{w^2}} \leq \abs{R'(w)} +
\tfrac{\eps}{\abs{w}^2} \leq \tfrac{1+c}{2} + \eps \tfrac{1-c}{4 \eps} < 1,
\end{equation*}
so that $F(w)$ is sense-reversing in $A$.  The same reasoning as in the
proof of Theorem~\ref{thm:F_has_no_other_zeros} now shows that $F(w)$ and $f(w)$
have the same number of zeros outside $\disk(z_0,r)$, where $r =
\frac{2}{\sqrt{1-c}} \sqrt{\eps}$.
\eop
\end{proof}

\section{Examples}
\label{sec:examples}

We briefly discuss the Poincar\'{e} index and its connection with
phase portraits.  (Recall the example given in the Introduction; see
Figure~\ref{fig:circlens}.) Let $z_0$ be an isolated exceptional point
of the continuous function $f(z)$, and let $\gamma$ be a circle around
$z_0$ suitable for the computation of $\ind(z_0; f)$.  Clearly, the
Poincar\'{e} index measures the overall change of the argument of
$f(z)$ while $z$ travels once around $\gamma$ (in the positive sense).
This corresponds exactly to the \emph{chromatic number} of $\gamma$, as
discussed in~\cite[p.~772]{WegertSemmler2011}.  Thus, less formally,
the Poincar\'{e} index corresponds to the number of times each color
appears in the phase portrait while travelling once around $z_0$, and the
sign of the Poincar\'{e} index is revealed by the ordering in which the
colors appear.  This observation allows to determine the Poincar\'{e}
index of an isolated exceptional point of $f(z)$ by looking at the
phase portrait of $f(z)$.

For the color scheme we use in the phase portraits, the color ordering
while travelling around some point is exemplified for the
indices $+1$, $-1$ and $-2$ as follows (left to right):
\begin{center}
\includegraphics[width=36pt]{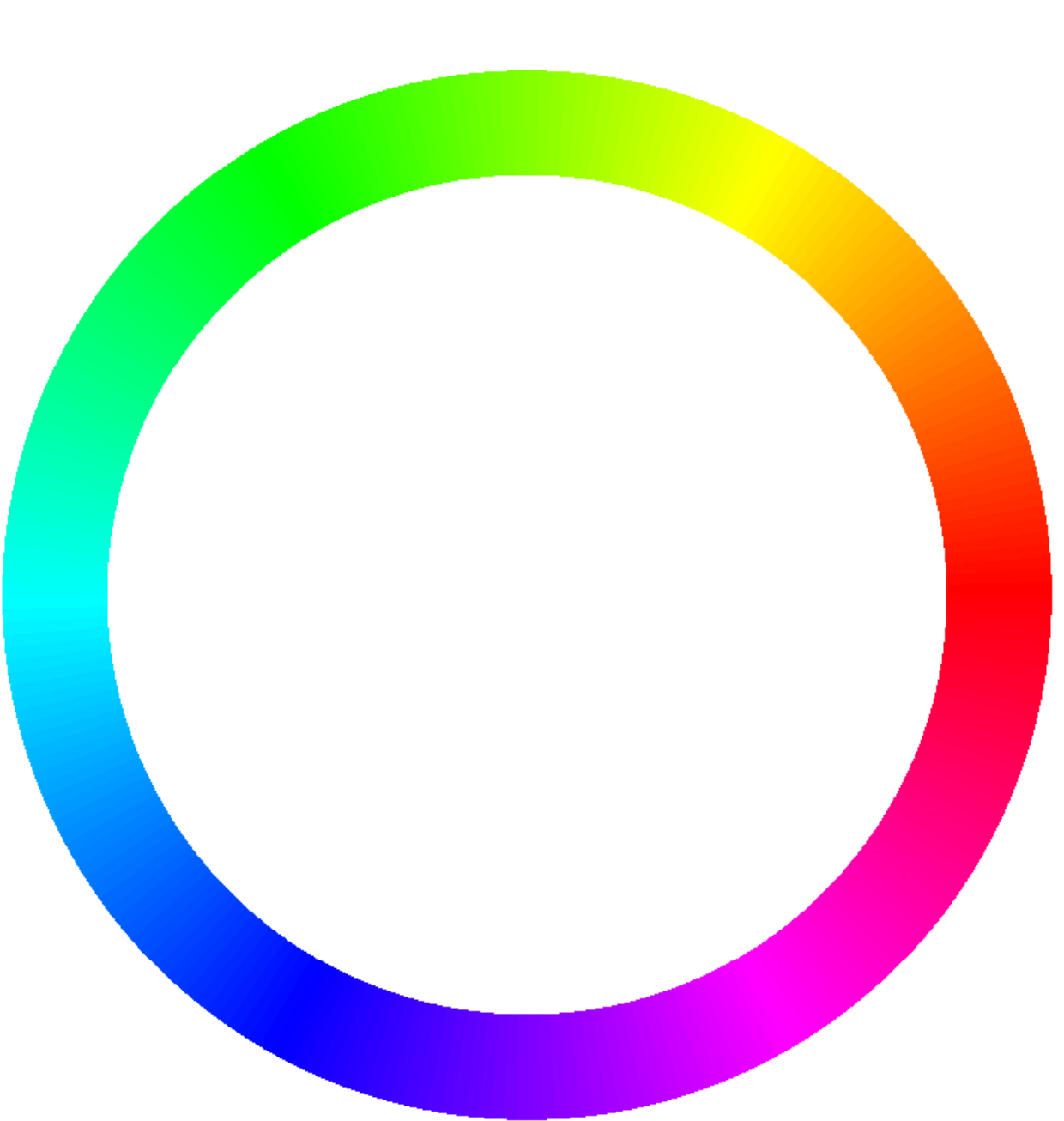}
\qquad
\includegraphics[width=36pt]{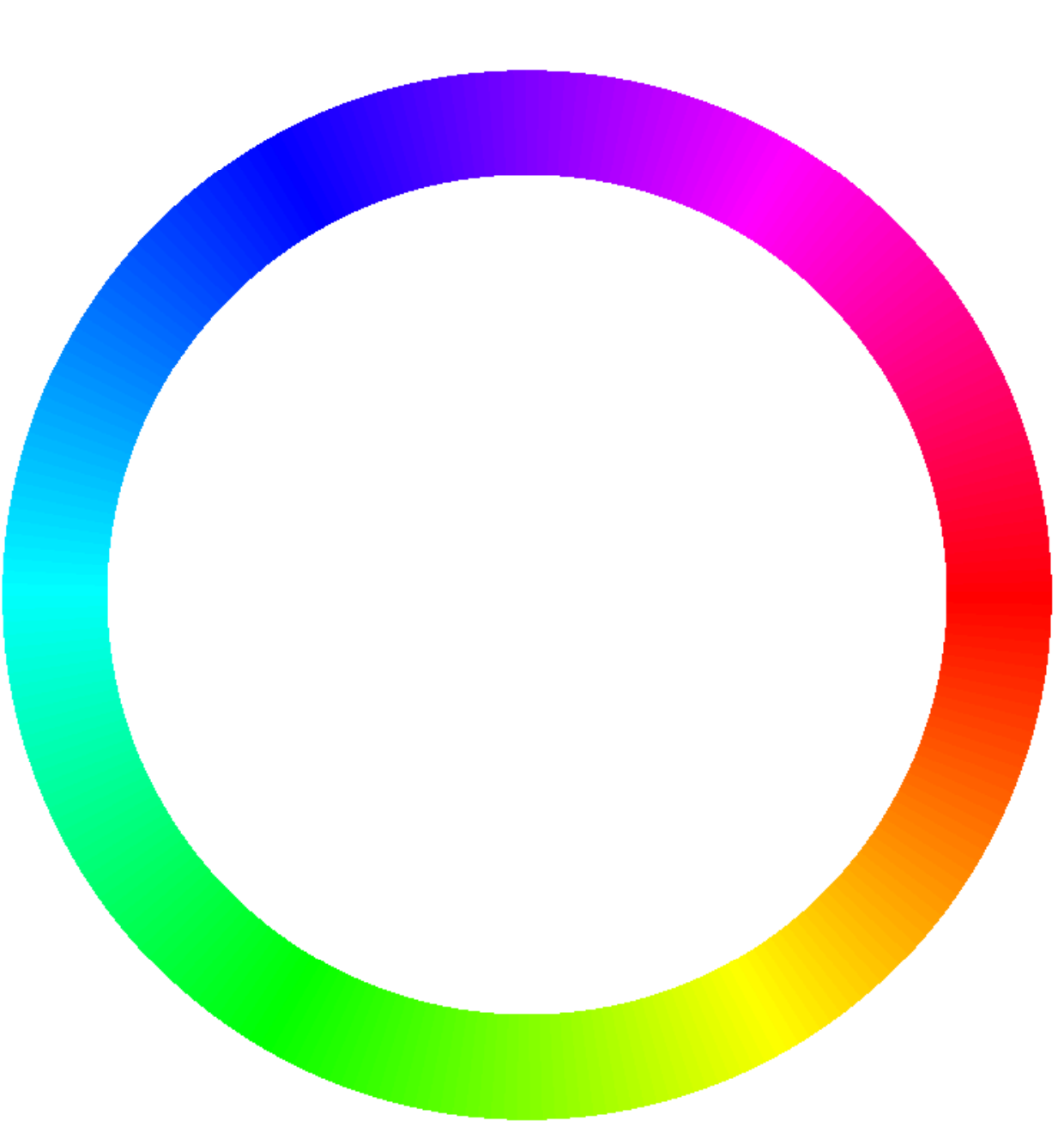}
\qquad
\includegraphics[width=36pt]{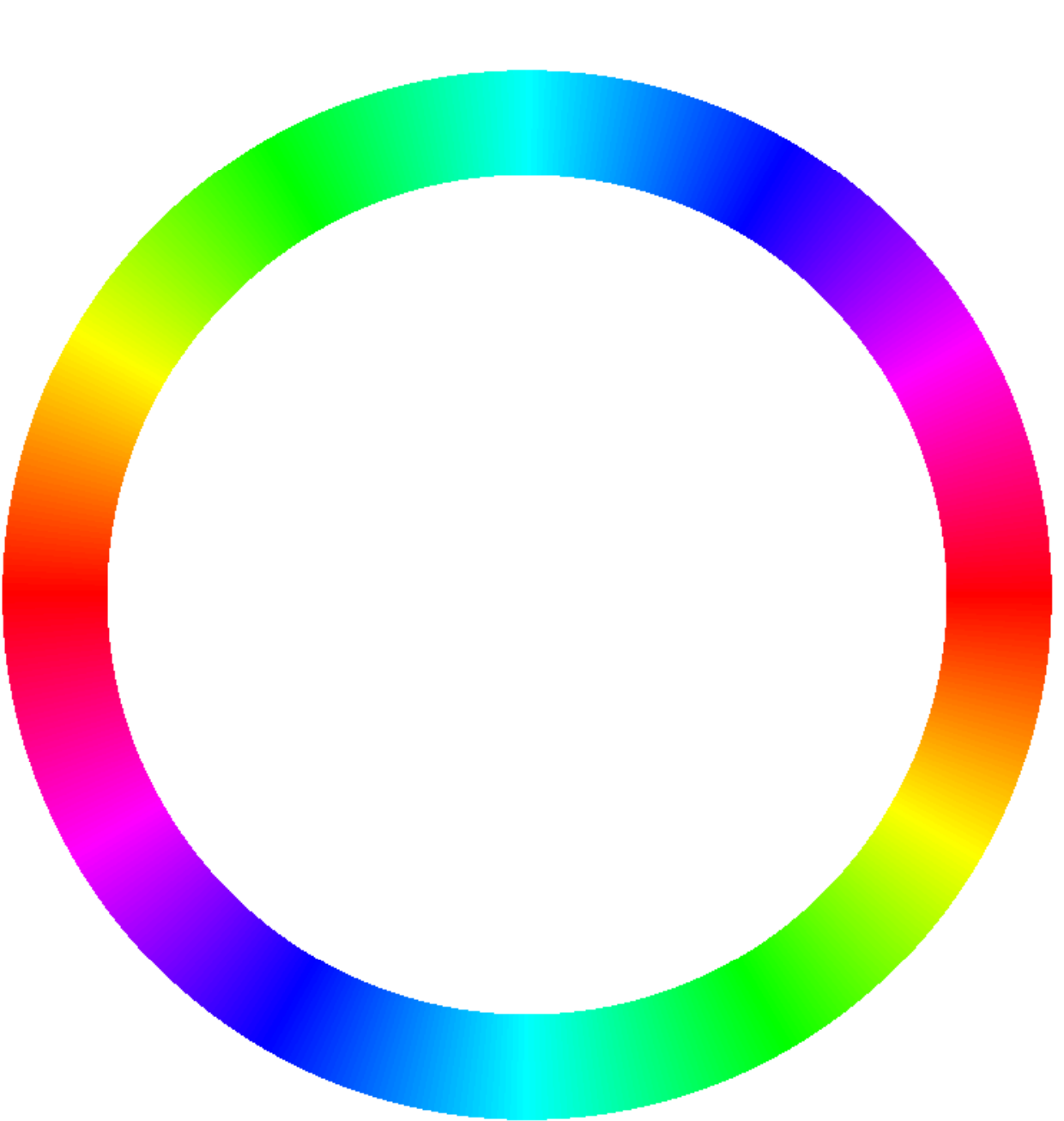}
\end{center}
This is the same ``color wheel'' as used, for example,
in~\cite{WegertSemmler2011}.  For the computation of the phase
portraits in this paper we used a slightly modified version of the
MATLAB package ``Phase Plots of Complex Functions'' by Elias
Wegert\footnote{\url{http://www.visual.wegert.com/}}.

Thus the index of an exceptional point of $f(z) = R(z) - \conj{z}$ can
be determined from the phase portrait.  Poles and zeros of $f(z)$,
which are the only exceptional points, are marked with white squares
(poles) or black disks (zeros).  Additionally, all phase portraits are
visually divided in slightly brightened regions, where $f(z)$ is
sense-preserving (i.e., the analytic part dominates), and slightly
darkened regions, where $f(z)$ is sense-reversing (i.e., the
co-analytic part dominates).  Notice that the indices of zeros in
the latter case are always $-1$, while the indices of sense-preserving
zeros are $+1$.

\subsection{Circular point lenses}
\label{sec:example_circlens}

Recall Rhie's construction described in the Introduction, which
is based on the function $R_0(z) = \frac{z^{d-1}}{z^d - r^d}$, for some 
$d \geq 2$ and $r > 0$. It is easy to see that $R_0^{(k)}(0) = 0$ for $1 \leq k
\leq d-2$, while $R_0^{(d-1)}(0) \neq 0$.  An application of \textit{(i)} in
Theorem~\ref{thm:pert_new_zeros} with $n = d$ shows that $F(z) = R_0(z) +
\frac{\eps}{z} - \conj{z}$ has $2d$ zeros located in two annuli around the
perturbation point $z_0=0$. 
Moreover, if $r>0$ and $\eps>0$ are sufficiently small, then there exist 
further $3d$ zeros corresponding to the $3d$ zeros of $R_0(z)-\conj{z}$ 
away from $z_0=0$.  The results in~\cite{LuceSeteLiesen2013} show that
the same perturbation behavior can be observed for Rhie's original
function $(1 - \eps) R_0(z) + \frac{\eps}{z} - \conj{z}$.  A numerical
example for this function with $d=7$ is shown in Figure~\ref{fig:circlens}.

\subsection{Generation of more than $2n$ zeros}
\label{sec:example_smallder}

\begin{figure}[t]
\subfigure[Unperturbed function]{%
\label{fig:smallder1}%
\includegraphics[width=.49\textwidth]{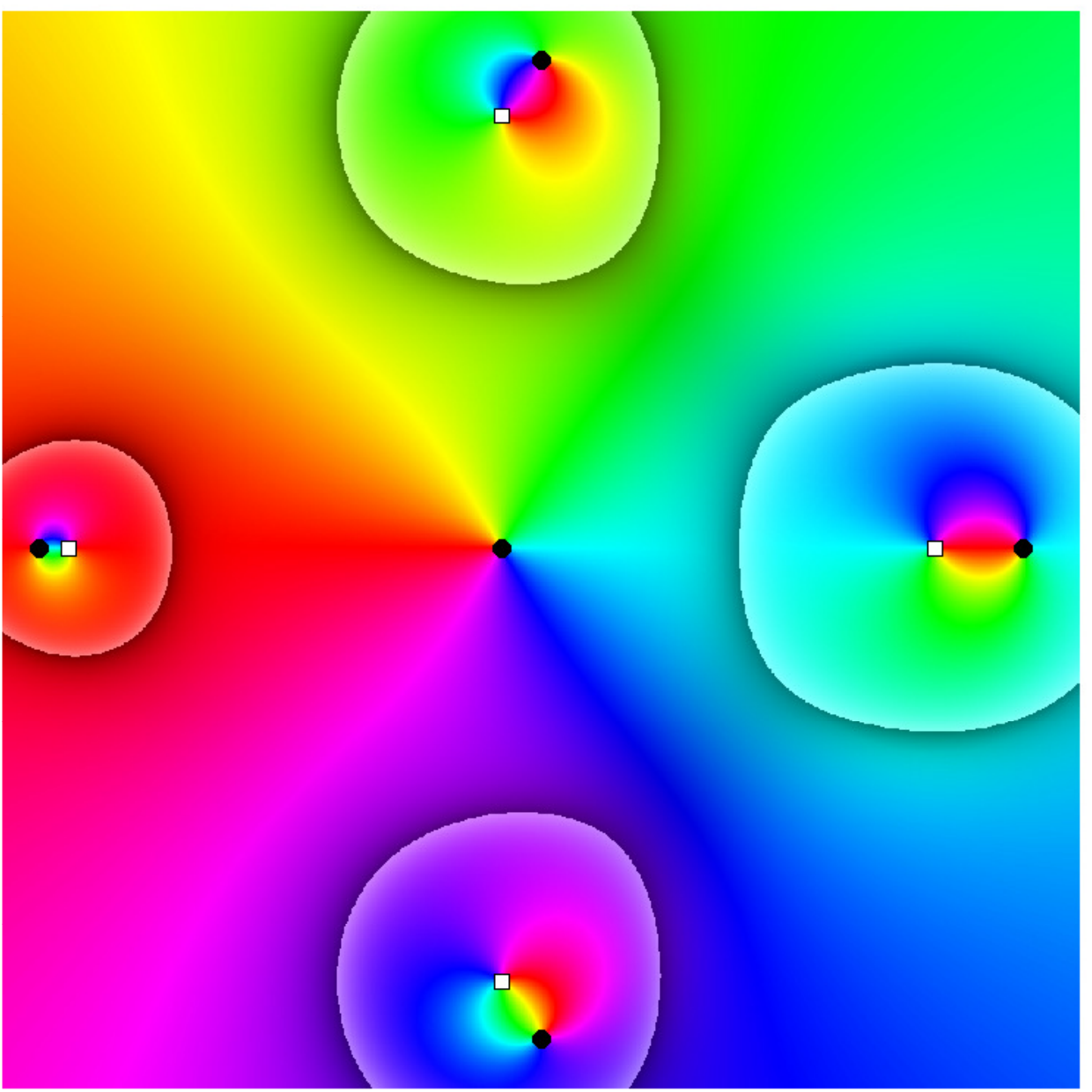}
}
\hfill
\subfigure[$\eps = 0.1$]{%
\label{fig:smallder2}%
\includegraphics[width=.49\textwidth]{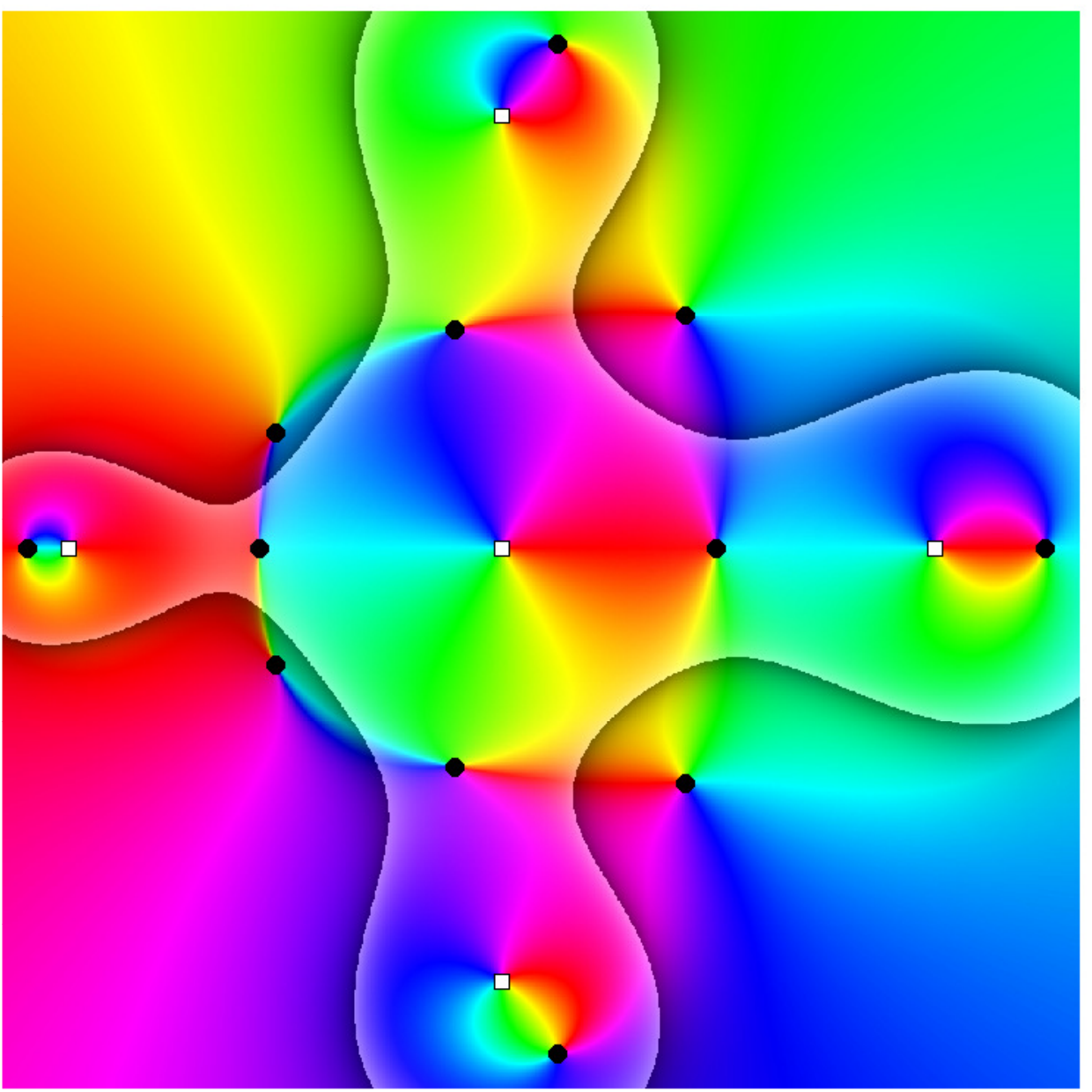}
}
\newline
\subfigure[$\eps = 0.05$]{%
\label{fig:smallder3}%
\includegraphics[width=.49\textwidth]{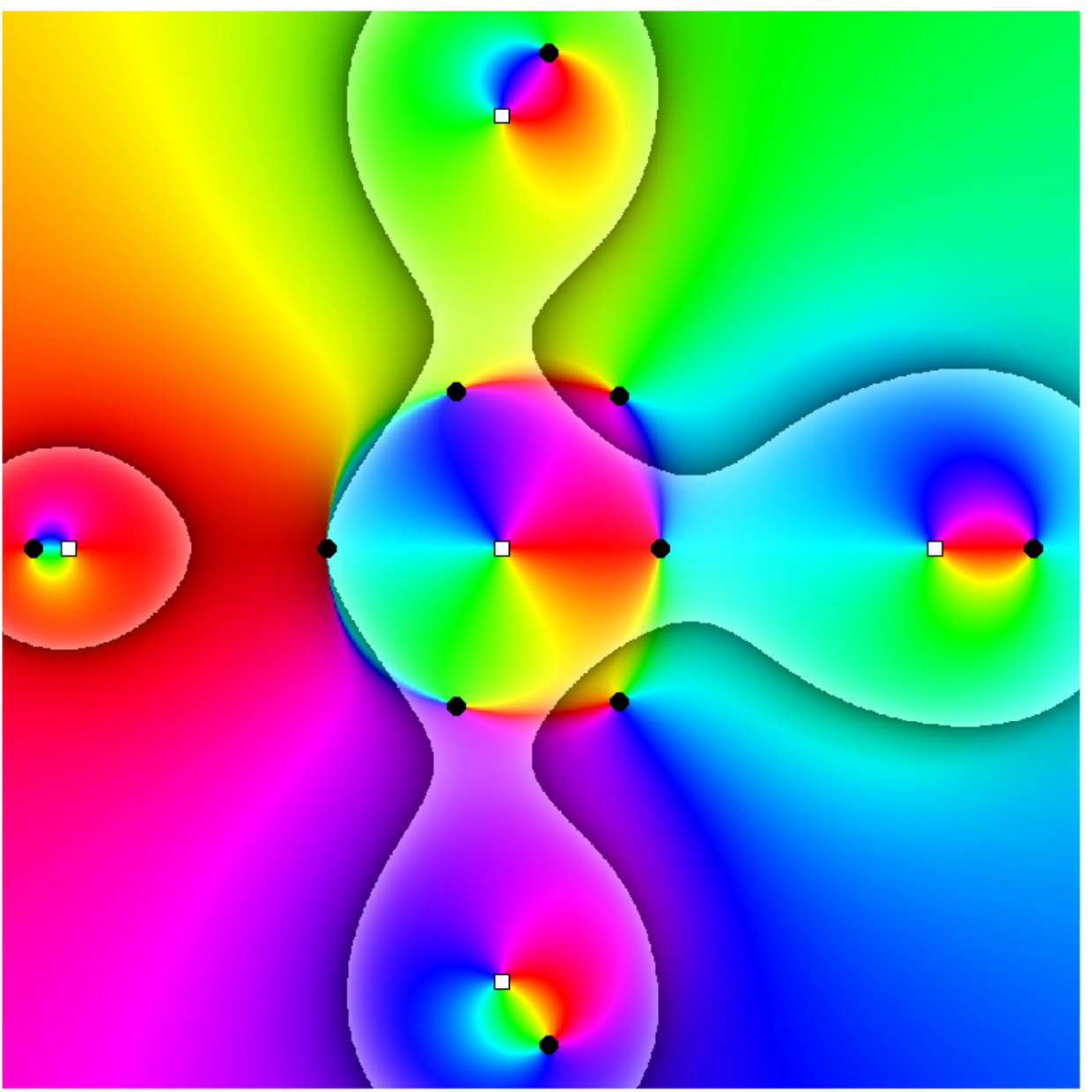}
}
\hfill
\subfigure[$\eps = 0.01$]{%
\label{fig:smallder4}%
\includegraphics[width=.49\textwidth]{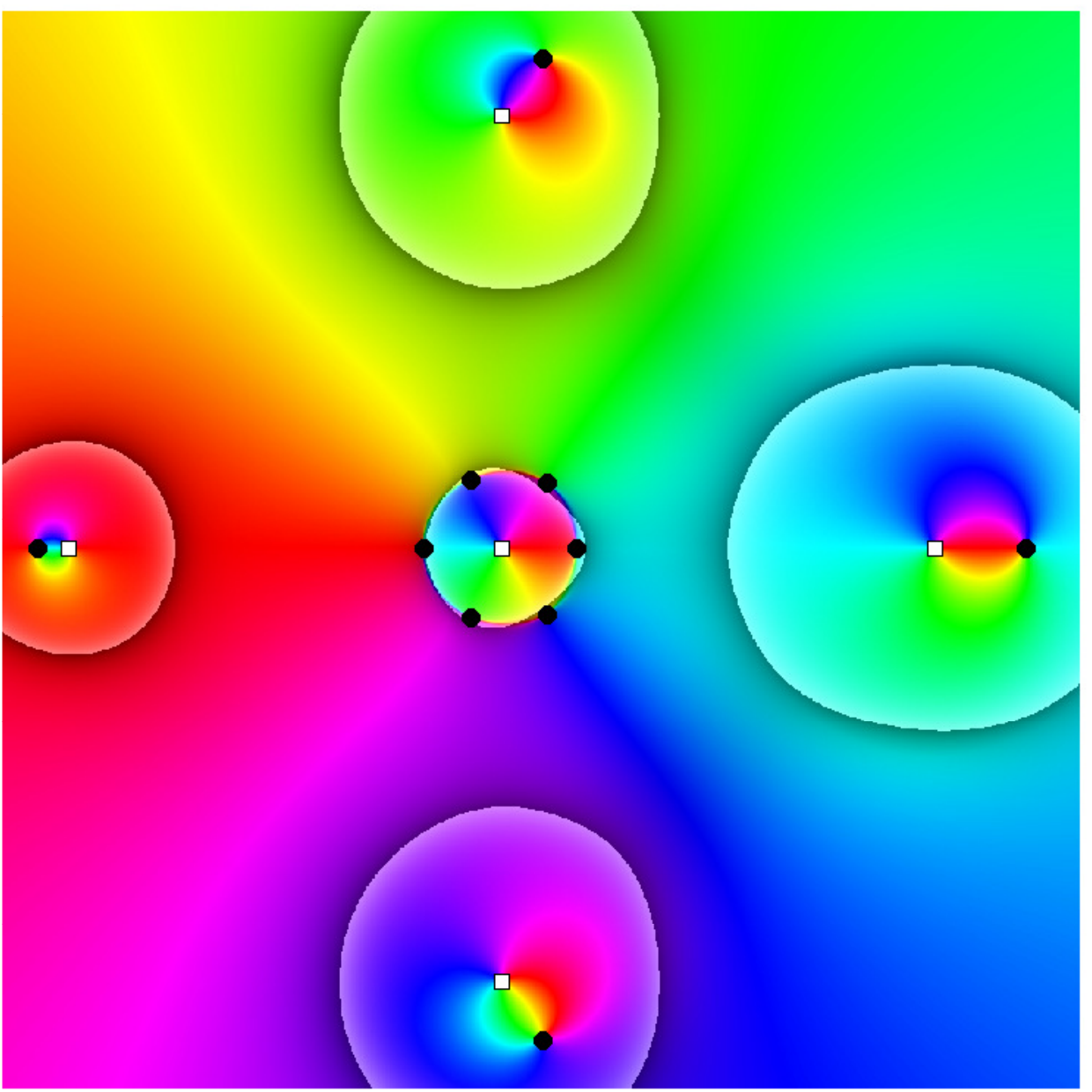}
}
\caption{More than $2n$ additional zeros may occur; see Section~\ref{sec:example_smallder}.}
\end{figure}

Theorem~\ref{thm:F_has_2n_zeros} \textit{(i)} gives the lower bound $2n$ on the number of
additional zeros due to the perturbation.  While we suspect that 
actually exactly $2n$ zeros appear for sufficiently small $\eps$ (see also 
Remark~\ref{rem:not_sharp}), we will now see that \emph{more} than
$2n$ zeros may appear if $\eps$ is not small enough.

Consider the function
\begin{equation*}
R(z) = \tfrac{\frac{1}{6} z^3 + \frac{1}{20} z^2}{z^4 - \frac{1}{10}},
\end{equation*}
for which $R(0)=R'(0)= 0$, $\abs{R''(0)} = 1$ and $\abs{R'''(0)} = 10$.  
A phase portrait of $R(z) - \conj{z}$ is shown in Figure~\ref{fig:smallder1}.

Figure~\ref{fig:smallder2} shows a phase portrait of
$R(z)+\tfrac{\eps}{z}-\conj{z}$ with $\eps=0.1$.
The perturbation results in 8 additional zeros
in the vicinity of $0$, although $n = 3$.
In this example the first non-vanishing derivative is dominated by a higher
derivative and hence our analysis from Section~\ref{sec:perturb} does
not apply, since $\eps$ is not small enough.

However, the second derivative becomes dominant as soon as $\eps$
is sufficiently small. Figure~\ref{fig:smallder3} shows the result of
perturbing with $\eps = 0.05$.  Now only $2n=6$ additional zeros occur,
as suggested by Theorem~\ref{thm:pert_new_zeros}.  If $\eps$ is
reduced further, as can be seen in Figure~\ref{fig:smallder4} for $\eps=0.01$,
the additional zeros approach those of the function $G(w)$ in 
Section~\ref{sec:perturb}, but the number of additional zeros (six) remains the 
same.

\subsection{Perturbing a sense-preserving zero}
\label{sec:example_preserve}

Figure~\ref{fig:pert_sense_pres} shows the result of perturbing a
sense-preserving zero by a pole.  The left plot shows a randomly
generated\footnote{More precisely, we chose $5$ points in the complex
plane, with real and imaginary parts drawn uniformly at random from
$[-1,1]$.  We then constructed $R(z)$ such that these random points
are the zeros of $R(z) - \conj{z}$.} rational harmonic function $f(z)
= R(z) - \conj{z}$, where $R(z)$ is of degree two (both the nominator and denominator
polynomials of $R(z)$ have degree two) and extremal.  The point $z_0$ is a
sense-preserving zero of $f(z)$ with $\abs{R'(z_0)} \approx 1.176 >
1$.  We add a pole at $z_0$ with $\eps=0.025$; the resulting
function is shown in Figure~\ref{fig:pert_sense_pres} (right).  As
suggested by Theorem~\ref{thm:other_z0}, two sense-preserving zeros appear.

\begin{figure}[t]
\includegraphics[width=.49\textwidth]{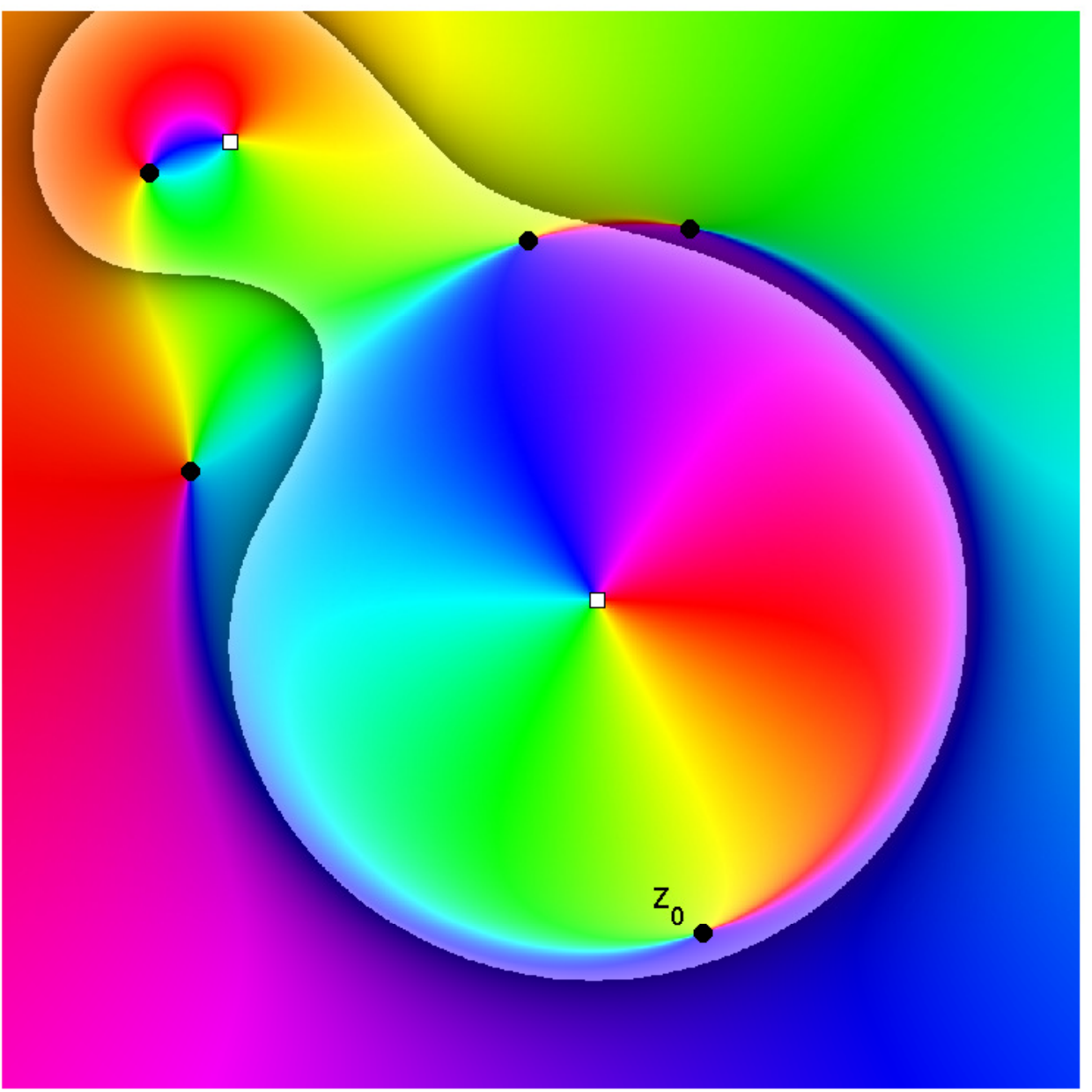}
\hfill
\includegraphics[width=.49\textwidth]{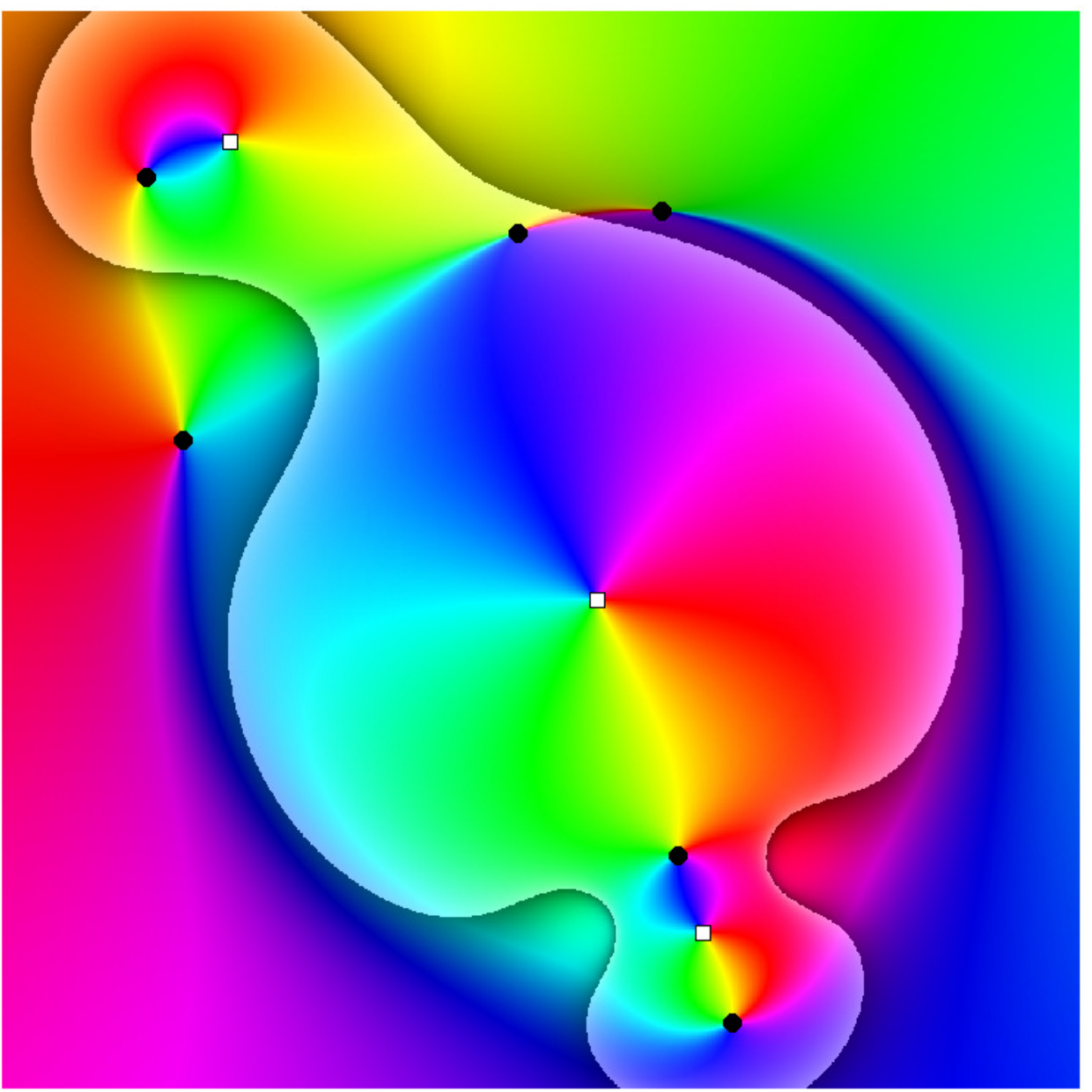}
\caption{Perturbation at a sense-preserving zero; see
Section~\ref{sec:example_preserve}.}
\label{fig:pert_sense_pres}
\end{figure}

\subsection{Iterative perturbation}
\label{sec:example_randpert}

Starting from the randomly chosen extremal rational function 
$R(z)$ of degree two introduced in the previous section, 
our goal is now to successively add two poles at sense-reversing 
zeros such that extremality is maintained by each perturbation. 
We will thus obtain extremal rational functions of degrees three and four.  
This procedure is displayed in Figure~\ref{fig:randpert}, which we will 
discuss in detail now.

\begin{figure}[p]
\subfigure[]{%
\label{fig:randpert1}%
\includegraphics[width=.45\textwidth]{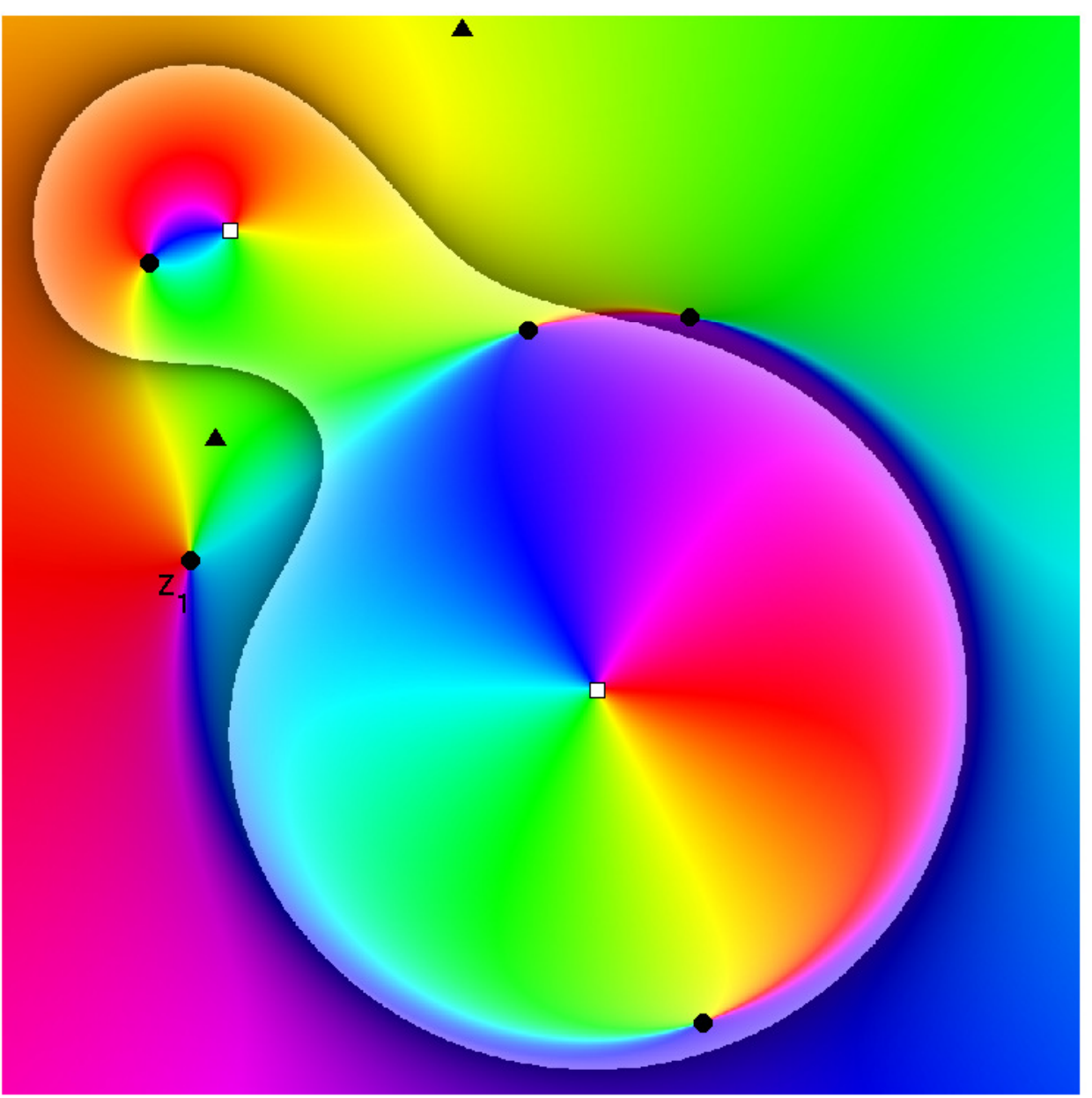}
}
\hfill
\subfigure[]{%
\label{fig:randpert2}%
\includegraphics[width=.45\textwidth]{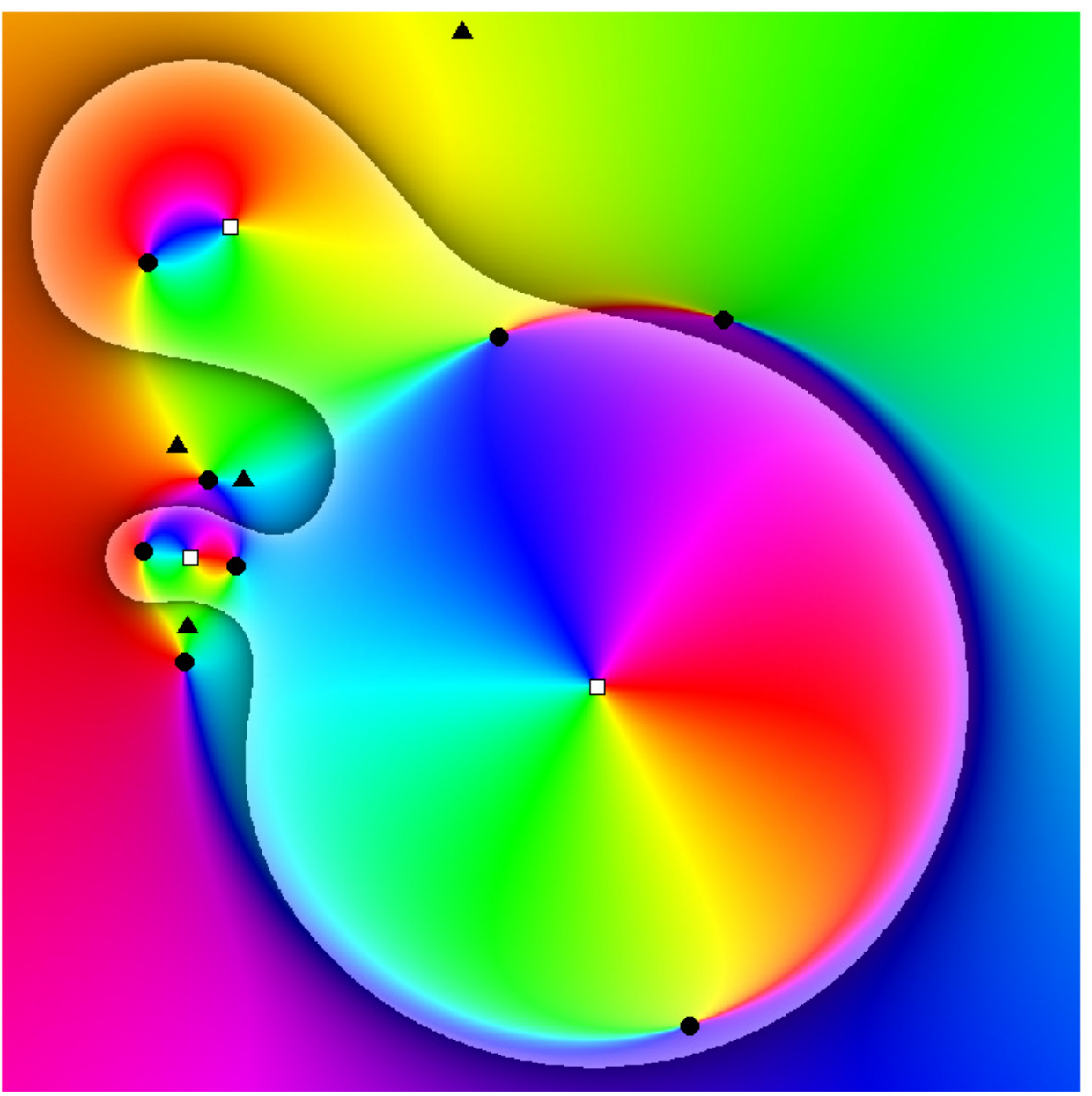}
}
\newline
\subfigure[]{%
\label{fig:randpert3}%
\includegraphics[width=.45\textwidth]{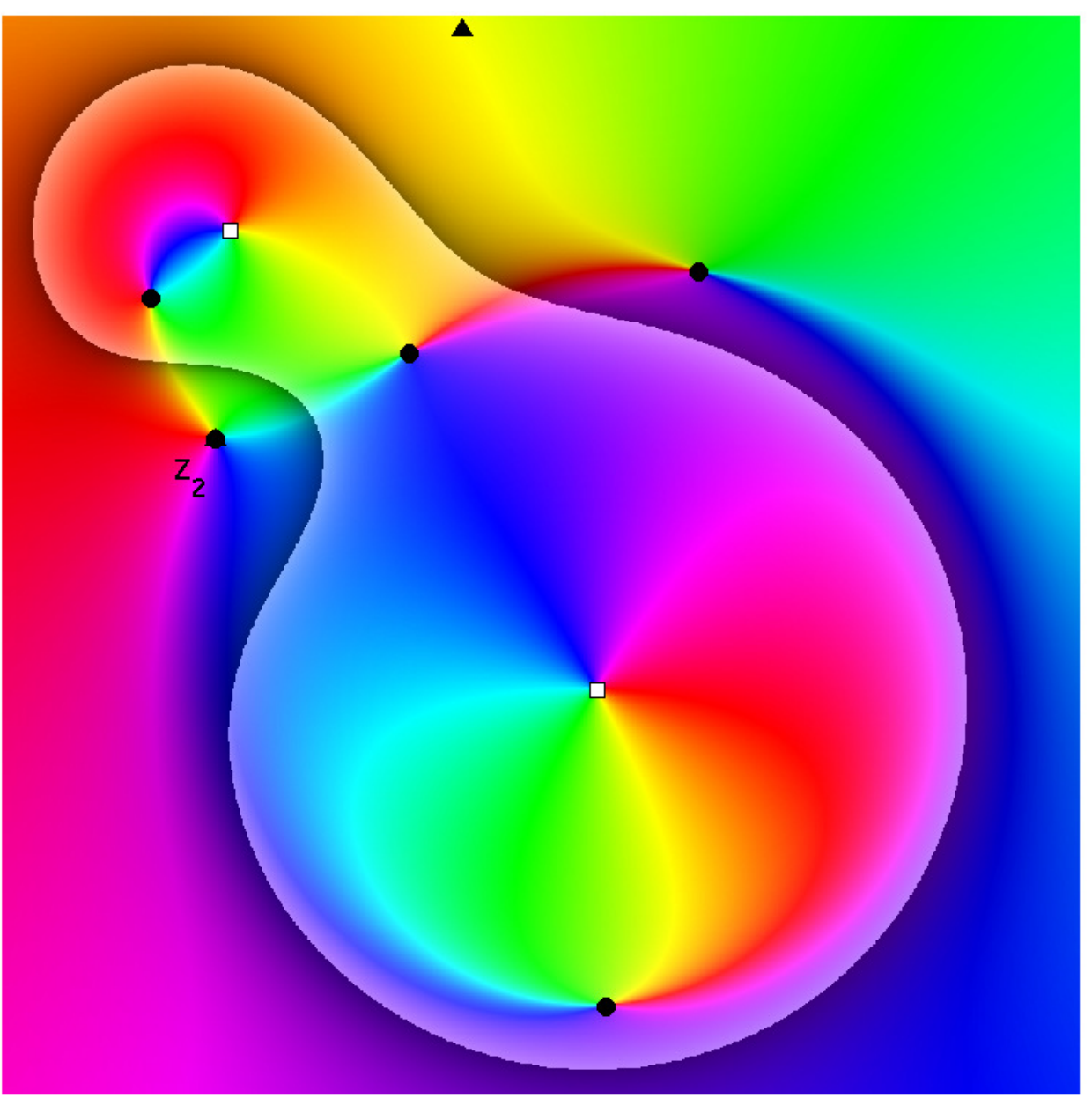}
}
\hfill
\subfigure[]{%
\label{fig:randpert4}%
\includegraphics[width=.45\textwidth]{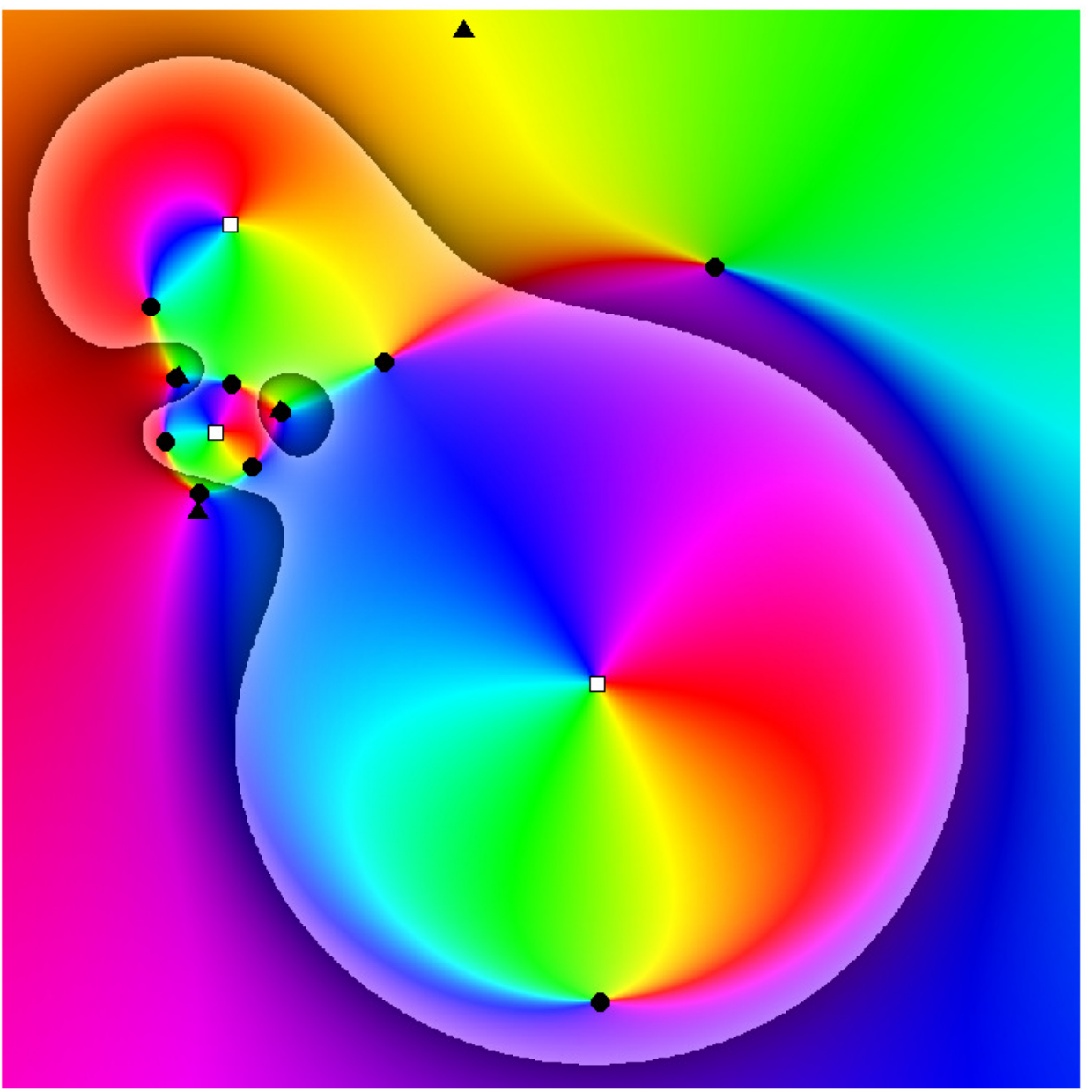}
}
\newline
\subfigure[]{%
\label{fig:randpert5}%
\includegraphics[width=.45\textwidth]{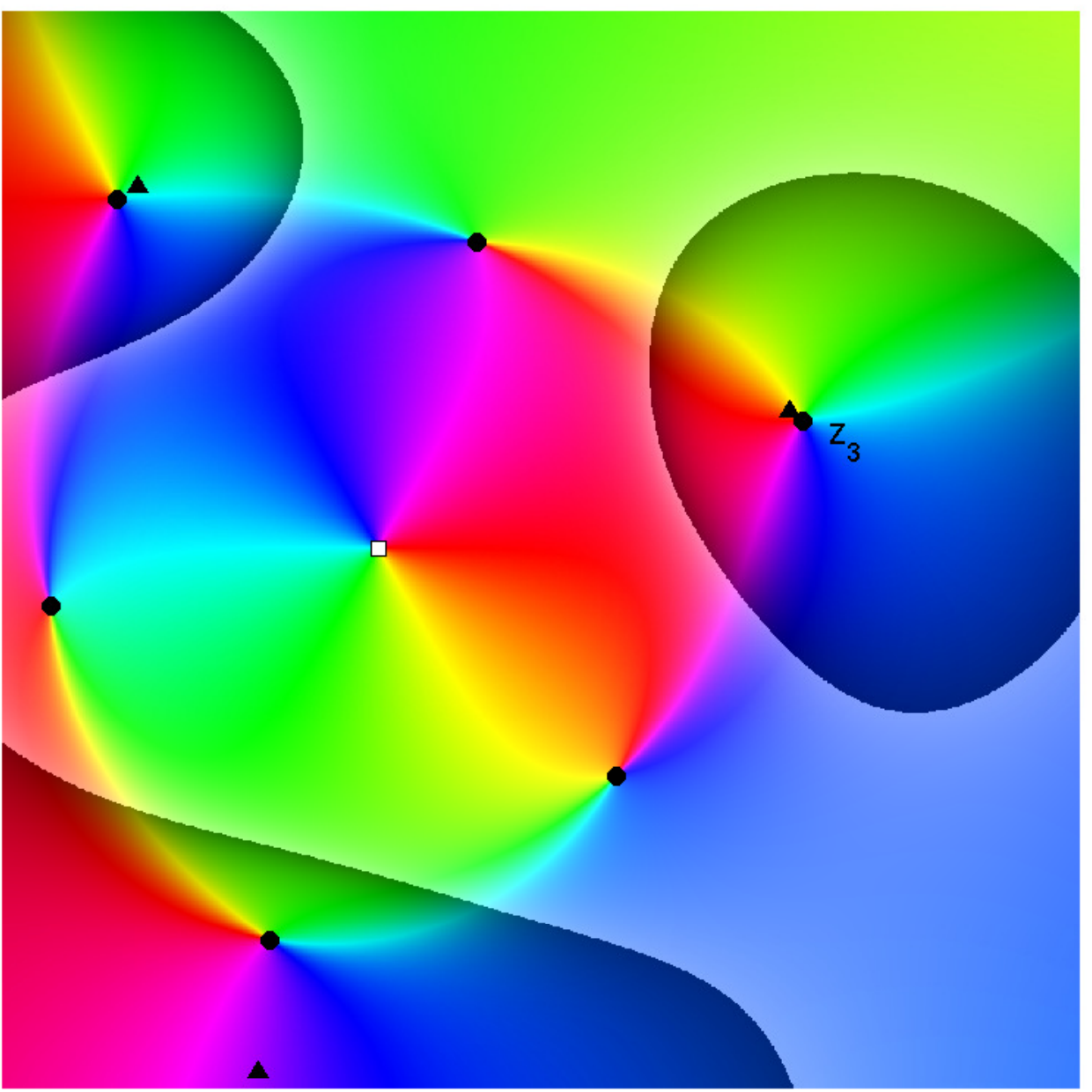}
}
\hfill
\subfigure[]{%
\label{fig:randpert6}%
\includegraphics[width=.45\textwidth]{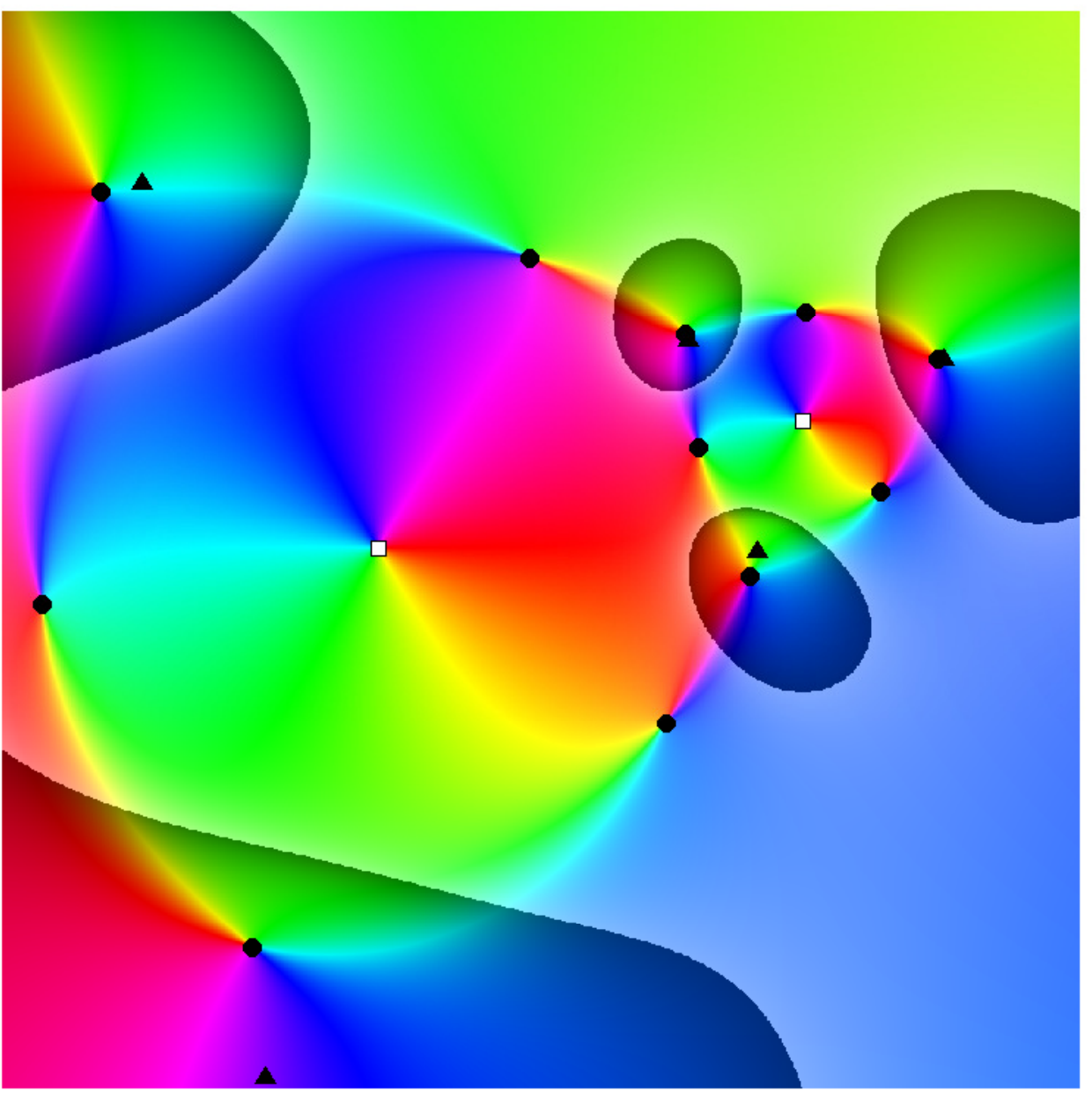}
}
\caption{Iterative perturbation of a random rational function; see
Section~\ref{sec:example_randpert}.  Black triangles indicate the zeros of
$R'(z)$.
\label{fig:randpert}}
\end{figure}

The initial situation with $R_1(z)=R(z)$ is depicted in 
Figure~\ref{fig:randpert1}, where we plot the phase portrait 
of $R_1(z) - \conj{z}$.  Black triangles show zeros of $R_1'(z)$ 
and white squares show the poles of $R_1(z)$.  The picture shows 
that $R_1(z)-\conj{z}$ has three sense-preserving zeros (black disks in 
the bright region), and two sense-reversing zeros (black disks in the dark 
region).

Clearly, neither of the two sense-reversing zeros satisfies the
conditions of Theorem~\ref{thm:pert_new_zeros}.  Denote by $z_1$ the
leftmost sense-reversing zero of $R_1(z) - \conj{z}$; we
have $\abs{R_1'(z_1)} \approx 0.5572$.  Numerical experiments
show that perturbing $R_1(z)-\conj{z}$ with the pole 
$\frac{\eps_1}{z - z_1}$ results for all $\eps_1 > 0$ small enough in
a function that has four additional zeros nearby $z_1$. The result of 
such a perturbation is shown in Figure~\ref{fig:randpert2}.
The four additional zeros are explained by Theorem~\ref{thm:other_z0}, 
and reducing $\eps_1$ further does not result in further zeros
in our numerical experiments.

Figure~\ref{fig:randpert3} shows the phase portrait of the function
$R_2(z) - \conj{z}$, where $R_2(z) = R_1(z) + c$ with a deliberately
chosen constant $c \in \C$ such that $R_2(z) - \conj{z}$ has a
sense-reversing zero $z_2$ that coincides (numerically) with a zero of
$R_2'(z) = R_1'(z)$ (leftmost zero of $R_2'(z)$).

The effect of adding a pole at $z_2$ to $R_2(z)$, i.e., forming
$R_3(z) = R_2(z) + \frac{\eps_2}{z-z_2}$, is shown in the phase
portrait of $R_3(z) - \conj{z}$ in Figure~\ref{fig:randpert4}.  An
enlarged view on the perturbed region is given in
Figure~\ref{fig:randpert5}.  We see that $R_3(z) - \conj{z}$ has $5$
more zeros than $R_2(z) - \conj{z}$.  Here $\eps_2 = 4.5 \cdot
10^{-3}$, and we have chosen this value because a newly created zero
of $R_3(z) - \conj{z}$ lies in the immediate vicinity of a zero of
$R_3'(z)$ (rightmost zero in Figure~\ref{fig:randpert5}).

In order to obtain conditions that allow another application of
Theorem~\ref{thm:pert_new_zeros}, we could have tried to add another
constant to $R_3(z)$ such that the conditions of the theorem are met.
However, the rightmost zero of $R_3(z) - \conj{z}$, say, $z_3$, is
already very close to a zero of $R_3'(z)$.  Figure~\ref{fig:randpert6}
shows the effect of adding a pole at this position, i.e. setting
$R_4(z) = R_3(z) + \frac{\eps_3}{z - z_3}$ and considering the
function $R_4(z) - \conj{z}$.  Although the conditions of the theorem
are not exactly met (we have $\abs{R_3'(z_3)} \approx 0.7936$), the same effect
as in
Figures~\ref{fig:randpert3}--\ref{fig:randpert4} can be observed.  In
particular, $R_4(z)$ is again extremal. 

Notice also that it has been possible to choose the value $\eps_3$
such that two newly created zeros of $R_4(z) - \conj{z}$ are very
close to newly created zeros of $R_4'(z)$, suggesting that it may be
possible to repeat this perturbation procedure at least one more time.

\subsection{Extension to complex residues}
\label{sec:complex_res}

Our motivation for studying the effect of adding poles to $R(z) -
\conj{z}$ comes from an astrophysical application in gravitational
microlensing.  This astrophysical application requires adding simple
poles with \emph{positive} residues.  From a mathematical point of
view, one may wonder about the effect of adding poles with
\emph{complex} residues on the number of (newly created) zeros.  We
will now give a brief and informal discussion of this case.

\begin{figure}[t]
\subfigure[$\theta = 0$ (positive perturbation)]{%
\label{fig:theta=0}%
\includegraphics[width=.49\textwidth]{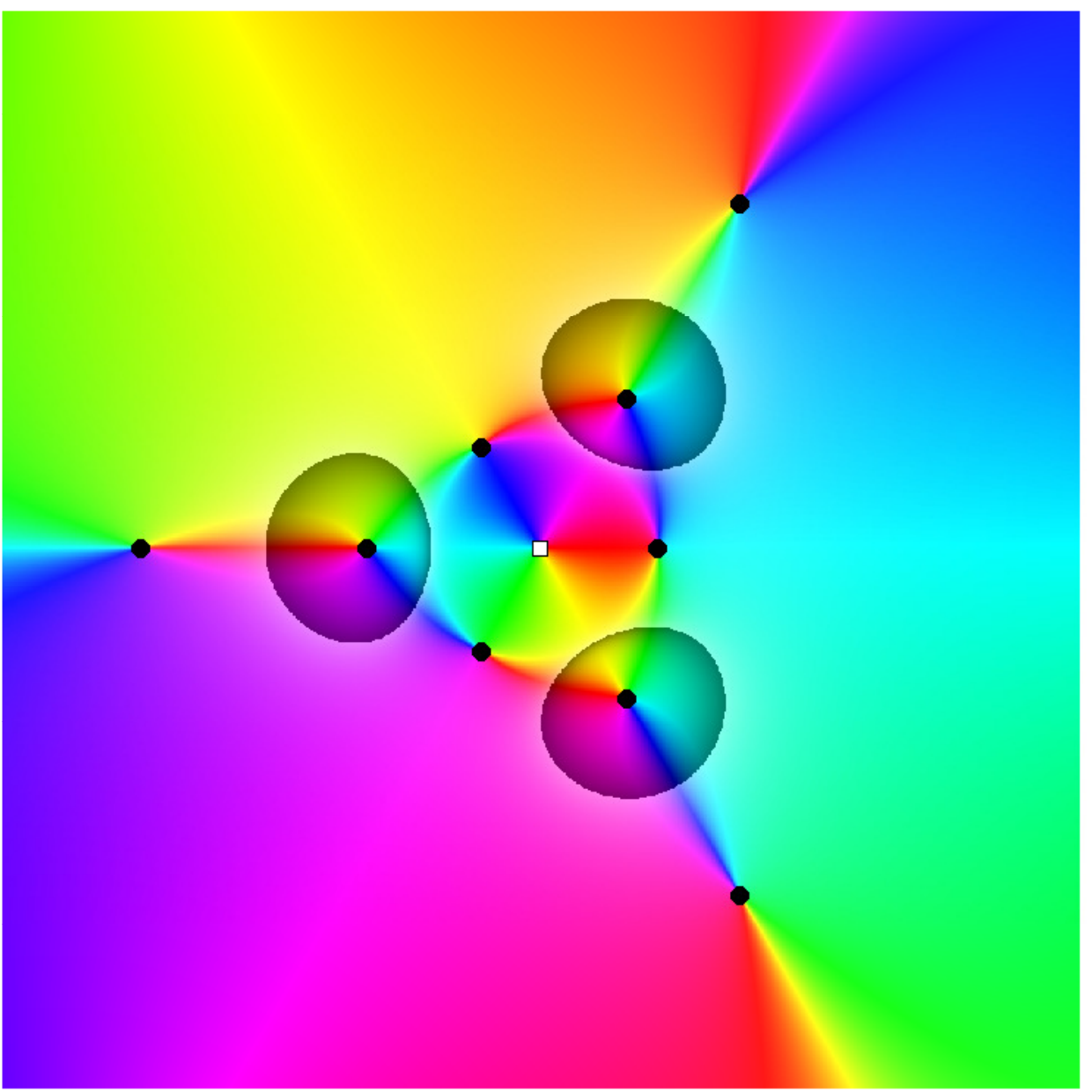}
}
\hfill
\subfigure[$\theta = 0.1 \pi$]{%
\label{fig:theta=0.1}%
\includegraphics[width=.49\textwidth]{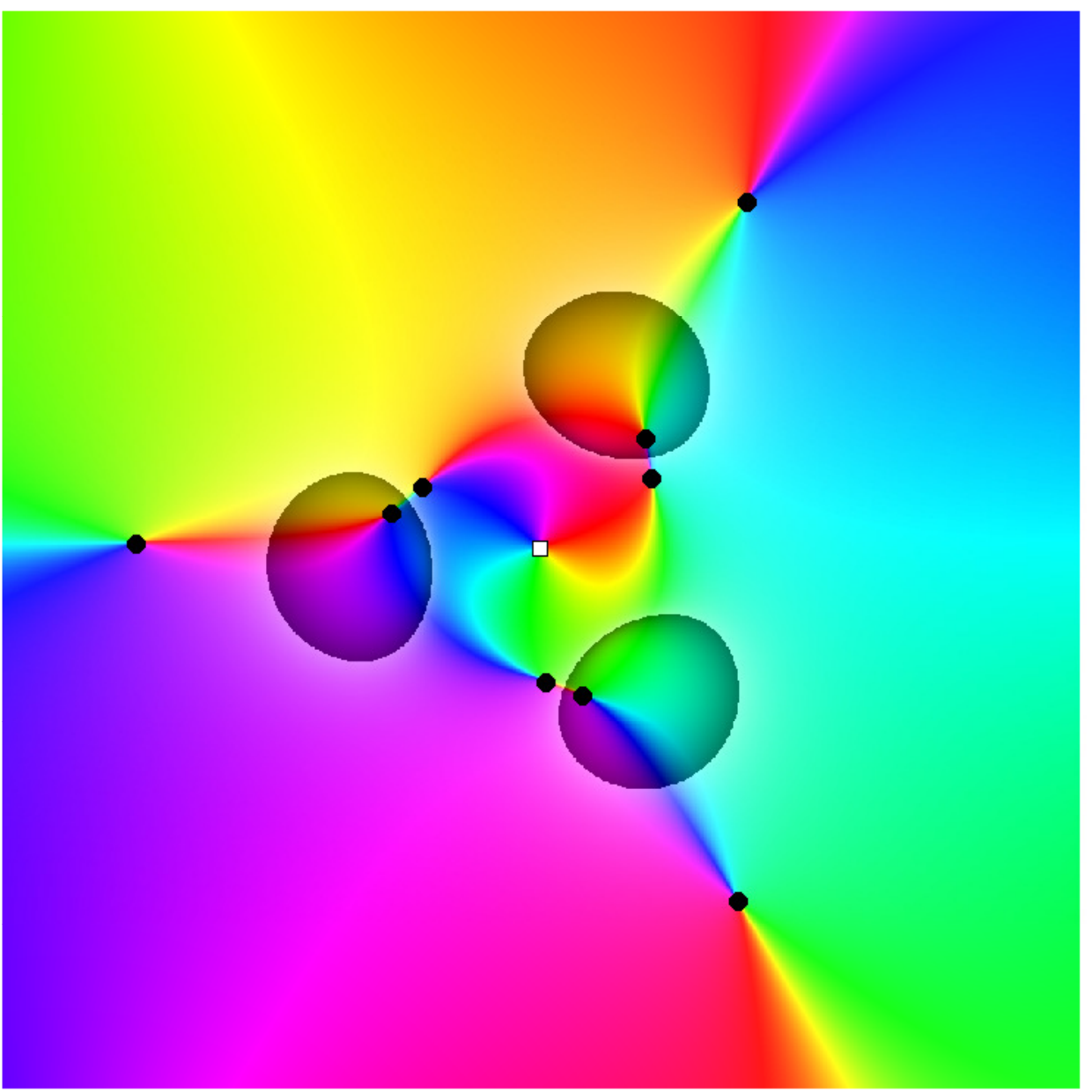}
}
\newline
\subfigure[$\theta = 0.2 \pi$]{%
\label{fig:theta=0.2}%
\includegraphics[width=.49\textwidth]{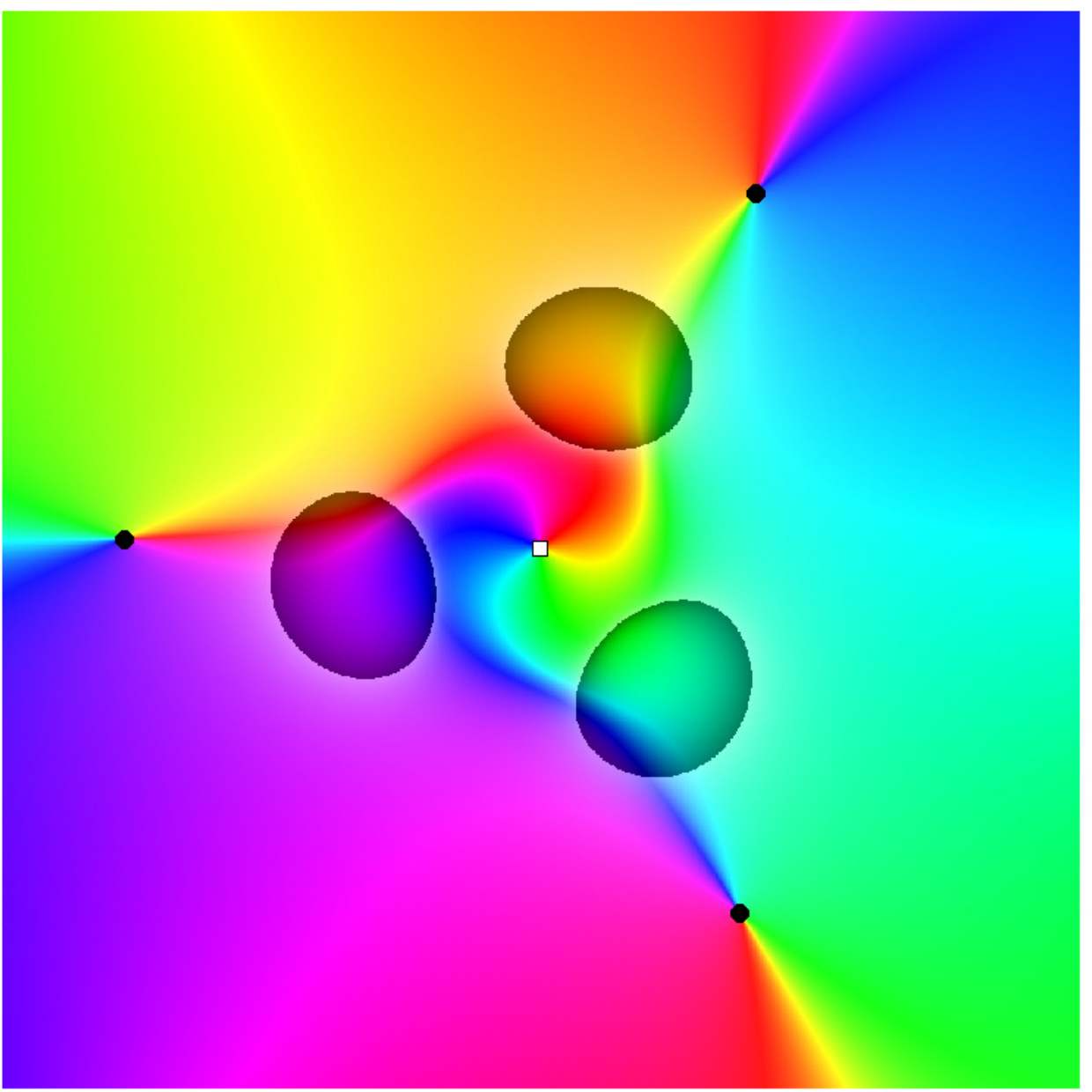}
}
\hfill
\subfigure[$\theta = \pi$ (negative perturbation)]{%
\label{fig:theta=1}%
\includegraphics[width=.49\textwidth]{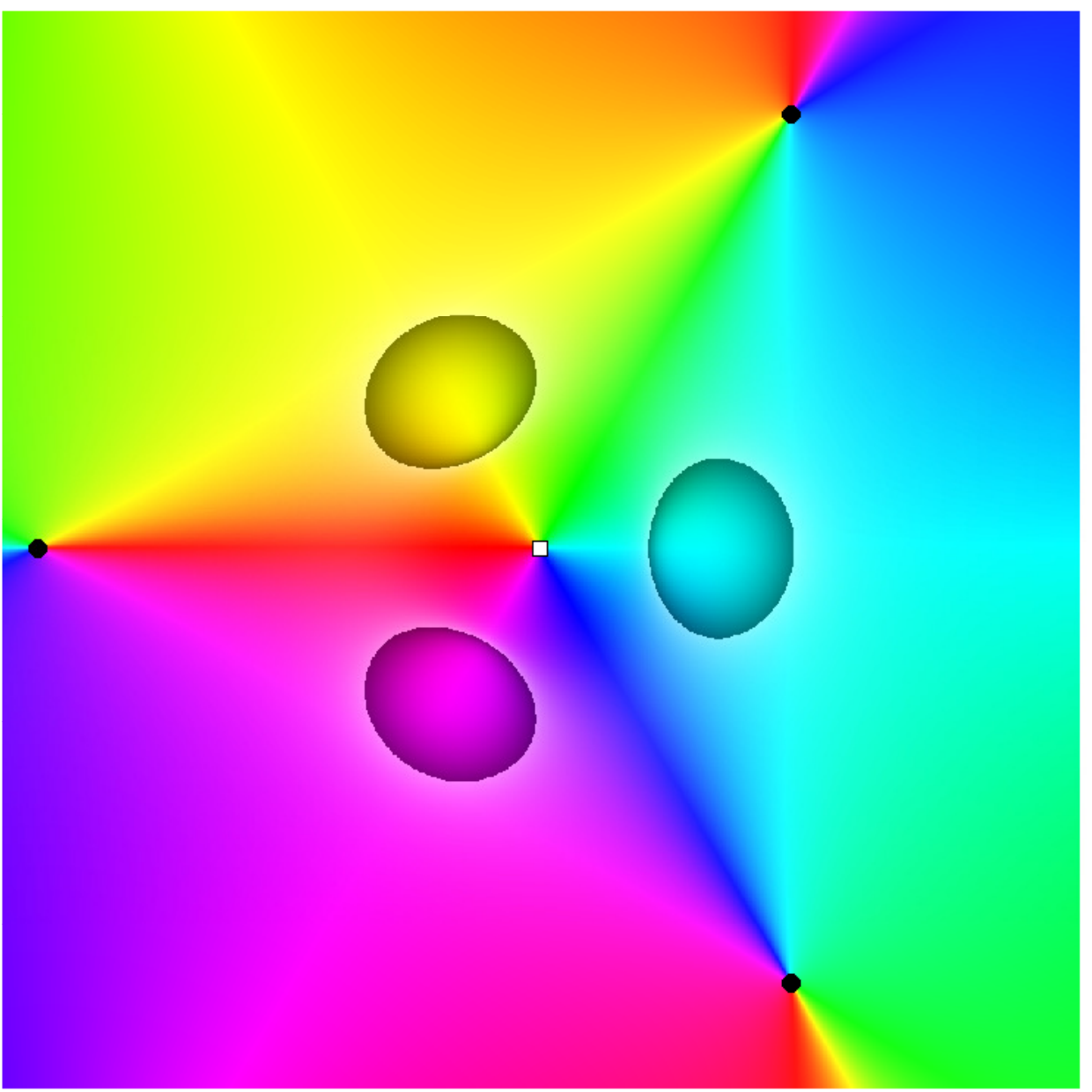}
}
\caption{Phase portraits of Rhie's function $R_\eps(z)$ from~\eqref{eqn:R_eps} 
with $\eps > 0$ replaced by $\eps e^{i \theta}$; see 
Section~\ref{sec:complex_res}.}
\label{fig:complex_res}
\end{figure}

Consider Rhie's function from~\eqref{eqn:R_eps}, where we now replace
$\eps$ by $\eps e^{i \theta}$, with $\eps > 0$ and $\theta \in \R$.
Figure~\ref{fig:complex_res} shows phase portraits of $R_{\eps e^{i
\theta}}(z)$ centered at the origin for $n = 3$ and several values of
$\theta$.  For positive perturbation ($\theta = 0$) the function
$R_\eps(z) - \conj{z}$ has $2n = 6$ zeros close to the origin
(Figure~\ref{fig:theta=0}).  We observe that with increasing argument,
the zeros close to the origin approach each other
(Figure~\ref{fig:theta=0.1}) and finally vanish
(Figures~\ref{fig:theta=0.2} and~\ref{fig:theta=1}).

Heuristically this effect can be explained as follows.  Let $z_0$ be a
zero of $f(z) = R(z) - \conj{z}$, where $R(z)$ is rational with
$\deg(R) \geq 2$.  Consider the perturbed function $F(z) = f(z) +
\tfrac{\eps}{z-z_0}$.  This function can be approximated (after
translation $w \coloneq z-z_0$) by the function $G(w)$
in~\eqref{eqn:defn_G}.  The zeros of $G(w)$ close to $z_0$ give rise
to zeros of $F(z)$; see Sections~\ref{sec:near_z0}
and~\ref{sec:other_z0}.  The zeros of $G(w)$ are the solutions of the
equation
\begin{equation*}
c w^n + \eps - \abs{w}^2 = 0.
\end{equation*}
For small values of $w$, this equation is \emph{approximated} by $\eps - 
\abs{w}^2 = 0$, which has solutions only for $\eps \geq 0$.  This suggests that 
$F(z)$ has no zeros near $z_0$ for sufficiently small $\eps$ chosen ``away from 
the positive real axis''.

\subsection{Extension to poles of higher order}
\label{sec:hop}

So far we considered perturbations by simple poles.  One may wonder about the 
effects of adding poles of higher order.  We give a brief and informal 
discussion of this case.

For simplicity, let $f(z)$ be as in Theorem~\ref{thm:other_z0}.  Suppose
$z_0 \in \C$ is not a pole of $f(z)$ and consider the function $F(z) =
f(z) + \tfrac{\eps}{ (z-z_0)^k }$, where now $k \geq 1$, and assume
that $F(z)$ is regular as well.  As before, we can show that on a
sufficiently large circle $\Gamma$ we have $V(f; \Gamma) = V(F;
\Gamma)$.  Further, there exists a circle $\gamma$ centered at $z_0$,
not containing any zero or pole of $f(z)$ (except possibly $z_0$),
such that for sufficiently small $\eps$ the functions $f(z)$ and
$F(z)$ have the same number of zeros with same indices outside
$\gamma$, and that $V(f; \gamma) = V(F; \gamma)$ holds.  Let us denote
by $n_+^{\new}$ and $n_-^{\new}$ the number of sense-preserving and
sense-reversing zeros of $F(z)$ inside $\gamma$, respectively.  By the
argument principle (Theorem~\ref{thm:arg_principle}) we then have
\begin{equation*}
    \ind(z_0; f) = V(f; \gamma) = V(F; \gamma) = -k + n_+^{\new} - n_-^{\new},
\end{equation*}
or, equivalently,
$n_+^{\new} - n_-^{\new} = k + \ind(z_0; f)$.  Thus the total number
of zeros of the regular function $F(z)$ inside $\gamma$ is
\begin{equation}
\label{eqn:ho_bound}
    n_+^{\new} + n_-^{\new} = k + \ind(z_0; f) + 2 n_-^{\new}
        \geq k + \ind(z_0; f).
\end{equation}

\begin{figure}
\subfigure[Unperturbed function.]{%
\label{fig:hop_initial}%
\includegraphics[width=.49\textwidth]{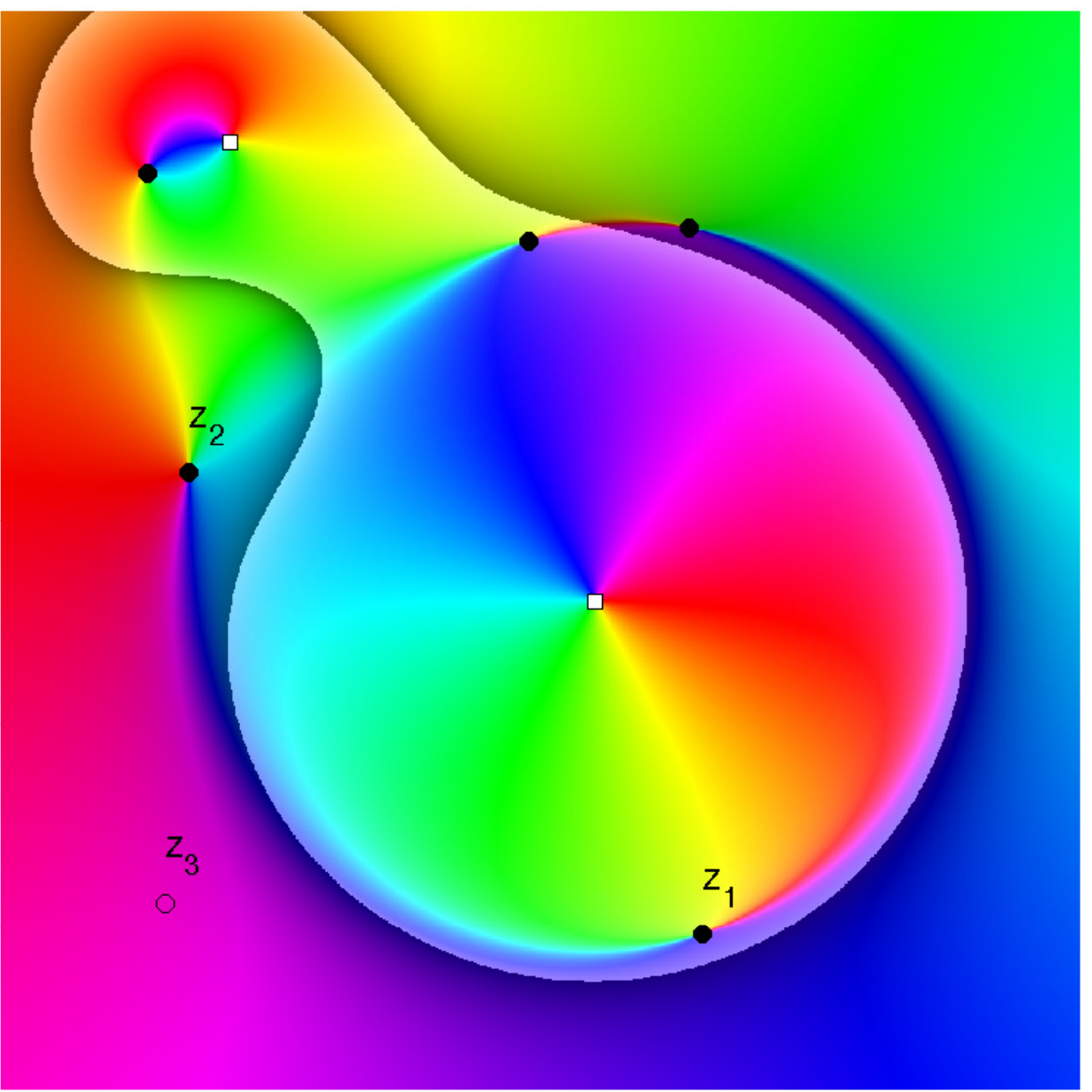}
}
\hfill
\subfigure[Perturbing a sense-preserving zero.]{%
\label{fig:hop_at_sp_zero}%
\includegraphics[width=.49\textwidth]{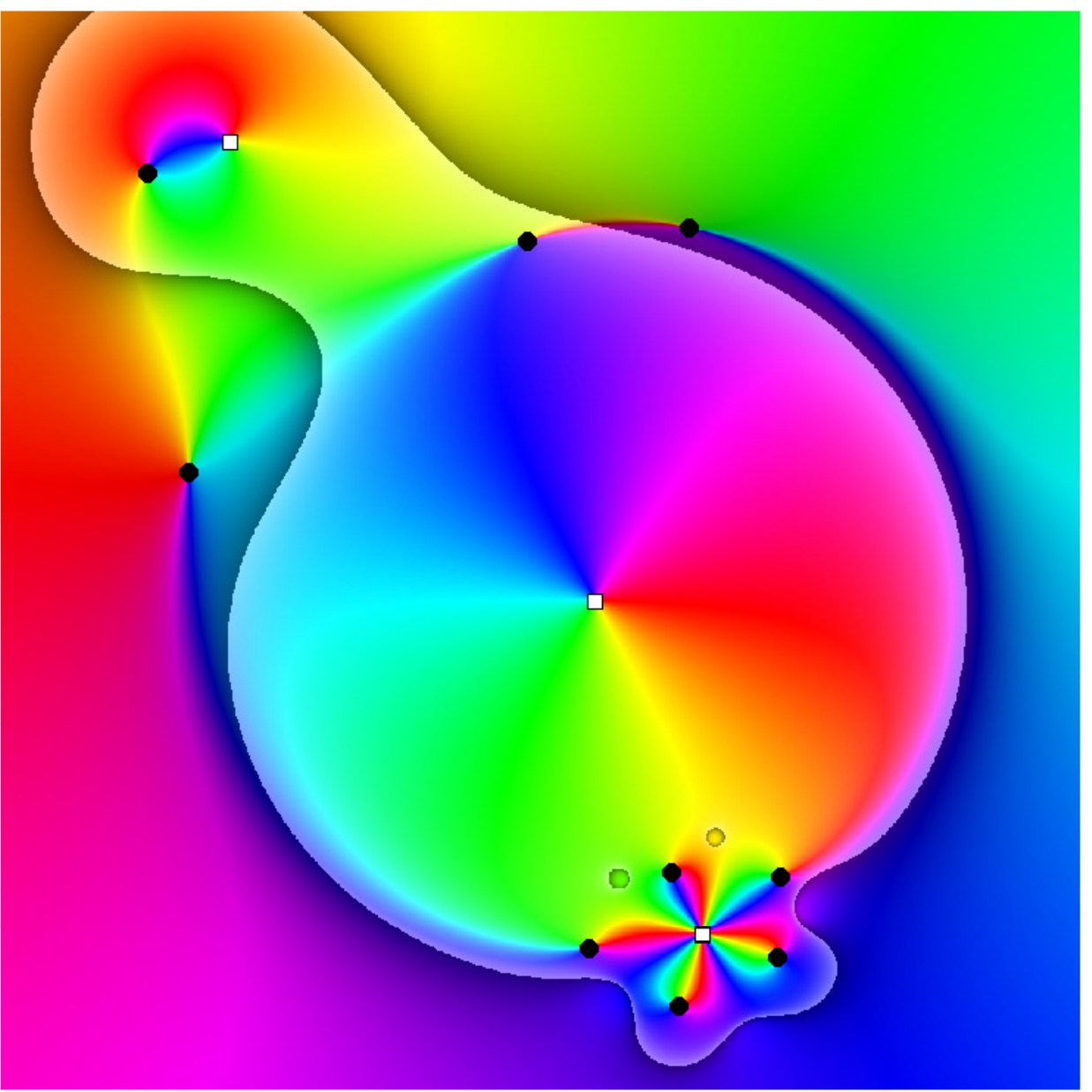}
}
\newline
\subfigure[Perturbing a sense-reversing zero.]{%
\includegraphics[width=.49\textwidth]{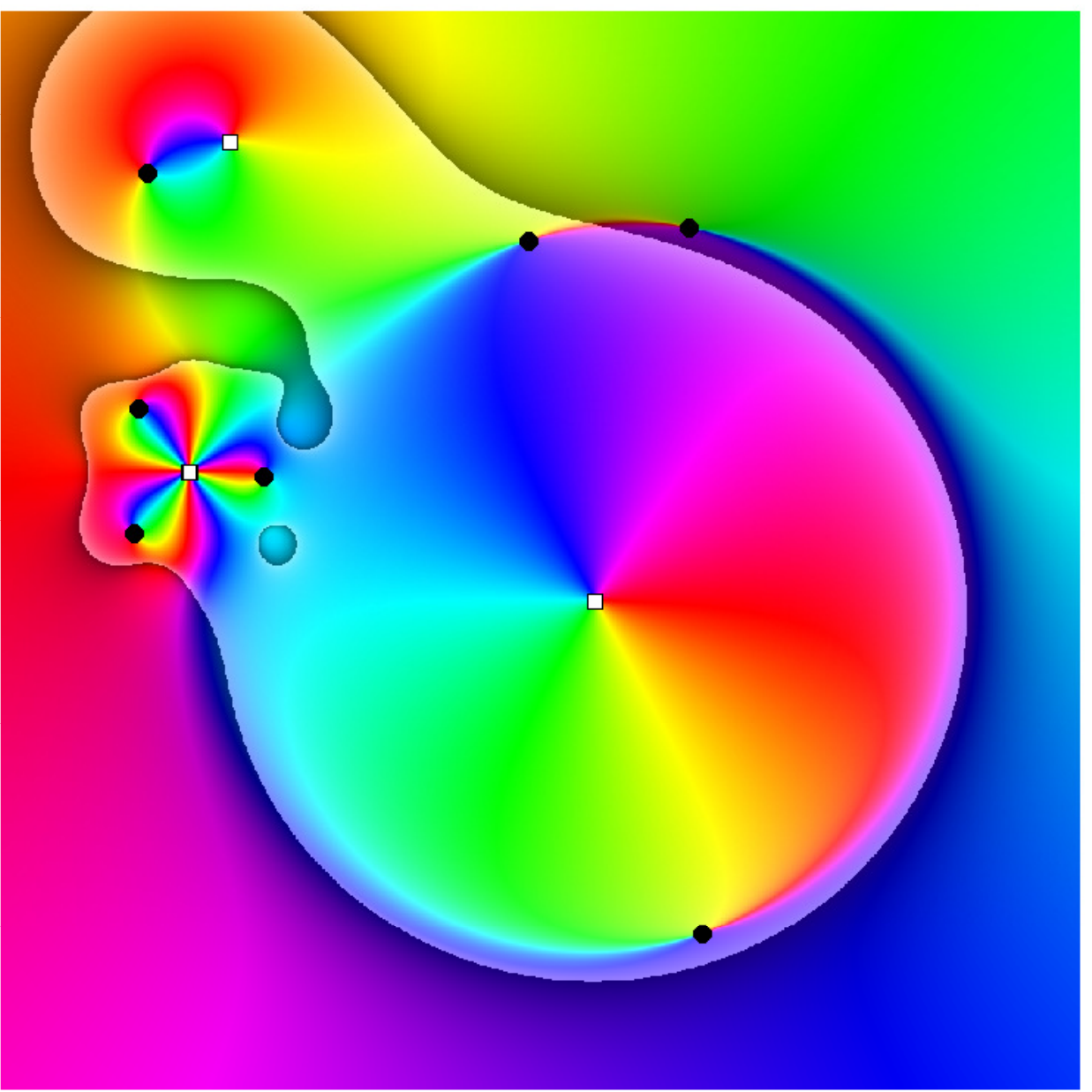}
}
\hfill
\subfigure[Perturbing a non-zero.]{%
\label{fig:hop_at_no_zero}%
\includegraphics[width=.49\textwidth]{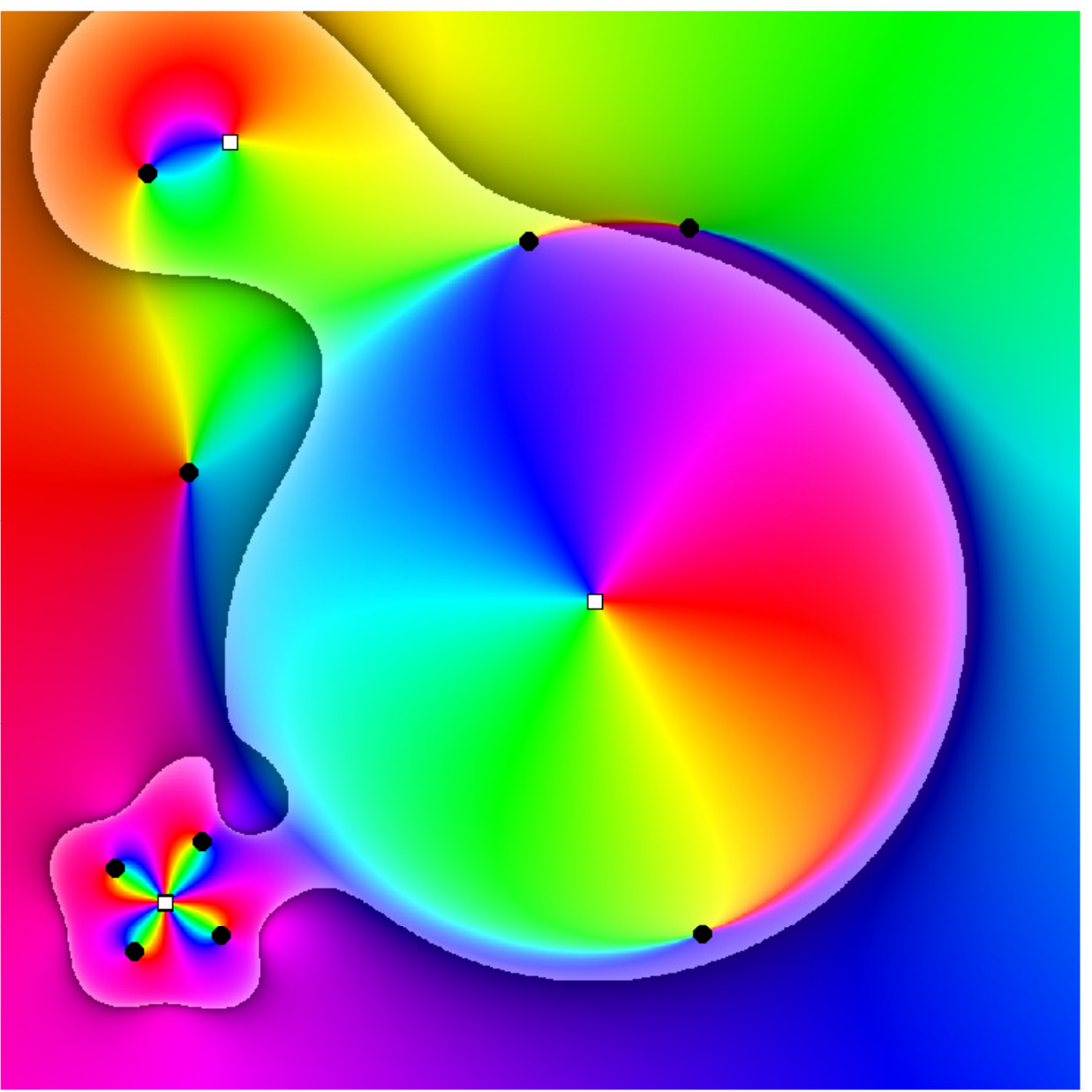}
}
\caption{Perturbations with poles of higher order; see 
Section~\ref{sec:hop}.\label{fig:hop}}
\end{figure}

We illustrate this result in Figure~\ref{fig:hop}.  In
Figure~\ref{fig:hop_initial}, the same function as used in
Section~\ref{sec:example_preserve} is shown; the indicated points
correspond to a sense-preserving zero ($\ind(z_1; f) = 1$), a
sense-reversing zero ($\ind(z_2; f)=-1)$ and a non-exceptional point
($\ind(z_3; f) = 0)$. 

Adding a pole of order $k=4$ with residue $\eps = 10^{-5}$ at the
designated positions $z_1, z_2$ and $z_3$ results in $5$, $3$, and $4$
new solutions nearby the position of the pole, which is shown in
Figures~\ref{fig:hop_at_sp_zero}--\ref{fig:hop_at_no_zero}.  In each
case, the minimum number of zeros are created (see the
bound~\eqref{eqn:ho_bound}), and $n_-^{\new} = 0$.

We performed extensive numerical experiments for various combinations
of $k$, index of $z_0$ and the number of vanishing derivatives at
$z_0$.  In each case, if $\eps$ was chosen sufficiently small, the
minimum number of zeros was created and $n_-^{\new}=0$.  Further, the
effect of complex residues $\eps$ we observed was significantly
different from the situation in Section~\ref{sec:complex_res}; the
created zeros are merely rotated according to the value of
$\arg(\eps)$.


\section{Conclusions and Outlook} \label{sec:conclusions}

We have generalized Rhie's technique~\cite{Rhie2003} for
constructing extremal point lenses to arbitrary rational
harmonic functions of the form $f(z) = R(z) - \conj{z}$.  We have
shown that if $f(z_0) = 0$ and the first $n-2$ derivatives of $R(z)$
vanish at $z_0$, while $R^{(n-1)}(z_0)\neq 0$,
then the function $f(z) + \frac{\eps}{z - z_0}$ has
at least $2n$ zeros near the circle $\abs{z - z_0} = \sqrt{\eps}$,
provided $\eps > 0$ is sufficiently small.  In particular, our general results 
cover Rhie's original construction; see 
Corollary~\ref{cor:produce_extremal_function}.
We also gave lower bounds on the number of additional zeros if $f(z)$ is
perturbed by a pole at any other point $z_0 \in \C$.

We have briefly studied extensions of our results to complex residues
(Section~\ref{sec:complex_res}) and poles of higher order
(Section~\ref{sec:hop}) by means of examples, but we did not pursue
these directions with full rigor.

Theorems~\ref{thm:pert_new_zeros} and~\ref{thm:other_z0} give
\emph{sufficient conditions} for \emph{lower bounds} on the number of
newly created zeros in the vicinity of the point where the pole is
added, which raises two open questions.  Firstly, the examples in
Section~\ref{sec:example_smallder} demonstrate that the stated
conditions are not \emph{necessary} for the creation of the stated
number of zeros.  They show that more images can be created if the
perturbation residue $\eps$ is chosen somewhat ``too large''.  It would be
interesting to quantify this effect.  Secondly, we believe that for
sufficiently small perturbations, no more than the stated number of
zeros are created; see also Remark~\ref{rem:not_sharp}.  We have
performed extensive numerical experiments that support this claim.
However, a rigorous analysis is yet to be done.

\paragraph*{Acknowledgements}
We would like to thank Elias Wegert for comments on the manuscript and
creating the color scheme used in the illustrations.  We are grateful
to the anonymous referee for carefully reading the manuscript and for
giving us many useful suggestions.

The work of R. Luce was supported by Deutsche Forschungsgemeinschaft,
Cluster of Excellence ``UniCat''.

\bibliographystyle{plain}
\bibliography{perturb}

\begin{thebibliography}{10}

\bibitem{Balk1991}
Mark~Benevich Balk.
\newblock {\em {Polyanalytic Functions}}.
\newblock {Mathematical Research. 63. Berlin: Akademie-Verlag. 197 p. }, 1991.

\bibitem{BayerDyer2007}
Johann Bayer and Charles~C. Dyer.
\newblock Maximal lensing: mass constraints on point lens configurations.
\newblock {\em Gen. Relativity Gravitation}, 39(9):1413--1418, 2007.

\bibitem{BayerDyer2009}
Johann Bayer and Charles~C. Dyer.
\newblock Erratum: {M}aximal lensing: mass constraints on point lens
  configurations.
\newblock {\em Gen. Relativity Gravitation}, 41(3):669, 2009.

\bibitem{BleherEtAl2014}
Pavel~M. Bleher, Youkow Homma, Lyndon~L. Ji, and Roland K.~W. Roeder.
\newblock Counting zeros of harmonic rational functions and its application to
  gravitational lensing.
\newblock {\em International Mathematics Research Notices}, 2014(8):2245--2264,
  2014.

\bibitem{Conway1978}
John~B. Conway.
\newblock {\em Functions of one complex variable}, volume~11 of {\em Graduate
  Texts in Mathematics}.
\newblock Springer-Verlag, New York, second edition, 1978.

\bibitem{Duren2004}
Peter Duren.
\newblock {\em Harmonic mappings in the plane}, volume 156 of {\em Cambridge
  Tracts in Mathematics}.
\newblock Cambridge University Press, Cambridge, 2004.

\bibitem{DurenHengartnerLaugesen1996}
Peter Duren, Walter Hengartner, and Richard~S. Laugesen.
\newblock The argument principle for harmonic functions.
\newblock {\em Amer. Math. Monthly}, 103(5):411--415, 1996.

\bibitem{Geyer2008}
Lukas Geyer.
\newblock Sharp bounds for the valence of certain harmonic polynomials.
\newblock {\em Proc. Amer. Math. Soc.}, 136(2):549--555, 2008.

\bibitem{Glicksberg1976}
I.~Glicksberg.
\newblock A remark on {R}ouch\'e's theorem.
\newblock {\em Amer. Math. Monthly}, 83(3):186--187, 1976.

\bibitem{Henrici1974}
Peter Henrici.
\newblock {\em Applied and computational complex analysis}.
\newblock Wiley-Interscience [John Wiley \& Sons], New York, 1974.
\newblock Volume 1: Power series---integration---conformal mapping---location
  of zeros, Pure and Applied Mathematics.

\bibitem{KhavinsonNeumann:2006}
Dmitry Khavinson and Genevra Neumann.
\newblock On the number of zeros of certain rational harmonic functions.
\newblock {\em Proc. Amer. Math. Soc.}, 134(4):1077--1085 (electronic), 2006.

\bibitem{KhavinsonNeumann2008}
Dmitry Khavinson and Genevra Neumann.
\newblock From the fundamental theorem of algebra to astrophysics: a
  ``harmonious'' path.
\newblock {\em Notices Amer. Math. Soc.}, 55(6):666--675, 2008.

\bibitem{KhavinsonSwiatek2003}
Dmitry Khavinson and Grzegorz {\'S}wi{\c{a}}tek.
\newblock On the number of zeros of certain harmonic polynomials.
\newblock {\em Proc. Amer. Math. Soc.}, 131(2):409--414 (electronic), 2003.

\bibitem{LeeLerarioLundberg2013}
S.-Y. {Lee}, A.~{Lerario}, and E.~{Lundberg}.
\newblock {Remarks on Wilmshurst's theorem}.
\newblock {\em ArXiv e-prints}, August 2013.

\bibitem{Liesen2007}
J{\"o}rg Liesen.
\newblock When is the adjoint of a matrix a low degree rational function in the
  matrix?
\newblock {\em SIAM J. Matrix Anal. Appl.}, 29(4):1171--1180, 2007.

\bibitem{LuceSeteLiesen2013}
Robert Luce, Olivier S{\`e}te, and J{\"o}rg Liesen.
\newblock Sharp parameter bounds for certain maximal point lenses.
\newblock {\em Gen. Relativity Gravitation}, 46(5):46:1736, 2014.

\bibitem{MaoPettersWitt1999}
S.~{Mao}, A.~O. {Petters}, and H.~J. {Witt}.
\newblock {Properties of Point Mass Lenses on a Regular Polygon and the Problem
  of Maximum Number of Images}.
\newblock In T.~{Piran} and R.~{Ruffini}, editors, {\em Recent Developments in
  Theoretical and Experimental General Relativity, Gravitation, and
  Relativistic Field Theories}, page 1494, 1999.

\bibitem{PettersWerner2010}
A.~O. Petters and M.~C. Werner.
\newblock Mathematics of gravitational lensing: multiple imaging and
  magnification.
\newblock {\em Gen. Relativity Gravitation}, 42(9):2011--2046, 2010.

\bibitem{Rhie2003}
S.~H. {Rhie}.
\newblock {n-point Gravitational Lenses with 5(n-1) Images}.
\newblock {\em ArXiv Astrophysics e-prints}, May 2003.

\bibitem{Sheil-Small2002}
T.~Sheil-Small.
\newblock {\em Complex polynomials}, volume~75 of {\em Cambridge Studies in
  Advanced Mathematics}.
\newblock Cambridge University Press, Cambridge, 2002.

\bibitem{SuffridgeThompson2000}
T.~J. Suffridge and J.~W. Thompson.
\newblock Local behavior of harmonic mappings.
\newblock {\em Complex Variables Theory Appl.}, 41(1):63--80, 2000.

\bibitem{Wegert2012}
Elias Wegert.
\newblock {\em Visual complex functions}.
\newblock Birkh\"auser/Springer Basel AG, Basel, 2012.
\newblock An introduction with phase portraits.

\bibitem{WegertSemmler2011}
Elias {Wegert} and Gunter {Semmler}.
\newblock {Phase plots of complex functions: a journey in illustration.}
\newblock {\em {Notices Am. Math. Soc.}}, 58(6):768--780, 2011.

\bibitem{Wilmshurst1998}
A.~S. Wilmshurst.
\newblock The valence of harmonic polynomials.
\newblock {\em Proc. Amer. Math. Soc.}, 126(7):2077--2081, 1998.

\end{thebibliography}

\end{document}